\documentclass[10pt]{article}
\usepackage{amsmath,amssymb}
\usepackage{algorithm,algorithmic}
\usepackage{graphicx,subfigure}
\usepackage{cite}

\usepackage{amsthm}
\newtheorem{theorem}{Theorem}
\newtheorem{lemma}{Lemma}

\newtheorem{assumption}{Assumption}
\newtheorem{remark}{Remark}

\usepackage[capitalize,noabbrev]{cleveref}

\newcommand{\R}{\mathbb{R}}
\newcommand{\C}{\mathbb{C}}
\newcommand{\mc}[1]{\mathcal{#1}}
\newcommand{\mb}[1]{\mathbb{#1}}

\newcommand{\Ah}{\widehat{A}}
\newcommand{\Fh}{\widehat{F}}
\newcommand{\Uh}{\widehat{U}}
\newcommand{\Vh}{\widehat{V}}
\newcommand{\Sh}{\widehat{\Sigma}}

\newcommand{\Zh}{\widehat{Z}}
\newcommand{\uh}{\widehat{u}}
\newcommand{\vh}{\widehat{v}}

\newcommand{\define}{{\equiv}}
\newcommand{\rank}{\mathsf{rank}\,}
\newcommand{\trace}{\mathsf{trace}\,}

\newcommand{\norm}[2]{\left\lVert #1\right\rVert_{#2}}
\newcommand{\normf}[1]{\left\lVert #1\right\rVert_F}
\newcommand{\normtwo}[1]{\left\lVert #1\right\rVert_2}
\newcommand{\schattenp}[1]{{\left\vert\kern-0.25ex\left\vert\kern-0.25ex\left\vert #1 
    \right\vert\kern-0.25ex\right\vert\kern-0.25ex\right\vert}_p}
\newcommand{\uninorm}[1]{{\left\vert\kern-0.25ex\left\vert\kern-0.25ex\left\vert #1 
    \right\vert\kern-0.25ex\right\vert\kern-0.25ex\right\vert}}

\newcommand{\range}{\mc{R}\,}
\newcommand{\diag}{\mathsf{diag}\,}
\newcommand{\proj}[1]{\mathcal{P}_{#1}}

\newcommand{\bmat}[1]{\begin{bmatrix} #1 \end{bmatrix}}


\Crefname{prop}{Proposition}{Propositions}
\Crefname{assumption}{Assumption}{Assumptions}

\usepackage[top=1in, bottom=1in, left=1.25in, right=1.25in]{geometry}

\title{Randomized subspace iteration: Analysis of canonical angles and unitarily invariant norms}
\author{Arvind K. Saibaba\thanks{Department of Mathematics, North Carolina State University asaibab@ncsu.edu. This work was funded, in part, by NSF DMS 1720398, OP: Collaborative Research: Novel Feature-Based, Randomized Methods for Large-Scale Inversion}}
\begin{document}
\maketitle

\begin{abstract}
This paper analyzes the randomized subspace iteration for the computation of low-rank approximations. We present three different kinds of bounds. First, we derive both bounds for the canonical angles between the exact and the approximate singular subspaces. Second, we derive bounds for the low-rank approximation in any unitarily invariant norm (including the Schatten-p norm). This generalizes the bounds for Spectral and Frobenius norms found in the literature. Third, we present bounds for the accuracy of the singular values. The bounds are structural in that they are applicable to any starting guess, be it random or deterministic, that satisfies some minimal assumptions. Specialized bounds are provided when a Gaussian random matrix is used as the starting guess. Numerical experiments demonstrate the effectiveness of the proposed bounds. 
\end{abstract}


\section{Introduction}
The computation of low-rank approximations of large-scale matrices is a vital step in many applications in data analysis and scientific computing. These applications include principal component analysis, facial recognition, spectral clustering, model reduction techniques such as proper orthogonal decomposition (POD) and discrete empirical interpolation method (DEIM), approximation algorithms for partial differential  and integral equations. The celebrated Eckart-Young theorem~\cite{GovL13} says that the optimal low-rank approximation can be obtained by means of the Singular Value Decomposition (SVD); however, computing the full or truncated SVD can be computationally challenging, or even prohibitively expensive for many applications of interest.

{
Randomized algorithms for computing low-rank approximations have become increasingly popular in the last two decades. For example, see the survey papers~\cite{HMT09,mahoney2011randomized}. Randomized methods have gained in popularity since they are easy to implement, computationally efficient, and numerically robust. Although randomized algorithms tend to have the same asymptotic cost compared to classical methods, they have several advantages that make them suitable for large-scale computing. Specifically,  for  datasets that are too large to fit in memory, randomized algorithms are able to exploit parallel computing efficiently and are efficient in the number of times they access the data.  Randomized algorithms also have excellent numerical robustness and are very reliable in practical applications.

We focus on a specific randomized algorithm known as {\em randomized subspace iteration}. The main idea of this method is to use random sampling to identify a subspace that approximately captures the range of the matrix. A low-rank approximation to the matrix is then obtained by projecting the matrix onto this subspace. A post-processing step is then performed to compress the low-rank representation to achieve a desired target rank, and a conversion step to obtain an equivalent representation in the desired format (typically, a truncated SVD representation)---both these steps are deterministic. }

Many advances have been made in the analysis of randomized algorithms for low-rank approximations. The analysis typically has two stages: a \textit{structural, or deterministic stage}, in which minimal assumption about the distribution of the random matrix is made, and a \textit{probabilistic stage}, in which the distribution of the random matrix is taken into account to derive bounds for expected and tail bounds of the error distribution. As mentioned earlier, existing literature only targets the error in the low-rank representation~\cite{gu2015subspace,HMT09}. When the low-rank representation is in the SVD format, it is desirable to understand the quality of the approximate subspaces and the individual singular triplets. This paper aims to fill in some of the  missing gaps in the literature by a rigorous analysis of the accuracy of approximate singular values, vectors and subspaces obtained using randomized subspace iteration. This analysis will be beneficial in applications where an analysis beyond the low-rank approximation is desired. Examples include Model Reduction techniques~\cite{erichson2017randomized,balabanov2018randomized}, Leverage Score computation~\cite{holodnak2015conditioning}, Spectral Clustering~\cite{boutsidis2015spectral}, FEAST eigensolvers~\cite{peter2014feast}, Canonical Correlation Analysis~\cite{avron2013efficient}.

\subsection{Contributions and overview of paper} 
We survey the contents and the main contributions of this paper.

\textbf{Canonical angles.} We have developed bounds for \textit{all} the canonical angles between the spaces spanned by the exact and the approximate singular vectors. Several different flavors of bounds are provided: 
\begin{enumerate}
\item The bounds in \cref{ssec:angles} relate the canonical angles between the exact and the approximate singular subspaces. Analysis is also provided for unitarily invariant norms of the canonical angles.
\item In applications where lower dimensional subspaces are  extracted from the approximate singular subspaces, the bounds in \cref{ssec:sintheta} quantifies the accuracy in the extraction process. 
\item \cref{ssec:sintheta} also presents bounds for the angles between the individual exact and approximate singular vectors, extracted from the appropriate subspaces.
\end{enumerate}
Our bounds suggest that the accuracy of the singular values and vectors, in addition to the low-rank approximations, is high provided (1) singular values decay rapidly beyond the target rank $k$, and (2) the larger the singular value gaps, the higher is the accuracy to be expected. Furthermore, the truncation step to extract the $k$ dimensional subspaces does not significantly lower the accuracy of the subspaces.

\textbf{Low-rank approximation.} This paper provides the first known analysis of the randomized subspace iteration for an arbitrary unitarily invariant norm, with stronger, specialized results for Schatten-p norms.  Bounds for the special cases of the Schatten-p norm, namely the spectral and Frobenius norms, have already appeared in the literature---our result for the Schatten-p norm recovers these results as special cases.

\textbf{Singular values.} We derive upper and lower bounds on the approximate singular values obtained by the randomized subspace iteration. Similar bounds also appear in~\cite{gu2015subspace}; however, our proof technique is different. We also present Hoffman-Wielandt type bounds for the accuracy of the singular values. 

The conclusion of the bounds for the low-rank approximations and the singular values are similar to those of the conclusions for the canonical angles. 

\textbf{Generalization of sin theta theorem} The sin theta theorem~\cite{wedin1983angles} is a well known result in numerical analysis and relates the canonical angles between the true and approximate singular subspaces in the unitarily invariant norms. We derive a generalization of the sin theta theorem that derives bounds for the individual canonical angles between the two subspaces. The sin theta theorem is recovered as a special case. This result {maybe} of independent interest beyond the study of randomized algorithms.

\section{Background and preliminaries}
\subsection{Notation}\label{ssec:prelim}

{
Denote the target rank by $k$ and let $1 \leq k \leq \rank(A)$. Let the matrix $A\in \C^{m\times n}$, have the SVD 
\[A = \bmat{U_k & U_{\perp}}\bmat{\Sigma_k & \\ &\Sigma_{\perp} }\bmat{ V_k^* \\ V_\perp^*}. \]
Here, $\Sigma_k \in \C^{k\times k}$ and $\Sigma_{\perp} \in \C^{(m-k) \times (n-k)}$; the columns of $U_k$ and $U_{\perp}$ are the corresponding left singular vectors, and columns of $V_k$ and $V_{\perp}$ are the corresponding right singular vectors.  We also denote by $A_k = U_k\Sigma_kV_k^*$ as the best rank$-k$ approximation to the matrix $A$, in any unitarily invariant norm (for a definition, see below). We also define $A_\perp = U_\perp \Sigma_\perp V_\perp^*$ and observe that 
\[ A = A_k + A_\perp .   \]
  }

 \paragraph{Singular values and ratios} Let $\normtwo{\cdot}$ denote the spectral norm, so that $\normtwo{\Sigma_{\perp}} = \sigma_{k+1}$ and $\normtwo{\Sigma_k^{-1}} = \frac{1}{\sigma_k}$. The singular values of $A$ can be arranged in decreasing order as 
 \[ \sigma_1 \geq \sigma_2 \geq \dots \geq \sigma_k \geq \sigma_{k+1} \geq \dots \geq \sigma_n.\]
 For later use, we define the singular value ratios
 \begin{equation}
\gamma_j = \frac{\sigma_{k+1}}{\sigma_j} \qquad j=1,\dots,k.
\end{equation}
Since the singular values are monotonically decreasing, the singular value ratios are monotonically increasing, i.e.,  $\gamma_1 \leq \dots \leq \gamma_k \leq 1$.

 \paragraph{Norms} We have already defined the spectral norm. The Frobenius norm of a matrix is $\normf{A} = \sqrt{\trace(A^*A)}$. We use the symbol $\uninorm{\cdot}$ to denote any unitarily invariant norm, i.e., a norm that satisfies $\uninorm{QAZ} = \uninorm{A}$ for unitary matrices $Q,Z$. An example of the unitarily invariant norms is Schatten-p class of norms, defined as the vector $\ell_p$ norm of the singular values of $A$, i.e.,
\[ \schattenp{A} = \left( \sum_{j=1}^{\min\{m,n\}}\sigma_j^{p}\right)^{1/p}.\]
With this definition, it can be readily seen that $\normtwo{A}=  \uninorm{A}_{\infty}$ and $\normf{A} = \uninorm{A}_{2}$. Another example is the Ky-Fan-$k$ class of norms defined $\|A \|_{(k)} = \sum_{j=1}^k \sigma_j$ for every $k=1,\dots,\min\{m,n\}$. Associated with every unitarily invariant norm is a symmetric gauge function acting on the singular values of the matrix that it acts on.

 \paragraph{Projection matrices} Suppose the matrix $Z$ has full column rank with column space $\range(Z)$; $Z^\dagger$ is a left multiplicative inverse and where ${}^\dagger$ represents the Moore-Penrose inverse. We define the (orthogonal) projection matrix $\proj{Z} = ZZ^\dagger$. {An orthogonal projection matrix is uniquely defined by its range, and $\range(\proj{Z}) = \range(Z)$.} For a matrix $Q$ with orthonormal columns, the formula simplifies and $\proj{Q} = QQ^*$. 
 
\paragraph{Canonical angles} The separation between subspaces can be measured by the principal or canonical angles. Let $\mc{M}$ and $\mc{N}$ be two subspaces of $\mb{C}^n$, such that $\dim \mc{M} = \ell$, $\dim\mc{N} = k$ and $\ell\geq k$. Then the principal angles between the subspaces $\mc{M}$ and $\mc{N}$ are recursively defined to be the numbers $0 \leq \theta_i \leq \pi/2$ such that
\[ \cos\theta_i = \max_{u \in \mc{M}, v \in \mc{N} \\ \normtwo{u} = \normtwo{v} = 1} v^*u = v_i^*u_i, \qquad i = 1,\dots,k\]
subject to the constraints $\normtwo{u_i} = \normtwo{v_i} = 1$,  and 
\[u_j^*u, \quad v_j^*v = 0,  \qquad j=1,\dots,i-1.\] 
The canonical angles are arranged in increasing order as 
\[ 0 \leq \theta_1 \leq \dots \leq \theta_k \leq \pi/2.\] 
It can also be shown that $\sin\theta_i$ are also the singular values of $\proj{\mc{M}} - \proj{\mc{N}}$. 

We denote $\angle(\mc{M},\mc{N})$ to be the canonical angles between subspaces $\mc{M}$ and $\mc{N}$.  Let $M$ and $N$ be  matrices with orthonormal columns, which form bases for subspaces $\mc{M}$ and $\mc{N}$ respectively. Then, the singular values of  singular values of $(I-MM^*)N$ can be used to compute $\sin\angle(\mc{M},\mc{N})$ and the singular values of $M^*N$ can be used to compute $\cos\angle(\mc{M},\mc{N})$~\cite[Section 3]{bjorck1973numerical}. For ease of notation, in the rest of this paper, we write $\angle(M,N)$ instead of $\angle(\mc{M},\mc{N})$.

\subsection{Randomized subspace iteration} The basic version of the randomized subspace iteration is summarized in \cref{alg:randsvd}. Given a starting guess, denoted by $\Omega \in \C^{n\times (k+\rho)}$, the algorithm performs $q$ steps of the randomized subspace iteration to obtain the  matrix $Y$, also known as the ``sketch.'' A thin-QR factorization of $Y$ is performed to obtain $Q$ whose columns form an orthonormal basis for the range of $Y$. The main idea is that, under suitable conditions, the range of $Q$ is a good approximation for the range of $A$.  We obtain a low-rank approximation to $A$ by the projection $\Ah = QQ^*A$. The rest of the algorithm involves converting this low-rank approximation into the SVD format.
\begin{algorithm}
\begin{algorithmic}[1]
\REQUIRE Matrix $A$, Starting guess $\Omega \in \C^{n\times (k + \rho)}$, an integer $q \geq 0$.
\STATE Compute $Y = (AA^*)^qA\Omega$ 
\STATE Compute thin QR factorization of $Y$, so that $Y = QR$.
\STATE Compute $B = Q^*A$ and its SVD $B = U_{B}\Sh \Vh^*$. 
\STATE Compute $\Uh = QU_{B}$.
\RETURN Matrices $\Uh,\Sh,\Vh $ that define $\Ah \equiv \Uh\Sh\Vh^*$.
\end{algorithmic}
\caption{Idealized version of Subspace iteration for Singular Value Decomposition}
\label{alg:randsvd}
\end{algorithm}
 The algorithm to compute an approximate singular value decomposition, given starting guess $\Omega \in \mb{C}^{n\times (k+\rho)}$ is summarized in~\cref{alg:randsvd}. We say that this is  an idealized version, since the algorithm can behave poorly in the presence of round-off errors. A practical implementation of this algorithm alternates the QR factorization with matrix-vector products (matvecs) involving $A$; for more details regarding the implementation, the reader is referred to~\cite{saad2011numerical,HMT09}. In~\cref{alg:randsvd}, the output $$\Ah \equiv QQ^*A =  \Uh\Sh\Vh^*$$
may have a larger rank than (or equal to) $k$.  If a rank-$k$ approximation to $A$ is desired, then it can be obtained by discarding the $\rho$ smallest singular values of $\Ah$. We denote this low-rank representation by 
\[ \Ah_k = \Uh_k \Sh_k \Vh_k^*. \]
This is summarized in~\cref{alg:truncation}. 
 
 \begin{algorithm}[!ht]
 \begin{algorithmic}[1]
 \REQUIRE Matrix $A\in \C^{m\times n}$ and $Q \in \C^{m\times (k+\rho)}$. Target rank $1 \leq k \rank(A)$. 
 \STATE Form matrix $B = Q^*A$. 
 \STATE Compute the truncated SVD representation $B_k = \Uh_{B,k} \Sh_k \Vh_k^*$.
 \STATE Form $\Uh_k = Q\Uh_{B,k}$
 \RETURN Matrices $\Uh_k,\Sh_k,\Vh_k$  such that $\Ah_k = \Uh_k\Sh_k\Vh_k^*$.
 \end{algorithmic}
 \caption{Truncated SVD of $\Ah = QQ^*A$}
 \label{alg:truncation}
 \end{algorithm}

\noindent  {Before we state the assumptions needed for our analysis, we introduce the following notation. The matrix $V^*\Omega$ captures the influence of the starting guess on the right singular matrix $V$. Partition this matrix as
\begin{equation}\label{eqn:omegadef} V^*\Omega = \bmat{V_{k}^*\Omega \\ V_{\perp}^*\Omega} = \bmat{\Omega_1 \\ \Omega_2},\end{equation}
where $\Omega_1 = V_k^*\Omega \in \C^{k\times (k+\rho)}$ and $\Omega_2 = V_\perp^*\Omega \in \C^{(n-k)\times (k+\rho)}$. 
As was mentioned earlier, we assume that the  target rank $k$ satisfies $1 \leq k \leq \rank(A)$. Additionally, the following assumptions will be required for our analysis.
\begin{assumption}\label{ass:main} 
Let $\Omega_1 \in \C^{k\times (k+\rho)}$ be defined as above. We assume that 
\begin{equation}\label{eqn:omega1} \rank(\Omega_1) = k.  \end{equation}  
\item The singular value gap at index $k$ is inversely proportional to the singular value ratio
\begin{equation}
\gamma_k = \normtwo{\Sigma_\perp}\normtwo{\Sigma_k^{-1}}= \frac{\sigma_{k+1}}{\sigma_k} < 1.
\end{equation}
\end{assumption}
The first assumption guarantees that the starting guess $\Omega$ has a significant influence over the right singular vectors, whereas the second assumption ensures that the $k$ dimensional subspace $\range(U_k)$ is well defined. In practice, it is highly desirable that $\gamma_k \ll 1$, which ensures that there is a large singular value gap.}

\section{Accuracy of singular vectors}\label{sec:angles}

We want to understand how well $\range (\Uh)$ approximates $\range(U_k)$, measured in terms of the canonical angles between the subspaces. To this end, abbreviate the subspace angles between $\Uh \in \C^{m\times \ell}$ and $U_k \in \C^{m \times k}$ as $\theta_1,\dots,\theta_k$. Similarly, denote the angles between $\Vh \in \C^{n\times \ell}$ and $V_k \in \C^{n \times k}$ by $\nu_1,\dots,\nu_k$. We are also interested in obtaining bounds for the canonical angles $\angle(U_k,\Uh_k)$ and $\angle(V_k,\Vh_k)$. To distinguish these angles from $\angle(U_k,\Uh)$ and $\angle(V_k,\Vh)$, we call them $\theta_j'$ and $\nu_j'$ for $j=1,\dots,k$. 
\subsection{Bounds for canonical angles}\label{ssec:angles}

Our first result derives bounds for the canonical angles $\angle(U_k,\Uh)$. The analysis is based on the perturbation of projectors and the tools used here are similar to~\cite{HMT09}. 

\begin{theorem}\label{thm:u}
 Let $\Uh$ and $\Vh$ be obtained from~\cref{alg:randsvd}. With~\cref{ass:main}, the canonical angles $\theta_j$ and $\nu_j$ satisfy
\[ \sin\theta_j\leq \frac{\gamma_{j}^{2q+1} \normtwo{\Omega_2\Omega_1^\dagger}}{\sqrt{1 + \gamma_{j}^{4q+2} \normtwo{\Omega_2\Omega_1^\dagger}^2}}  \qquad \sin\nu_j \leq \frac{\gamma_{j}^{2q+2} \normtwo{\Omega_2\Omega_1^\dagger}}{\sqrt{1 + \gamma_{j}^{4q+4} \normtwo{\Omega_2\Omega_1^\dagger}^2}}   \]
 for $j=1,\dots,k$. 
\end{theorem}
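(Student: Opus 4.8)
The plan is to prove the bound on $\sin\theta_j$ first and then derive the bound on $\sin\nu_j$ as a consequence of the extra half-step that the right singular vectors receive. The starting point is the observation that $\range(\Uh) = \range(Q) = \range(Y) = \range((AA^*)^qA\Omega)$. Inserting the SVD of $A$, we get $Y = \bmat{U_k & U_\perp}\bmat{\Sigma_k^{2q+1}\Omega_1 \\ \Sigma_\perp^{2q+1}\Omega_2}$ (absorbing the appropriate powers), so $\range(\Uh)$ is the column space of $U_kH_1 + U_\perp H_2$ where $H_1 = \Sigma_k^{2q+1}\Omega_1$ and $H_2 = \Sigma_\perp^{2q+1}\Omega_2$. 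Under \cref{ass:main}, $\Omega_1$ has full row rank $k$, hence so does $H_1$, and therefore $\range(\Uh) \supseteq \range(U_k)$ is \emph{not} generally true; rather we should restrict attention to an appropriate $k$-dimensional piece. The cleaner route is to use the standard fact (as in~\cite{HMT09}) that for a matrix $Z = U_kX + U_\perp W$ with $X$ of full row rank, one has $\sin\angle(U_k, \range(Z)) \le \normtwo{W X^\dagger}/\sqrt{1 + \sigma_{\min}(\text{something})}$; more precisely, I would use the identity that the tangents of the canonical angles between $\range(U_k)$ and $\range(Z)$ are the singular values of $W X^\dagger$ (this is the key projector-perturbation lemma), which gives $\tan\theta_j \le$ the $j$-th singular value of $H_2 H_1^\dagger$, and then convert $\tan \to \sin$ via $\sin\theta = \tan\theta/\sqrt{1+\tan^2\theta}$, which is monotone.

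The next step is to bound the singular values of $H_2 H_1^\dagger = \Sigma_\perp^{2q+1}\Omega_2 \Omega_1^\dagger \Sigma_k^{-(2q+1)}$. I would submultiplicatively peel this apart, but to get the $j$-dependent ratio $\gamma_j^{2q+1}$ rather than the crude $\gamma_k^{2q+1}$, I need to be more careful: the $j$-th singular value of $H_2 H_1^\dagger$ should be bounded using a min-max / Weyl-type argument. Write $H_2 H_1^\dagger = \Sigma_\perp^{2q+1}(\Omega_2\Omega_1^\dagger)\Sigma_k^{-(2q+1)}$ and use the fact that the $j$-th singular value of a product $CD$ with $C$ fixed is at most $\normtwo{C}$ times the $j$-th singular value of $D$, and similarly on the other side. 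Applying this with the diagonal matrix $\Sigma_k^{-(2q+1)}$ split as $\diag(\sigma_1^{-(2q+1)},\dots)$ — here one uses that $\sigma_j^{(j)}(\text{diagonal scaling}) $ interacts with the ordering so that the $j$-th singular value picks up $\sigma_j^{-(2q+1)}$ rather than $\sigma_k^{-(2q+1)}$ — together with $\normtwo{\Sigma_\perp^{2q+1}} = \sigma_{k+1}^{2q+1}$ and $\normtwo{\Omega_2\Omega_1^\dagger}$, yields the $j$-th singular value of $H_2H_1^\dagger$ bounded by $\gamma_j^{2q+1}\normtwo{\Omega_2\Omega_1^\dagger}$. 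Feeding this into the $\tan\to\sin$ conversion gives the claimed bound for $\theta_j$.

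For $\nu_j$, I would repeat the argument with $\Vh$ in place of $\Uh$. Since $\Vh$ is obtained from the SVD of $B = Q^*A$, one has $\range(\Vh) = \range(A^*Q) = \range(A^*Y')$ for a basis $Y'$, which effectively applies one more factor of $A^*$ (equivalently one more half-power), replacing $2q+1$ by $2q+2$ everywhere; the diagonal-scaling argument then produces $\gamma_j^{2q+2}$ and the same $\tan\to\sin$ conversion finishes the proof. The main obstacle I anticipate is the second step: justifying the \emph{per-index} bound $\sigma_j^{(j)}(H_2H_1^\dagger) \le \gamma_j^{2q+1}\normtwo{\Omega_2\Omega_1^\dagger}$ rather than the weaker uniform bound with $\gamma_k$. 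This requires a careful application of the interlacing/min-max characterization of singular values of products with a triangular or diagonal factor — precisely, that pre- or post-multiplying by a diagonal matrix $D$ with decreasing entries does not move the $j$-th singular value above $D_{jj}$ times the operator norm of the rest, exploiting that $\Sigma_k^{-1}$'s largest-on-the-relevant-block entry, when intersected with the $j$-dimensional subspace achieving the $j$-th singular value, is $\sigma_j^{-1}$. I would isolate this as a short lemma and prove it via the Courant–Fischer min-max theorem applied to $H_1^{\dagger *}H_2^* H_2 H_1^\dagger$, restricting to the subspace spanned by the top $j$ right singular vectors of $\Sigma_k^{-(2q+1)}$.
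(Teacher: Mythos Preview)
Your approach is essentially the paper's, recast in the tangent language: the paper works with the sine matrix via $U_k^*(I-\proj{\Uh})U_k \preceq F^*F(I+F^*F)^{-1}$, which is just your $\tan\to\sin$ conversion written on the level of operators. Two clarifications will sharpen your plan.

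First, the ``identity'' you invoke---that the tangents of $\angle(U_k,\range(Y))$ equal the singular values of $H_2H_1^\dagger$---is not an equality when $\range(Y)$ has dimension $k+\rho>k$. What holds is the inequality $\tan\theta_j \le \sigma_{k-j+1}(H_2H_1^\dagger)$, and the way to get it is exactly the paper's ``shrinking'' step: post-multiply $Y$ by $\Omega_1^\dagger\Sigma_k^{-(2q+1)}$ to land in the $k$-dimensional subspace $\range(U\bmat{I\\F})\subset\range(Y)$ with $F=H_2H_1^\dagger$, for which the tangent identity \emph{does} hold; enlarging to $\range(Y)$ only decreases the angles. In the paper this is phrased as the projector comparison $\proj{Z}\preceq\proj{U^*\Uh}$.

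Second, the per-index bound you flag as the ``main obstacle'' is a one-liner, not a Courant--Fischer argument. Write $F = C\,\Sigma_k^{-(2q+1)}$ with $C=(\Sigma_\perp\Sigma_\perp^\top)^q\Sigma_\perp\Omega_2\Omega_1^\dagger$ and apply the multiplicative singular value inequality $\sigma_j(CD)\le\normtwo{C}\,\sigma_j(D)$: since the $j$-th singular value of the diagonal $\Sigma_k^{-(2q+1)}$ is $\sigma_{k-j+1}^{-(2q+1)}$, you get $\sigma_j(F)\le\sigma_{k+1}^{2q+1}\normtwo{\Omega_2\Omega_1^\dagger}\,\sigma_{k-j+1}^{-(2q+1)}=\gamma_{k-j+1}^{2q+1}\normtwo{\Omega_2\Omega_1^\dagger}$, and reindexing gives the claimed $\gamma_j$-dependence. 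The elaborate min--max/interlacing lemma you propose is unnecessary.
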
•  
{
This theorem has several interesting features worth pointing out. First, if the matrix has exact rank $k$, then all of the canonical angles are uniformly equal to zero; that is, the randomized subspace iteration identifies the subspace exactly.  On the other hand, when $\gamma_k$ is very close to $1$, the subspaces may not be well-defined and {may be} difficult to identify. In practice, it is highly desirable that $\gamma_k \ll 1$, so that the angles are captured accurately.

Second, the bounds for the canonical angles show explicit dependence on the singular value ratios $\gamma_j$. In particular, the canonical angles $\theta_j$ and $\nu_j$ converge to zero quadratically but at different rates depending on the singular value ratios.  Specifically, the smaller the singular value ratio, smaller the canonical angles.

Third, the term $\normtwo{\Omega_2\Omega_1^\dagger}$ can be written in terms of the right singular vector matrix $V$ and the starting guess $\Omega$ as 
\[ \normtwo{\Omega_2\Omega_1^\dagger} = \normtwo{ (V_{\perp}^*\Omega)(V_k^*\Omega)^\dagger }.\]
When the columns of $\Omega$ is linearly independent, this quantity is nothing but the tangent of the largest canonical angle between $\range(V_k)$ and $\range(\Omega)$. This term appears frequently in randomized linear algebra and can be interpreted as  a measure of the subspace overlap between the starting guess and the right singular vectors. In the ideal case, $\Omega$ contains the singular vectors in $V_k$. 
A discussion of the meaning and interpretation of this term, is provided in~\cite[Section 2.5]{drineas2017structural}. In particular, when $\Omega$ is a Gaussian random matrix, $\normtwo{\Omega_2\Omega_1^\dagger}$ is roughly on the order of $\sqrt{(n-k)k}$. 

Fourth, the influence of $\normtwo{\Omega_2\Omega_1^\dagger}$ is subdued by the singular value ratios $\gamma_j^{2q+1}$.  With sufficiently large number of iterations $q$, the canonical angles are smaller than a user-defined tolerance. Rigorous bounds for the requisite number of iterations are provided in~\cref{ssec:prob}.

Lastly, the bounds for the canonical angles  $\theta_j$ are smaller than $\nu_j$ because the latter contains an additional power of $\gamma_j$. The reason for this higher accuracy is as follows: the columns of $\Vh$ are the right  singular vectors of $Q^*A$. Therefore, the multiplication step with $Q$ amounts to an additional step of subspace iteration and gives the extra factor. 
}

\begin{remark}
\cref{thm:u} gives the sine of the canonical angles; these bounds can also be used to obtain upper bounds for the tangents and lower bounds for the cosines. 
 With the same assumptions and notation as in~\cref{thm:u}, the relationship between the tangent and sine implies 
 \[\tan\theta_j \leq {\gamma_{j}^{2q+1} \normtwo{\Omega_2\Omega_1^\dagger}} \qquad \tan\nu_j \leq {\gamma_{j}^{2q+2} \normtwo{\Omega_2\Omega_1^\dagger}} \]
 for $j=1,\dots,k$. Lower bounds for cosine of the canonical angles follow similarly.
\end{remark}

\paragraph{Unitarily invariant norms}  The following result derives bounds for the canonical angles in any unitarily invariant norm, in contrast to~\cref{thm:u} which bounds the individual canonical angles. 

\begin{theorem}\label{thm:theta2F} Let the approximate singular vectors $\Uh$ and $\Vh$ for a matrix $A$ be computed according to~\cref{alg:randsvd}. Under~\cref{ass:main}, for every unitarily invariant norm, 
\begin{equation}
\begin{aligned}
 \uninorm{\sin\angle(U_k,\Uh) } \leq & \>  \gamma_k^{2q} \frac{ \uninorm{ \Sigma_\perp}}{\sigma_k} \normtwo{\Omega_2\Omega_1^\dagger}, \\ 
 \uninorm{\sin\angle(V_k,\Vh) } \leq & \> \gamma_k^{2q+1} \frac{\uninorm{ \Sigma_\perp}}{\sigma_k} \normtwo{\Omega_2\Omega_1^\dagger}. 
\end{aligned}
\end{equation}
\end{theorem}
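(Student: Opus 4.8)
The plan is to reduce the theorem, exactly as in the proof of \cref{thm:u}, to a bound on a single auxiliary matrix built from $\Sigma_\perp$, $\Sigma_k^{-1}$ and $\Omega_2\Omega_1^\dagger$, and then to exploit that unitarily invariant norms are submultiplicative against the spectral norm. Writing $A=U\Sigma V^*$ with the $2\times 2$ block partition of the SVD, we have $Y=(AA^*)^qA\Omega = U\,\diag\!\big(\Sigma_k^{2q+1},\,(\Sigma_\perp\Sigma_\perp^*)^q\Sigma_\perp\big)\,V^*\Omega$, whose top block $\Sigma_k^{2q+1}\Omega_1$ has full row rank $k$ by \cref{ass:main}. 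Right-multiplying $Y$ by a right inverse of that top block produces the $m\times k$ matrix $\Uh^{\sharp}\equiv U_k+U_\perp F$ with $F=(\Sigma_\perp\Sigma_\perp^*)^q\Sigma_\perp\,\Omega_2\Omega_1^\dagger\,\Sigma_k^{-(2q+1)}$; since $\Uh^{\sharp}=Y(\Sigma_k^{2q+1}\Omega_1)^\dagger$ has columns in $\range(Y)=\range(Q)=\range(\Uh)$ (the last equality because $\Uh=QU_B$ with $U_B$ unitary), we get $\range(\Uh^{\sharp})\subseteq\range(\Uh)$. Applying the same manipulation to $A^*Y=(A^*A)^{q+1}\Omega = V\,\diag\!\big(\Sigma_k^{2q+2},\,(\Sigma_\perp^*\Sigma_\perp)^{q+1}\big)\,V^*\Omega$ yields $\Vh^{\sharp}\equiv V_k+V_\perp G\subseteq\range(\Vh)$ with $G=(\Sigma_\perp^*\Sigma_\perp)^{q+1}\Omega_2\Omega_1^\dagger\,\Sigma_k^{-(2q+2)}$, using $\range(A^*Y)\subseteq\range(A^*Q)=\range(B^*)\subseteq\range(\Vh)$.

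Next I would invoke monotonicity of canonical angles under subspace inclusion: if $\mc{S}\subseteq\mc{T}$ then $\proj{\mc{S}}\preceq\proj{\mc{T}}$, so $U_k^*(I-\proj{\mc{T}})U_k\preceq U_k^*(I-\proj{\mc{S}})U_k$, and Weyl's inequality gives $\sin\theta_j(U_k,\mc{T})\le\sin\theta_j(U_k,\mc{S})$ for each $j$; since every unitarily invariant norm is a monotone symmetric gauge function of these singular values, $\uninorm{\sin\angle(U_k,\Uh)}\le\uninorm{\sin\angle(U_k,\Uh^{\sharp})}$, and likewise for $V$. It then remains to bound $\uninorm{\sin\angle(U_k,\Uh^{\sharp})}$. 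Because $\bmat{U_k & U_\perp}$ is unitary, the canonical angles between $\range(U_k)$ and $\range(\Uh^{\sharp})$ equal those between $\range\bmat{I_k\\ 0}$ and $\range\bmat{I_k\\ F}$; an orthonormal basis of the latter is $\bmat{I_k\\ F}(I+F^*F)^{-1/2}$, whence $\cos\angle$ consists of the singular values of $(I+F^*F)^{-1/2}$, i.e. $\cos\theta_j=(1+s_j(F)^2)^{-1/2}$ and therefore $\sin\theta_j=s_j(F)/\sqrt{1+s_j(F)^2}\le s_j(F)$. Monotonicity of the gauge function then gives $\uninorm{\sin\angle(U_k,\Uh^{\sharp})}\le\uninorm{F}$, and similarly $\uninorm{\sin\angle(V_k,\Vh^{\sharp})}\le\uninorm{G}$.

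Finally I would bound $\uninorm{F}$ and $\uninorm{G}$ by peeling off spectral-norm factors while keeping exactly one copy of $\Sigma_\perp$ inside the norm, using $\uninorm{XYZ}\le\normtwo{X}\uninorm{Y}\normtwo{Z}$ repeatedly together with $\normtwo{(\Sigma_\perp\Sigma_\perp^*)^q}=\sigma_{k+1}^{2q}$, $\normtwo{\Sigma_\perp}=\sigma_{k+1}$, $\normtwo{\Sigma_k^{-(2q+1)}}=\sigma_k^{-(2q+1)}$ and $\uninorm{\Sigma_\perp^*}=\uninorm{\Sigma_\perp}$. This gives $\uninorm{F}\le\sigma_{k+1}^{2q}\sigma_k^{-(2q+1)}\uninorm{\Sigma_\perp}\normtwo{\Omega_2\Omega_1^\dagger}=\gamma_k^{2q}\tfrac{\uninorm{\Sigma_\perp}}{\sigma_k}\normtwo{\Omega_2\Omega_1^\dagger}$ and, after rewriting $(\Sigma_\perp^*\Sigma_\perp)^{q+1}=\Sigma_\perp^*(\Sigma_\perp\Sigma_\perp^*)^q\Sigma_\perp$, the analogous estimate $\uninorm{G}\le\gamma_k^{2q+1}\tfrac{\uninorm{\Sigma_\perp}}{\sigma_k}\normtwo{\Omega_2\Omega_1^\dagger}$; chaining the three steps proves the theorem. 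The only delicate points are the bookkeeping with the rectangular block $\Sigma_\perp$ and the right/Moore--Penrose inverses (so that the identity blocks come out exactly), and choosing the retained factor of $\Sigma_\perp$ so that the bound involves $\uninorm{\Sigma_\perp}$ rather than a cruder $\gamma_k$-weighted quantity such as $\gamma_k^{2q+1}\uninorm{I_k}$; the remaining manipulations are routine and largely parallel the proof of \cref{thm:u}.
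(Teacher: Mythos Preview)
Your proposal is correct and follows essentially the same approach as the paper's proof. Both arguments construct the same auxiliary matrix $F=(\Sigma_\perp\Sigma_\perp^*)^q\Sigma_\perp\Omega_2\Omega_1^\dagger\Sigma_k^{-(2q+1)}$ (your $\Uh^\sharp=UZ$ with $Z=\bmat{I\\F}$ is exactly the paper's $Z$ from the proof of \cref{thm:u}), use the inclusion $\range(\Uh^\sharp)\subseteq\range(\Uh)$ (equivalently the paper's $\proj{Z}\preceq\proj{U^*\Uh}$) to reduce to bounding $\uninorm{F}$, and then finish with strong sub-multiplicativity; the only cosmetic difference is that you compute $\sin\theta_j=s_j(F)/\sqrt{1+s_j(F)^2}\le s_j(F)$ at the singular-value level and invoke monotonicity of the gauge function, whereas the paper passes through the Loewner ordering $GS_U^2G^*\preceq H\preceq F^*F$ and operator monotonicity of the square root.
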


The interpretation of this theorem is similar to that of~\cref{thm:u}. The connection between the two theorems follows from the identity $\sin\theta_k = \|\sin\angle(U_k,\Uh) \|_{2}$. If we specialize the result in~\cref{thm:theta2F} to the spectral norm, then it is clear that this result weaker than the bound in~\cref{thm:u}. 

\subsection{Extraction of $k$-dimensional subspaces}\label{ssec:sintheta}
In the previous subsection, the columns of $\Uh$ and $\Vh$ spanned $\ell = k +\rho$ dimensional subspaces. Many applications, however, require the extraction of $k$ dimensional singular subspaces from the low-rank approximation $\Ah \equiv QQ^*A$. One way to extract the appropriate subspaces is to first compute the optimal rank-$k$ truncation of $\Ah$, denoted by $\Ah_k$. The singular vectors of $\Ah_k$, denoted by $\Uh_k$ and $\Vh_k$, are then used instead of $\Uh$ and $\Vh$. See~\cref{alg:truncation}, for details regarding implementation. The bounds derived in the previous subsection are not directly applicable since~\cite[Corollary 10]{ye2016schubert} says 
\[ \theta_j \leq \theta_j'  \qquad \nu_j \leq \nu_j' \qquad j=1,\dots,k.\] 
To understand how much additional error is incurred during this extraction process, we present several results. The important conclusion of all these results is that the accuracy of the extracted subspaces of dimension $k$ is comparable to the accuracy of the $k+\rho$ dimensional subspace provided the singular values are sufficiently well separated.

The approach we take is different from that of the previous section. The starting point of our analysis is the well-known sin theta theorem for singular subspaces~\cite{wedin1983angles}. Let $A,\Ah$ be two matrices of conformal dimensions. Assuming that  
\begin{equation}\label{eqn:gap}
\zeta \equiv \sigma_k(A) - \sigma_{k+1}(\Ah) > 0,
\end{equation}
we have
\begin{equation}\label{eqn:sintheta} \max\left\{ \uninorm{\sin \angle(U_k,\Uh_k)} ,\uninorm{\sin\angle(V_k,\Vh_k)} \right\} \leq \frac{\max\{\uninorm{E_{12}},\uninorm{E_{21}}\}}{\zeta},
\end{equation}
 where  the two matrices $E_{12}$ and $E_{21}$ are 
\begin{equation} \label{eqn:e1}
\begin{aligned}
E_{12}  = & \>(I-\proj{\Uh_k}) (A-\Ah) \proj{V_k}\\
E_{21}  = &  \> \proj{{U}_k} (A-\Ah) (I-\proj{\Vh_k}).
\end{aligned} 
\end{equation}

However, this version of the sin theta theorem does not provide us with a way to obtain bounds for the individual canonical angles. To this end, we first present a new generalization of the sin theta theorem. 
\begin{theorem}\label{thm:gensintheta} Let $A \in \C^{m\times n}$ with $\rank(A) \geq k$ and let $\Ah$ be the perturbed matrix with same dimensions. Suppose the singular value gap satisfies~\cref{eqn:gap}. 
Let $\Ah_k = \Uh_k\Sh_k\Vh_k^*$ be the truncated SVD of $\Ah$. Then 
\[ \max \{ \sin \theta_j' , \sin \nu_j' \} \leq \frac{\sigma_k(A)}{\sigma_{j}(A)} \max\{\sin\theta_k',\sin\nu_k'\} \qquad j=1,\dots,k.\]
\end{theorem}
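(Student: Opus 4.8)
The plan is to reduce the bound on individual canonical angles to the "top" angle $\max\{\sin\theta_k',\sin\nu_k'\}$ by exploiting the variational characterization of canonical angles together with the optimality of the truncated SVD. The key structural fact is that both $\Uh_k$ and $\Vh_k$ are bases for the dominant $k$-dimensional singular subspaces of $\Ah$, so that $\Ah_k = \proj{\Uh_k}\Ah = \Ah\proj{\Vh_k}$, and that $\sigma_j(A) = \|A\proj{\cdot}\|$-type quantities control how much "room" each singular direction has. First I would apply the sin theta theorem, \cref{eqn:sintheta}, which already gives $\max\{\uninorm{\sin\angle(U_k,\Uh_k)},\uninorm{\sin\angle(V_k,\Vh_k)}\} \le \max\{\uninorm{E_{12}},\uninorm{E_{21}}\}/\zeta$ for every unitarily invariant norm. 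Specializing the norm to the Ky-Fan norms (or directly to individual singular values of $E_{12}$, $E_{21}$) is what will let us pass from a single scalar bound to a bound on each $\sin\theta_j'$.

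The main step is to show that $\sigma_j(E_{12}) \le \tfrac{\sigma_k(A)}{\sigma_j(A)}\,\sigma_k(E_{12})$ — and similarly for $E_{21}$ — which is \emph{false in general} for arbitrary matrices, so the argument must use the specific form of $E_{12}$ and $E_{21}$. Here $E_{12} = (I-\proj{\Uh_k})(A-\Ah)\proj{V_k} = (I-\proj{\Uh_k})A\proj{V_k}$, using $(I-\proj{\Uh_k})\Ah\proj{V_k} = (I-\proj{\Uh_k})\proj{\Uh_k}\Ah\proj{V_k} = 0$ since $\Ah_k = \proj{\Uh_k}\Ah$ and (crucially) $\range(\Ah-\Ah_k)\perp\range(\Uh_k)$ only in the exact-SVD sense — I would need to be careful here and instead write $E_{12}=(I-\proj{\Uh_k})A\proj{V_k}$ by absorbing the $\Ah$ term. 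Then $E_{12} = (I-\proj{\Uh_k})(U_k\Sigma_k V_k^* + A_\perp)\proj{V_k} = (I-\proj{\Uh_k})U_k\Sigma_k V_k^*\proj{V_k} + (I-\proj{\Uh_k})A_\perp\proj{V_k}$; since $V_k^*\proj{V_k}=V_k^*$ and $A_\perp \proj{V_k}=0$, this collapses to $E_{12}=(I-\proj{\Uh_k})U_k\Sigma_k V_k^*$. Therefore the $j$-th singular value of $E_{12}$ is at most $\sigma_j(\Sigma_k)\cdot\|(I-\proj{\Uh_k})U_k\|_2 = \sigma_j(A)\sin\theta_k'$ — wait, this goes the wrong direction; I actually want $\sigma_j(E_{12})$ small \emph{relative to} $\sigma_k(E_{12})$. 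Instead the right move is: from $E_{12}=(I-\proj{\Uh_k})U_k\Sigma_k V_k^*$ we get, by submultiplicativity of Ky-Fan norms with the diagonal factor $\Sigma_k$ whose $j$-th entry is $\sigma_j(A)$, that $\sigma_j(E_{12}) \le \sigma_1(A)\sin\theta_j'$, but also $\sigma_j(E_{12}) \ge \sigma_k(A)\,\sigma_j\big((I-\proj{\Uh_k})U_k\big) = \sigma_k(A)\sin\theta_j'$ — combining, $\sin\theta_j' \le \sigma_j(E_{12})/\sigma_k(A)$ and $\sigma_j(E_{12}) \le \sigma_j(A)\sin\theta_j'\le\sigma_j(A)\cdot(\text{larger angle})$; iterating/chaining with the sin theta bound $\sin\theta_k' \le \max\{\uninorm{E_{12}},\uninorm{E_{21}}\}/\zeta$ at the Ky-Fan-$1$ level gives the claimed ratio.

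So the clean route I would take is: (i) establish $E_{12}=(I-\proj{\Uh_k})A\proj{V_k}=(I-\proj{\Uh_k})U_k\Sigma_k V_k^*$ and symmetrically $E_{21}=\Sigma_k$-weighted version, i.e. $E_{21}=U_k\Sigma_k V_k^*(I-\proj{\Vh_k})$ composed appropriately; (ii) deduce $\sigma_k(A)\sin\theta_j' \le \sigma_j(E_{12}) \le \sigma_j(A)\sin\theta_j'$ and likewise $\sigma_k(A)\sin\nu_j'\le\sigma_j(E_{21})\le\sigma_j(A)\sin\nu_j'$ from the sandwiching of singular values of a product by a diagonal matrix (using $\sigma_j(D X)\ge \sigma_{\min}(D)\,\sigma_j(X)$ when $D$ is square invertible, and $\sigma_j(DX)\le \sigma_j(D)\sigma_1(X)$ when $X$ has norm $\le 1$ — here $\|(I-\proj{\Uh_k})U_k\|_2 = \sin\theta_k'\le 1$, actually we need the per-index version, so use $\sigma_j(DX)\le \sigma_j(D)$ when $\|X\|_2\le 1$); (iii) apply \cref{eqn:sintheta} in the Ky-Fan-$1$ (spectral) norm to get $\max\{\sin\theta_k',\sin\nu_k'\} \le \max\{\sigma_1(E_{12}),\sigma_1(E_{21})\}/\zeta$, and in fact I only need the elementary consequence that for each $j$, $\sigma_j(E_{12})\le\sigma_j(A)\max\{\sin\theta_k',\sin\nu_k'\}$ (from step (ii) with the crude bound $\sin\theta_j'\le\sin\theta_k'$), whence $\sin\theta_j' \le \sigma_j(E_{12})/\sigma_k(A) \le \frac{\sigma_j(A)}{\sigma_k(A)}\max\{\sin\theta_k',\sin\nu_k'\}$ — but this has the ratio upside down. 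The fix is that $\sigma_j(A)\ge\sigma_k(A)$, so I should instead chain: $\sin\theta_j'\le\sigma_j(E_{12})/\sigma_k(A)$ and I need an \emph{upper} bound on $\sigma_j(E_{12})$ that scales like $\sigma_k(A)\cdot(\sigma_k(A)/\sigma_j(A))\cdot(\text{top angle})$; this comes from re-deriving the sin theta theorem at index $j$ with gap $\sigma_j(A)-\sigma_{k+1}(\Ah)$ rather than $\zeta$. \textbf{The main obstacle} is precisely getting the \emph{direction} of the singular-value ratio right: a naive application of the sin theta theorem gives $\sigma_j(A)/\sigma_k(A)\ge 1$ in the denominator, whereas the theorem claims $\sigma_k(A)/\sigma_j(A)\le 1$ as a \emph{multiplier}. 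Resolving this requires bounding $\sin\theta_j'$ by $\sigma_j(E_{12})/(\sigma_j(A)-\sigma_{k+1}(\Ah))$ (an index-$j$ sin theta bound, valid since $\sigma_j(A)\ge\sigma_k(A)>\sigma_{k+1}(\Ah)$ so the gap is positive), then bounding $\sigma_j(E_{12}) \le \sigma_k(A)\cdot\sigma_{\text{something}}$ via step (ii)'s \emph{upper} half applied to the \emph{complementary} factor, and finally comparing $(\sigma_j(A)-\sigma_{k+1}(\Ah))$ to $\sigma_j(A)$. I expect the bookkeeping tying $\zeta$, the index-$j$ gaps, and the two-sided singular value sandwich together to be the delicate part; everything else is the standard projector algebra already used for \cref{thm:u} and the sin theta theorem.
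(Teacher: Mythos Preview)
Your proposal has a genuine gap at the very first structural step, and the subsequent chaining never recovers from it.

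\textbf{The simplification of $E_{12}$ is wrong.} You write $E_{12}=(I-\proj{\Uh_k})A\proj{V_k}=(I-\proj{\Uh_k})U_k\Sigma_kV_k^*$, arguing that the $\Ah$ contribution vanishes. It does not: $(I-\proj{\Uh_k})\Ah=\Ah-\Ah_k\equiv B$, and $B\proj{V_k}$ is generically nonzero. What is true is $B\proj{\Vh_k}=0$, so $B\proj{V_k}=B(I-\proj{\Vh_k})\proj{V_k}=BY$ with $Y=(I-\proj{\Vh_k})\proj{V_k}$. The correct identity is therefore
\[
E_{12}=XA_k-BY,\qquad E_{21}=X^*B-A_kY^*,
\]
where $X=(I-\proj{\Uh_k})\proj{U_k}$. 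The extra $BY$ and $X^*B$ terms are exactly what couple the left and right angles and make the theorem a joint bound on $\max\{\sin\theta_j',\sin\nu_j'\}$ rather than two separate bounds.

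\textbf{The ratio direction.} You correctly diagnose that your chain produces $\sigma_j(A)/\sigma_k(A)\ge 1$ rather than the claimed $\sigma_k(A)/\sigma_j(A)\le 1$, and your attempted fix via an ``index-$j$ sin theta bound'' is not an argument. The missing idea is to \emph{invert} $A_k$ rather than to sandwich with it. From $E_{12}=XA_k-BY$ and $X\proj{U_k}=X$, right-multiply by $A_k^\dagger$ to get
\[
X=(E_{12}+BY)\,A_k^\dagger.
\]
Now apply the product singular value inequality $\sigma_j(MA_k^\dagger)\le \|M\|_2\,\sigma_j(A_k^\dagger)=\|M\|_2/\sigma_{k-j+1}(A)$, which puts $\sigma_{k-j+1}(A)$ in the \emph{denominator} and immediately gives the correct direction of the ratio. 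One obtains $\sigma_j(X)\le(\|E_{12}\|_2+\|B\|_2\|Y\|_2)/\sigma_{k-j+1}(A)$ and symmetrically for $Y$; since $\|B\|_2=\sigma_{k+1}(\Ah)$, combining with the spectral-norm sin theta bound \eqref{eqn:sintheta} and the identity $\frac{1}{\sigma_{k-j+1}(A)}\big(1+\frac{\sigma_{k+1}(\Ah)}{\zeta}\big)=\frac{\sigma_k(A)}{\sigma_{k-j+1}(A)\,\zeta}$ yields the theorem after the reindexing $j\leftarrow k-j+1$.

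In short: keep the $BY$ term, isolate $X$ (not $E_{12}$) via $A_k^\dagger$, and let the pseudoinverse's singular values supply the $1/\sigma_j(A)$ factor.
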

This theorem states that the sine of the canonical angles $\sin \theta_j'$ are bounded by  $\sin \theta_k' $ up to a multiplicative factor, which is at most $1$.

Our main result provides the following bounds for canonical angles between the exact and the approximate singular subspaces, when both the subspaces have the same dimension. The proof involves simplifying every term in~\cref{eqn:e1}.
\begin{theorem}\label{thm:sintheta} 
Let $\Uh$ and $\Vh$ be obtained from~\cref{alg:randsvd}, and matrices $\Uh_k$ and $\Vh_k$ from~\cref{alg:truncation}. Under~\cref{ass:main}, 
\begin{itemize}
\item for every unitarily invariant norm 
\[  \max\left\{ \uninorm{ \sin \angle (U_k,\Uh_k) } ,\uninorm{\sin\angle(V_k,\Vh_k)} \right\} \leq \phi\frac{\gamma_k^{2q}}{1-\gamma_k} \frac{\uninorm{\Sigma_\perp}}{\sigma_k} \normtwo{\Omega_2\Omega_1^\dagger}.\]
The factor $\phi$ takes different values depending on the specific norm used. For an arbitrary unitarily invariant norm, we have  $\phi= \sqrt{2}$, whereas for the spectral and Frobenius norms, we have $\phi = 1$.
\item  canonical angles $\theta_j'$ and $\nu_j'$ satisfy
\[\max \{ \sin \theta_j' , \sin \nu_j' \} \leq \gamma_{j} \frac{\gamma_k^{2q}}{1-\gamma_k}\normtwo{\Omega_2\Omega_1^\dagger} \qquad j=1,\dots,k.\]
\end{itemize}• 
\end{theorem}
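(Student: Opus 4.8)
The plan is to prove \cref{thm:sintheta} by instantiating the generalized sin theta theorem (\cref{thm:gensintheta}) together with the classical sin theta bound \cref{eqn:sintheta}, and then carefully bounding each of the four error terms appearing in $E_{12}$ and $E_{21}$ from \cref{eqn:e1}. First I would verify the gap condition \cref{eqn:gap}: here $\Ah = QQ^*A$ so $\sigma_{k+1}(\Ah) \leq \sigma_{k+1}(A) = \sigma_{k+1}$ (since $\Ah$ is a projection of $A$, its singular values are dominated by those of $A$), hence $\zeta = \sigma_k(A) - \sigma_{k+1}(\Ah) \geq \sigma_k - \sigma_{k+1} = \sigma_k(1-\gamma_k) > 0$ by \cref{ass:main}. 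This gives the denominator $1-\gamma_k$ (after pulling out $\sigma_k$) in the stated bound and simultaneously legitimizes applying \cref{thm:gensintheta}.

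Next I would bound the numerator $\max\{\uninorm{E_{12}}, \uninorm{E_{21}}\}$. The key observation is that $A - \Ah = A - QQ^*A = (I - QQ^*)A$. For $E_{12} = (I-\proj{\Uh_k})(I-QQ^*)A\proj{V_k}$: since $\range(\Uh_k) \subseteq \range(Q)$, we have $(I-\proj{\Uh_k})(I-QQ^*) = (I-QQ^*)$, so $E_{12} = (I-QQ^*)A V_k V_k^*$, and then $\uninorm{E_{12}} = \uninorm{(I-QQ^*)AV_k} \leq \uninorm{(I-QQ^*)A_k} + \uninorm{(I-QQ^*)A_\perp V_k}$. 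The first term is essentially the low-rank approximation error restricted to the dominant part, which should be controllable by the deterministic machinery (the standard bound $\uninorm{(I-QQ^*)A_k}$ in terms of $\gamma_k^{2q}$, $\uninorm{\Sigma_\perp}/\sigma_k$ and $\normtwo{\Omega_2\Omega_1^\dagger}$ — this is exactly the type of estimate underlying \cref{thm:theta2F}, which states $\uninorm{\sin\angle(U_k,\Uh)} = \uninorm{(I-QQ^*)U_k} \leq \gamma_k^{2q}\frac{\uninorm{\Sigma_\perp}}{\sigma_k}\normtwo{\Omega_2\Omega_1^\dagger}$, and multiplying by $\Sigma_k$ gives the $A_k$ version). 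For $E_{21} = \proj{U_k}(I-QQ^*)A(I-\proj{\Vh_k})$, note $\proj{U_k}(I-QQ^*)A = \proj{U_k}(I-QQ^*)(A_k + A_\perp)$; the $A_\perp$ piece is annihilated up to the small-angle term because $\proj{U_k}A_\perp = U_k U_k^* U_\perp \Sigma_\perp V_\perp^* = 0$, leaving $\proj{U_k}(I-QQ^*)A_k(I-\proj{\Vh_k})$, which is again of the right order. The $\sqrt{2}$ versus $1$ dichotomy for $\phi$ arises from how one combines the two summands: for spectral and Frobenius norms one can exploit orthogonality of ranges / Pythagorean-type identities to avoid the triangle-inequality factor, whereas for a general unitarily invariant norm one settles for $\uninorm{X+Y} \leq \uninorm{X} + \uninorm{Y}$ and a $\sqrt{2}$-type bound on $\max$.

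For the second bullet (individual angles $\theta_j', \nu_j'$), I would combine \cref{thm:gensintheta} — which gives $\max\{\sin\theta_j',\sin\nu_j'\} \leq \frac{\sigma_k(A)}{\sigma_j(A)}\max\{\sin\theta_k',\sin\nu_k'\} = \gamma_j \gamma_k^{-1} \cdot \frac{\sigma_{k+1}}{\sigma_k}^{-1}\cdots$, wait, more cleanly: $\frac{\sigma_k}{\sigma_j} = \gamma_j/\gamma_k$ — with the first bullet specialized to the spectral norm, where $\phi = 1$, so $\max\{\sin\theta_k',\sin\nu_k'\} \leq \frac{\gamma_k^{2q}}{1-\gamma_k}\frac{\sigma_{k+1}}{\sigma_k}\normtwo{\Omega_2\Omega_1^\dagger} = \gamma_k\frac{\gamma_k^{2q}}{1-\gamma_k}\normtwo{\Omega_2\Omega_1^\dagger}$. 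Multiplying by $\sigma_k/\sigma_j = \gamma_j/\gamma_k$ yields exactly $\gamma_j\frac{\gamma_k^{2q}}{1-\gamma_k}\normtwo{\Omega_2\Omega_1^\dagger}$, as claimed.

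The main obstacle I anticipate is the bookkeeping in the first step: getting sharp (rather than merely order-correct) bounds on $\uninorm{(I-QQ^*)A_k}$ and the cross terms $\uninorm{(I-QQ^*)A_\perp V_k}$, $\uninorm{\proj{U_k}(I-QQ^*)A_k}$, and in particular justifying that the $A_\perp$-type contributions are genuinely dominated by (or absorbable into) the $\gamma_k^{2q}\uninorm{\Sigma_\perp}\sigma_k^{-1}\normtwo{\Omega_2\Omega_1^\dagger}$ scale without introducing extra constants that would spoil $\phi = 1$ in the spectral/Frobenius case. This requires the explicit deterministic structure of $Q = \mathrm{orth}((AA^*)^qA\Omega)$ — concretely, writing $Q$'s action via the block form $V^*\Omega = \bmat{\Omega_1 \\ \Omega_2}$ and the flattened iteration matrix, exactly as in the proofs of \cref{thm:u} and \cref{thm:theta2F} — so the cleanest route is to reuse those lemmas verbatim rather than re-derive the projector estimates from scratch.
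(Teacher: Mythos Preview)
Your overall strategy --- apply the sin theta bound \cref{eqn:sintheta}, control the gap $\zeta$ via $\sigma_{k+1}(\Ah)\le\sigma_{k+1}(A)$, bound $E_{12}$ and $E_{21}$ separately, then feed the spectral-norm case into \cref{thm:gensintheta} for the second bullet --- matches the paper. The gap step and the second bullet are fine as written. Two places need correction.

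\textbf{The $E_{21}$ bound.} Your claim that ``the $A_\perp$ piece is annihilated'' is false: while $\proj{U_k}A_\perp=0$, you need $\proj{U_k}(I-QQ^*)A_\perp$, and this equals $-\proj{U_k}QQ^*A_\perp$, which is generically nonzero and carries no $\gamma_k^{2q}$ factor on its own. Splitting $A=A_k+A_\perp$ inside $E_{21}$ therefore does not work. The paper instead keeps $A$ whole and exploits idempotence of $I-QQ^*$ to write
\[
\uninorm{E_{21}}\le \normtwo{\proj{U_k}(I-QQ^*)}\,\uninorm{(I-QQ^*)A},
\]
recognizes $\normtwo{\proj{U_k}(I-QQ^*)}=\normtwo{\sin\angle(U_k,\Uh)}$, and then invokes \cref{thm:u} for the first factor and \cref{thm:lowrank_schatten} (resp.\ \cref{thm:lowrank}) for the second. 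The decay in $q$ comes entirely from the $\sin\theta_k$ factor.

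\textbf{The origin of $\phi$.} The $\sqrt{2}$ versus $1$ does not come from a Pythagorean combination of two summands in $E_{12}$ or $E_{21}$; for $E_{12}$ there is only one term (note $A\proj{V_k}=A_k$ exactly, so your cross term $(I-QQ^*)A_\perp V_k$ is identically zero --- your split there is harmless but unnecessary). The constant $\phi$ arises solely in bounding $\uninorm{(I-QQ^*)A}$ inside the $E_{21}$ estimate: with $\beta=\gamma_k^{2q}\normtwo{\Omega_2\Omega_1^\dagger}$, for a general unitarily invariant norm one has only $\uninorm{(I-QQ^*)A}\le(1+\beta)\uninorm{\Sigma_\perp}$ from \cref{thm:lowrank_schatten}, and the algebraic fact $\gamma_k(1+\beta)/\sqrt{1+\gamma_k^2\beta^2}\le\sqrt{2}$ produces $\phi=\sqrt{2}$. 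For spectral and Frobenius norms the sharper Schatten bound $\norm{(I-QQ^*)A}{\xi}\le\sqrt{1+\beta^2}\,\norm{\Sigma_\perp}{\xi}$ from \cref{thm:lowrank} is available, and $\sqrt{\gamma_k^2+\gamma_k^2\beta^2}/\sqrt{1+\gamma_k^2\beta^2}\le 1$ gives $\phi=1$.
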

The interpretation of this theorem is: (1) as the number of iterations $q$ increase,  the largest canonical angle converges to $0$ quadratically, and (2) a larger singular value gap means that the subspace is computed more accurately.  Comparing this result with~\cref{thm:theta2F}, we see that the upper bound in~\cref{thm:sintheta} has additional factors which depend on the specific norm used. For an arbitrary unitarily invariant norm, there is an additional factor $\max\{1,\sqrt{2}\gamma_k\}/(1-\gamma_k)$.  For the spectral and Frobenius norms, the additional factor is $1/(1-\gamma_k)$.  Both factors are greater than $1$, suggesting that the truncation process can introduce additional error. The additional factor is also independent of the number of iterations $q$, suggesting that it is a one-time price to be paid for the extraction process. The  bound is devastating when $\gamma_k \approx 1$, but this also means that the subspaces may not be well-defined.

\paragraph{Individual singular vectors}
The previous results give insight into the accuracy measured using the canonical angles between the exact and approximate singular subspaces. When individual singular vectors need to be extracted, does the extraction process introduce additional error? The following result quantifies the accuracy of the extraction process.
\begin{theorem}\label{t_singleuv}
Let the approximate singular vectors $\Uh$ and $\Vh$ be computed according to~\cref{alg:randsvd}. With~\cref{ass:main}, we have the following inequalities  
\begin{equation}\label{eqn:singleuv}
 \sin \angle(u_j,\Uh)\leq   {\gamma_{j}^{2q+1} \normtwo{\Omega_2\Omega_1^\dagger}} \qquad 
\sin\angle(v_j,\Vh) \leq  {\gamma_{j}^{2q+2} \normtwo{\Omega_2\Omega_1^\dagger}}
\end{equation}
for $j=1,\dots k$. Denote the approximate singular triplets $(\hat\sigma_j,\hat{u}_j,\hat{v}_j)$ for $j=1,\dots,k$. Under~\cref{ass:main}
\begin{equation}\label{eqn:svs}
 \max\left\{\sin\angle(u_j,\hat{u}_j),\sin\angle(v_j,\hat{v}_j) \right\} \leq \sqrt{1 + 2\frac{\tilde{\gamma}^2}{\tilde\delta^2}} \, \gamma_j^{2q+1}\normtwo{\Omega_2\Omega_1^\dagger}.
\end{equation}
Here, $\tilde\gamma^2 \equiv \normtwo{\Sigma_\perp}^2 + \normtwo{\Sigma_\perp\Omega_2\Omega_1^\dagger}^2 $ and $\tilde\delta \equiv \min\{\min_{\tilde{\sigma}_i\neq \tilde\sigma_j}\{ |\sigma_j - \tilde\sigma_i| ,\sigma_j\} \} $.
\end{theorem}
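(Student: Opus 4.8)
The plan is to establish the two displays in turn, bootstrapping \eqref{eqn:svs} from \eqref{eqn:singleuv}.

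\textbf{The subspace inequalities \eqref{eqn:singleuv}.} I would prove these as the column-wise analogue of the argument behind \cref{thm:u}. From $Y=(AA^*)^qA\Omega$ and the SVD, $Y=U_k\Sigma_k^{2q+1}\Omega_1+U_\perp\Sigma_\perp^{2q+1}\Omega_2$; since $\rank(\Omega_1)=k$, $\Omega_1$ has a right inverse and $Z\equiv Y\Omega_1^\dagger\Sigma_k^{-(2q+1)}$ satisfies $U_k=Z-U_\perp\Sigma_\perp^{2q+1}\Omega_2\Omega_1^\dagger\Sigma_k^{-(2q+1)}$ with $\range(Z)\subseteq\range(Y)=\range(Q)=\range(\Uh)$. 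Left-multiplying by $I-\proj{\Uh}$ annihilates $Z$, so $(I-\proj{\Uh})U_k=-(I-\proj{\Uh})U_\perp\Sigma_\perp^{2q+1}\Omega_2\Omega_1^\dagger\Sigma_k^{-(2q+1)}$; extracting the $j$th column and using $\normtwo{I-\proj{\Uh}}\le1$, $\normtwo{\Sigma_\perp^{2q+1}}=\sigma_{k+1}^{2q+1}$ and $\normtwo{\Sigma_k^{-(2q+1)}e_j}=\sigma_j^{-(2q+1)}$ gives $\sin\angle(u_j,\Uh)=\normtwo{(I-\proj{\Uh})u_j}\le\gamma_j^{2q+1}\normtwo{\Omega_2\Omega_1^\dagger}$. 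Since $\range(\Vh)\supseteq\range(A^*Q)=\range(A^*Y)$ and $A^*Y=(A^*A)^{q+1}\Omega=V\Sigma^{2q+2}V^*\Omega$, the same argument with one extra power of $\Sigma$ gives $\sin\angle(v_j,\Vh)\le\sin\angle(v_j,A^*Q)\le\gamma_j^{2q+2}\normtwo{\Omega_2\Omega_1^\dagger}$. I would also record the residual identity $(A-\Ah)^*u_j=A^*(I-QQ^*)u_j=-(A-\Ah)^*U_\perp\Sigma_\perp^{2q+1}\Omega_2\Omega_1^\dagger\Sigma_k^{-(2q+1)}e_j$ (using $A^*(I-QQ^*)=(A-\Ah)^*$), so that $\normtwo{(A-\Ah)^*u_j}\le\normtwo{A-\Ah}\,\gamma_j^{2q+1}\normtwo{\Omega_2\Omega_1^\dagger}$, together with the deterministic low-rank error bound $\normtwo{A-\Ah}\le\tilde\gamma$ of \cite{HMT09} (valid for all $q\ge0$).

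\textbf{The individual-vector inequality \eqref{eqn:svs}.} Fix $j$, let $\hat u_1,\dots,\hat u_\ell$ and $\hat v_1,\dots,\hat v_\ell$ be orthonormal bases of $\range(\Uh)$ and $\range(\Vh)$, and put $c_i=\hat u_i^*u_j$, $d_i=\hat v_i^*v_j$. Since $\hat{u}_j\in\range(\Uh)$, a Pythagorean computation yields the exact identity $\sin^2\angle(u_j,\hat{u}_j)=\sin^2\angle(u_j,\Uh)+\sum_{\hat\sigma_i\neq\hat\sigma_j}|c_i|^2$, and the analogue for $v_j$ (interpreting the angle in the usual way when $\hat\sigma_j$ is repeated); the first terms are controlled by \eqref{eqn:singleuv}. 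To bound the off-diagonal sums I would use $Av_j=\sigma_ju_j$, $A^*u_j=\sigma_jv_j$, $\Ah=QQ^*A$, $\Ah\hat v_i=\hat\sigma_i\hat u_i$, $\Ah^*\hat u_i=\hat\sigma_i\hat v_i$, $\hat u_i=QQ^*\hat u_i$, and $QQ^*u_j=u_j-(I-QQ^*)u_j$ to derive the coupled identities $\hat\sigma_id_i=\sigma_jc_i$ and $\hat\sigma_ic_i=\sigma_jd_i-\hat v_i^*(A-\Ah)^*u_j$, whence $(\hat\sigma_i^2-\sigma_j^2)c_i=-\hat\sigma_i\,\hat v_i^*(A-\Ah)^*u_j$ and $(\hat\sigma_i^2-\sigma_j^2)d_i=-\sigma_j\,\hat v_i^*(A-\Ah)^*u_j$. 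For $\hat\sigma_i\neq\hat\sigma_j$, $|\hat\sigma_i^2-\sigma_j^2|=|\hat\sigma_i-\sigma_j|(\hat\sigma_i+\sigma_j)\ge\tilde\delta\max\{\hat\sigma_i,\sigma_j\}$ (using $|\hat\sigma_i-\sigma_j|\ge\tilde\delta$ and $\hat\sigma_i+\sigma_j\ge\max\{\hat\sigma_i,\sigma_j\}\ge\tilde\delta$), so $\hat\sigma_i/|\hat\sigma_i^2-\sigma_j^2|\le1/\tilde\delta$ and $\sigma_j/|\hat\sigma_i^2-\sigma_j^2|\le1/\tilde\delta$ (the case $\hat\sigma_i=0$ forces $c_i=0$ by the first identity and is handled directly). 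Thus $|c_i|,|d_i|\le|\hat v_i^*(A-\Ah)^*u_j|/\tilde\delta$, and summing over the orthonormal set $\{\hat v_i\}$ bounds both off-diagonal sums by $\normtwo{(A-\Ah)^*u_j}^2/\tilde\delta^2\le\tilde\gamma^2(\gamma_j^{2q+1}\normtwo{\Omega_2\Omega_1^\dagger})^2/\tilde\delta^2$, via the residual estimate from the first part. With \eqref{eqn:singleuv} and $\gamma_j\le1$ this yields $\max\{\sin^2\angle(u_j,\hat{u}_j),\sin^2\angle(v_j,\hat{v}_j)\}\le(1+\tilde\gamma^2/\tilde\delta^2)(\gamma_j^{2q+1}\normtwo{\Omega_2\Omega_1^\dagger})^2$, which delivers \eqref{eqn:svs} (the stated $\sqrt{1+2\tilde\gamma^2/\tilde\delta^2}$ being a mildly conservative form of $\sqrt{1+\tilde\gamma^2/\tilde\delta^2}$).

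\textbf{Main obstacle.} I expect the delicate part to be the gap step. A naive Davis--Kahan bound treating $u_j,\hat{u}_j$ as eigenvectors of $AA^*,\Ah\Ah^*$ produces a residual scaled by $\sigma_1$ and a denominator of the form $|\hat\sigma_i^2-\sigma_j^2|$, neither of which matches \eqref{eqn:svs}. The remedy is to work with the coupled $(c_i,d_i)$ identities, which keep the residual at scale $\normtwo{A-\Ah}\,\gamma_j^{2q+1}\normtwo{\Omega_2\Omega_1^\dagger}$ --- this is precisely why the column-wise projection-error identity of the first part and the estimate $\normtwo{A-\Ah}\le\tilde\gamma$ are both needed --- while producing only the benign gap $\tilde\delta$ assembled from $\sigma_j$ and the distances $|\sigma_j-\hat\sigma_i|$. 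Some care is also required with the degenerate cases ($\hat\sigma_i=\hat\sigma_j$, $\hat\sigma_i=\sigma_j$, $\hat\sigma_i=0$) and with the possibility that $B=Q^*A$ is rank-deficient, which is handled by using $\range(\Vh)\supseteq\range(A^*Q)$ rather than assuming equality.
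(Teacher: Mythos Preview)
Your argument for \eqref{eqn:singleuv} is essentially the paper's, phrased a bit more directly: the paper works with the projector inequality $\proj{Z}\preceq\proj{U^*\Uh}$ and extracts the $(j,j)$ entry of $H=F^*F(I+F^*F)^{-1}\preceq F^*F$, while you write the column identity $(I-\proj{\Uh})u_j=-(I-\proj{\Uh})U_\perp(\Sigma_\perp\Sigma_\perp^\top)^q\Sigma_\perp\Omega_2\Omega_1^\dagger\Sigma_k^{-(2q+1)}e_j$ and take norms. (One cosmetic point: $\Sigma_\perp$ is $(m-k)\times(n-k)$ and generally not square, so write $(\Sigma_\perp\Sigma_\perp^\top)^q\Sigma_\perp$ rather than $\Sigma_\perp^{2q+1}$.)

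For \eqref{eqn:svs} you take a genuinely different route. The paper simply invokes \cite[Theorem 2.5]{hochstenbach2004harmonic} with $\proj{\mc{U}}=QQ^*$, $\proj{\mc{V}}=I$, which immediately gives
\[
\max\{\sin\angle(u_j,\uh_j),\sin\angle(v_j,\vh_j)\}\le\sqrt{1+2\tilde\gamma'^2/\tilde\delta^2}\,\max\{\sin\angle(u_j,\Uh),\sin\angle(v_j,I)\},
\]
and then bounds $\tilde\gamma'=\normtwo{(I-QQ^*)A}\le\tilde\gamma$ via \cref{thm:lowrank}. Your approach is self-contained: you derive the coupled identities $\hat\sigma_id_i=\sigma_jc_i$ and $\hat\sigma_ic_i=\sigma_jd_i-\hat v_i^*(A-\Ah)^*u_j$, eliminate to get $(\hat\sigma_i^2-\sigma_j^2)c_i$ and $(\hat\sigma_i^2-\sigma_j^2)d_i$ in terms of a single residual, and control the gap by $|\hat\sigma_i^2-\sigma_j^2|\ge\tilde\delta\max\{\hat\sigma_i,\sigma_j\}$. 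Combined with the column-wise residual estimate $\normtwo{(A-\Ah)^*u_j}\le\normtwo{A-\Ah}\sin\angle(u_j,\Uh)$ this reproduces the bound, in fact with the sharper constant $\sqrt{1+\tilde\gamma^2/\tilde\delta^2}$ that you note. The trade-off is clear: the paper's proof is two lines but opaque and imports the factor $2$ from the cited result; yours is longer but transparent, sharper, and makes explicit why only the one-sided residual $(A-\Ah)^*u_j$ enters (because $\proj{\mc V}=I$ kills the other residual in the Hochstenbach framework). Your handling of the degenerate cases ($\hat\sigma_i=\hat\sigma_j$ via the eigenspace interpretation, $\hat\sigma_i=0$ via $c_i=0$ directly) is correct.
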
• 
The first result bounds the angles between the exact singular vector and the corresponding approximate singular subspaces. The second result compares the angles of the exact and the approximate singular vectors. This result also says that the extraction process does not adversely increase the error in the singular subspaces, provided the singular values are well-separated. 
 
The convergence of the individual singular vectors tell a similar story to that of~\cref{thm:u}. The singular vectors corresponding to the largest singular values converge earlier than the singular vectors corresponding to the smaller singular vectors. This is a consequence of the fact that the singular value ratios are non-decreasing. 

\subsection{Comparison with other bounds}

The subspace iteration dates to a 1957 paper by Bauer~\cite{Bauer1957} for eigenvalue problems. The analysis of the subspace iteration has also been well-established, for example, we refer to~\cite[Chapter 14]{Par80}. {Randomized subspace iteration has attracted a lot of attention in the last two decades, with a special emphasis on quantifying the influence of the starting guess $\Omega$. In particular, recent research has focused on the choice of the distribution and the effect of the oversampling parameter $\rho$.  The effect of randomized subspace iteration on the accuracy of singular vectors was studied in the context of spectral clustering in~\cite{boutsidis2015spectral}. However, the authors made the rather strong assumption that $\Omega \in \R^{n\times k}$, which amounts to setting the oversampling parameter $\rho = 0$. This is a strong requirement since~\cref{ass:main} now requires $\Omega_1$ to be invertible.  The authors were able to show (in our notation)
\[ \normtwo{ \sin \angle (U_k,\Uh_k)} \leq \frac{\gamma_k^{2q+1}\normtwo{\Omega_2\Omega_1^{-1}}}{\sqrt{1 + \gamma_k^{4q+2}\normtwo{\Omega_2\Omega_1^{-1}}^2}}.\]
Notice that this bound coincides with~\cref{thm:u} (for $\sin\theta_k$) when $\rho = 0$. Our results provide bounds for the right singular vectors as well as all the canonical angles.

Let us return to this assumption that $\rank(\Omega_1)$. When $\Omega$ is standard Gaussian matrix,~\cite[Theorem 3.3]{sankar2006smoothed} says
 $$\|\Omega_1^{-1}\|_2 \leq \frac{2.35 \sqrt{k}}{\delta}$$
 with probability at least $1-\delta$. For a small probability of failure $0 <\delta < 1$, this bound can be devastating. By contrast, if we let $\Omega_1 \in \C^{k\times (k+\rho)}$ with $\rho \geq 2$, and still suppose that $\Omega$  is a Gaussian random matrix. Then, with probability at least $1-\delta$~\cite[Proposition 10.4]{HMT09} says
$$\normtwo{\Omega_1^\dagger} \leq e\frac{\sqrt{k+\rho}}{\rho} \left(\frac{1}{\delta}\right)^{1/(\rho+1)}. $$ 
It is clear that when the random matrix is Gaussian, oversampling  has an impact on the accuracy of the randomized subspace iteration. Specifically, larger the oversampling, the more accurate is the subspace.  }

Oversampling plays a bigger role for random matrices that have different distributions than Gaussian. When $\Omega$ is generated from the subsampled randomized Hadamard transform (SRHT), or Rademacher distributions, a more aggressive form of oversampling $\ell \sim k\log k$ is necessary to ensure that $\rank(\Omega_1) = k$. Therefore, by allowing for oversampling, our bounds are applicable to starting guesses that are not restricted to Gaussian random matrices. Not only that, our bounds are also informative for matrices with decaying singular values and significant singular value gap.

A recent paper by Nakatsukasa~\cite{nakatsukasa2017accuracy} considered the issue of accuracy of extracting singular subspaces for general projection-based approximation methods. In our notation, these refer to relating bounds for $\angle(U_k,\Uh)$ to $\angle(U_k,\Uh_k)$.  Our bounds for the canonical angles appear to be tighter than the result implied by~\cite[Corollary 1]{nakatsukasa2017accuracy}. This may be because the analysis was applicable to arbitrary subspace projections, whereas ours is specialized to randomized subspace iteration; we do not go into a detailed comparison here. Furthermore, our analysis is able to bound the individual canonical angles which is missing in~\cite{nakatsukasa2017accuracy}.

\subsection{Probabilistic bounds}\label{ssec:prob} Thus far, we
have not made specific assumptions on the matrix $\Omega$, as long as it
satisfies $\rank(\Omega_1) = k$. In particular, $\Omega$ need not be even be random, and
may be deterministic. However, more can be said about the bounds when $\Omega$ is random
is drawn from a specific distribution.

In many applications, the matrix $\Omega \in \R^{n\times (k+\rho)}$ is taken to be the standard Gaussian random matrix. That is, the entries of $\Omega$ are i.i.d.\ $\mc{N}(0,1)$ random variables. Here we derive a few probabilistic results that provide insight into the accuracy of the subspaces. Let $\rho \geq 2$ and define the constant
\begin{equation}
\label{eqn:ce}
 C_e = \sqrt{\frac{k}{\rho-1}} + \frac{e\sqrt{(k+\rho)(n-k)}}{\rho}
 \end{equation}
and for $0 < \delta < 1$ define the constant
\begin{equation}
\label{eqn:cd}
C_d = \frac{e\sqrt{k+\rho}}{\rho+1}\left(\frac{2}{\delta}\right)^{1/(\rho+1)} \left( \sqrt{n-k} + \sqrt{k+\rho} + \sqrt{2\log\frac{2}{\delta}}\right).
\end{equation}
\begin{theorem}[Probabilistic bounds]\label{thm:can_gauss} Let $\Omega \in \mb{R}^{n\times (k+\rho)}$ be a standard  Gaussian random matrix with $\rho \geq 2$. Assume that the singular value ratio $\gamma_k < 1$. Let $\Uh$ and $\Vh$ be obtained from~\cref{alg:randsvd}. For $j=1,\dots,k$, the expected value of the canonical angles satisfy
\[ \mb{E}\, \left[\sin\theta_j \right] \leq \frac{\gamma_{j}^{2q+1}C_e}{\sqrt{1 + \gamma_{j}^{4q+2}C_e^2} } \qquad \mb{E} \, \left[\sin\nu_j \right] \leq \frac{\gamma_{j}^{2q+2}C_e}{\sqrt{1 + \gamma_{j}^{4q+4}C_e^2} }.\]
Let $0 < \delta < 1$ be a user defined failure tolerance. With probability, at least $1-\delta$, the following inequalities hold independently for $j=1,\dots,k$
\[  \sin\theta_{j} \leq \frac{\gamma_{j+1}^{2q+1}C_d}{\sqrt{1 + \gamma_{j}^{4q+2}C_d^2} } \qquad \sin\nu_j \leq \frac{\gamma_{j}^{2q+2}C_d}{\sqrt{1 + \gamma_{j}^{4q+4}C_d^2} }.\]
 \end{theorem}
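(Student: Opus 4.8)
The plan is to reduce everything to the deterministic bounds of \cref{thm:u} applied to the single scalar random variable $X \define \normtwo{\Omega_2\Omega_1^\dagger}$, and then to estimate $X$ (in expectation and in the tail) using standard facts about Gaussian matrices. The first step is to observe that, since $[V_k\ V_\perp]$ has orthonormal columns and $\Omega$ is standard Gaussian, the blocks in~\cref{eqn:omegadef} satisfy: $\Omega_1 = V_k^*\Omega$ and $\Omega_2 = V_\perp^*\Omega$ are \emph{independent} standard Gaussian matrices of sizes $k\times(k+\rho)$ and $(n-k)\times(k+\rho)$. In particular $\rank(\Omega_1) = k$ almost surely, so~\cref{ass:main} holds with probability one (the hypothesis $\gamma_k < 1$ is assumed), and \cref{thm:u} gives $\sin\theta_j \le g_j(X)$ and $\sin\nu_j \le h_j(X)$ almost surely, where $g_j(x) = \gamma_j^{2q+1}x/\sqrt{1+\gamma_j^{4q+2}x^2}$ and $h_j(x) = \gamma_j^{2q+2}x/\sqrt{1+\gamma_j^{4q+4}x^2}$.

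For the expectation bounds I would first record the elementary analytic fact that, on $[0,\infty)$, each of $g_j$ and $h_j$ is nondecreasing and concave: writing $g_j(x) = u/\sqrt{1+u^2}$ with $u = \gamma_j^{2q+1}x$ makes this transparent, since $t \mapsto t/\sqrt{1+t^2}$ has positive, decreasing derivative. Concavity lets me apply Jensen's inequality, $\mathbb{E}[g_j(X)] \le g_j(\mathbb{E} X)$, and monotonicity lets me replace $\mathbb{E} X$ by any upper bound; it therefore suffices to show $\mathbb{E} X = \mathbb{E}\normtwo{\Omega_2\Omega_1^\dagger} \le C_e$. To get this sharp constant I would condition on $\Omega_1$ and write $\Omega_2\Omega_1^\dagger = I_{n-k}\,\Omega_2\,\Omega_1^\dagger$ with $\Omega_2$ Gaussian; the matrix-variance inequality for Gaussian matrices (\cite[Proposition 10.1]{HMT09}) then gives $\mathbb{E}[\normtwo{\Omega_2\Omega_1^\dagger}\mid\Omega_1] \le \normf{\Omega_1^\dagger} + \sqrt{n-k}\,\normtwo{\Omega_1^\dagger}$. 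Taking expectations over $\Omega_1$ and using $\mathbb{E}\normf{\Omega_1^\dagger} \le (\mathbb{E}\normf{\Omega_1^\dagger}^2)^{1/2} = \sqrt{k/(\rho-1)}$ together with $\mathbb{E}\normtwo{\Omega_1^\dagger} \le e\sqrt{k+\rho}/\rho$ (both from \cite{HMT09}) yields exactly $\mathbb{E} X \le C_e$. Substituting into $g_j$ and $h_j$ produces the stated bounds on $\mathbb{E}[\sin\theta_j]$ and $\mathbb{E}[\sin\nu_j]$.

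For the high-probability bounds I would instead control $X$ by a crude tail estimate, $X \le \normtwo{\Omega_2}\,\normtwo{\Omega_1^\dagger}$, and bound each factor outside an event of probability at most $\delta/2$. Concentration of the largest singular value of a Gaussian matrix gives $\normtwo{\Omega_2} \le \sqrt{n-k} + \sqrt{k+\rho} + \sqrt{2\log(2/\delta)}$ off an event of probability at most $\delta/2$, and the tail bound for the pseudoinverse of a Gaussian matrix (\cite[Proposition 10.4]{HMT09}) gives $\normtwo{\Omega_1^\dagger} \le \frac{e\sqrt{k+\rho}}{\rho+1}(2/\delta)^{1/(\rho+1)}$ off another event of probability at most $\delta/2$. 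A union bound then shows $X \le C_d$ with probability at least $1-\delta$, and on that event the monotonicity of $g_j, h_j$ combined with \cref{thm:u} yields the claimed inequalities for all $j = 1,\dots,k$ (weakening $\gamma_j$ to $\gamma_{j+1}$ in the numerator of the $\theta_j$ bound is harmless since $\gamma_j \le \gamma_{j+1}$).

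The analytic lemma on $g_j, h_j$ and the invocations of the Gaussian estimates are routine. The one place that genuinely needs care — and the main obstacle to obtaining the stated constant rather than a coarser one — is the expectation bound $\mathbb{E}\normtwo{\Omega_2\Omega_1^\dagger} \le C_e$: the naive split $\mathbb{E}\normtwo{\Omega_2\Omega_1^\dagger} \le \mathbb{E}\normtwo{\Omega_2}\cdot\mathbb{E}\normtwo{\Omega_1^\dagger}$ introduces a spurious lower-order term of size $\sim (k+\rho)/\rho$ in place of $\sqrt{k/(\rho-1)}$, so one must use the conditioning argument with the Gaussian matrix-variance inequality, treating $\Omega_1^\dagger$ as a fixed matrix, to land on $C_e$ exactly.
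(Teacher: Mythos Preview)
Your proposal is correct and follows essentially the same route as the paper: reduce to \cref{thm:u}, bound $\mathbb{E}\normtwo{\Omega_2\Omega_1^\dagger}\le C_e$ via the conditioning argument from \cite{HMT09}, apply Jensen to the concave map $x\mapsto x/\sqrt{1+\alpha x^2}$ for the expectation bounds, and invoke the high-probability estimate $\normtwo{\Omega_2\Omega_1^\dagger}\le C_d$ (the paper cites \cite{gu2015subspace}; you reconstruct it via the union bound) together with monotonicity for the tail bounds. You are in fact more careful than the paper, which writes ``convex'' where ``concave'' is meant, and you spell out the Gaussian estimates that the paper merely cites.
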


The main message of theorem can be seen from the following bound on the number of subspace iterations $q$.  Specifically, suppose $0 < \epsilon < 1$, and the number of subspace iterations $q$ we take satisfies 
\[ q \geq \frac12 \left( \frac{\log \epsilon/C_e}{\log \gamma_k} -1\right),\]
then $  \mb{E}\, \sin \theta_j \leq \mc{O}(\epsilon^2)$ for $j=1,\dots,k$.

 Several extensions of these results are possible. First, following the proof technique of~\cref{thm:can_gauss}, we can extend the probabilistic analysis to~\cref{thm:theta2F,t_singleuv} as well. Second, following the strategy in~\cite{HMT09}, the probabilistic results can be extended to other distributions. However, we will not pursue these extensions here.

\section{Low-rank approximation and Singular values}\label{sec:aux}

In this section, we provide several structural bounds for the accuracy of the low-rank approximation and the accuracy of the singular values. 

\subsection{Low-rank approximation} Several results are available for
estimating the error in the low-rank approximation $A \approx QQ^*A$ in the
spectral and Frobenius norms, when the matrix $Q$ is obtained from the
randomized subspace
iteration~\cite{HMT09,gu2015subspace,zhang2016randomized}. As was mentioned
earlier, the spectral and Frobenius norms are special cases of the Schatten-p
norm, which are examples of unitarily invariant norms.

Here we present the first known analysis of randomized subspace iteration in a unitarily invariant norm. 
\begin{theorem}\label{thm:lowrank_schatten}
Let $\Ah \in \C^{m\times n}$ be computed using~\cref{alg:randsvd}. Under~\cref{ass:main}, the following inequalities hold in every unitarily invariant norm 
\begin{align}\label{eqn:nnorm}
 \uninorm{(I-QQ^*)A} \leq & \> \uninorm{\Sigma_\perp}+ \gamma_k^{2q}\uninorm{\Sigma_\perp\Omega_2 \Omega_1^\dagger } \\ \label{eqn:nnorm_rankr}
\uninorm{(I-QQ^*)A_k } \leq & \>  \gamma_k^{2q}\uninorm{\Sigma_\perp\Omega_2\Omega_1^\dagger}.
\end{align}•
Let $B =Q^*A$, and let $B_k$ be its best rank$-k$ approximation. If $A$ is approximated using $QB_k$, then the error in the low-rank approximation is
\begin{equation}\label{eqn:nnorm_rankr2}
\uninorm{A-QB_k} \leq  \> \left( 1+ \frac{\sigma_1}{\sigma_k}\frac{\phi\gamma_k^{2q}}{1-\gamma_k} \normtwo{\Omega_2 \Omega_1^\dagger} \right)\uninorm{\Sigma_\perp}.\end{equation}
 As in~\cref{thm:sintheta}, $\phi = 1$ for spectral and Frobenius norms, and $\sqrt{2}$ for an arbitrary unitarily invariant norm. 
\end{theorem}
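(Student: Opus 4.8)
The plan is to establish the three inequalities in sequence, starting from a clean algebraic description of the residual $(I-QQ^*)A$ and then leveraging the earlier structural results. First I would set up coordinates: since $Q$ spans $\range(Y)$ with $Y = (AA^*)^q A\Omega$, and using the SVD partition $A = U_k\Sigma_k V_k^* + U_\perp \Sigma_\perp V_\perp^*$, write $Y = U_k \Sigma_k^{2q+1}\Omega_1 + U_\perp \Sigma_\perp^{2q+1}\Omega_2$. Under \cref{ass:main}, $\rank(\Omega_1) = k$, so $\Sigma_k^{2q+1}\Omega_1$ has full row rank $k$, and one can find a nonsingular postmultiplier so that $\range(Y) = \range\!\big(U_k + U_\perp \Sigma_\perp^{2q+1}\Omega_2\Omega_1^\dagger \Sigma_k^{-(2q+1)}\big)$. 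Abbreviating $H \equiv \Sigma_\perp^{2q+1}\Omega_2\Omega_1^\dagger\Sigma_k^{-(2q+1)}$, we get a matrix $Z = U_k + U_\perp H$ with orthonormal-after-normalization columns spanning $\range(Q)$. This is the standard parametrization used in \cite{HMT09}; the point is that $(I-QQ^*)$ is the orthogonal projector onto the complement of $\range(Z)$, and hence $\uninorm{(I-QQ^*)A} \le \uninorm{(I-\proj{Z})A}$ with equality in fact, and $(I-\proj{Z})$ can be bounded by any particular (not necessarily orthogonal) annihilator of $Z$'s columns.

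For \eqref{eqn:nnorm} and \eqref{eqn:nnorm_rankr}, the key step is to split $A = A_k + A_\perp$ and bound each piece. Since $\range(U_k) \subseteq$ (range of a matrix close to $\range(Z)$), the term $(I-QQ^*)A_k$ is governed entirely by the "tilt" $H$: one shows $\uninorm{(I-QQ^*)A_k} \le \uninorm{H \Sigma_k}= \uninorm{\Sigma_\perp^{2q+1}\Omega_2\Omega_1^\dagger \Sigma_k^{-2q}} \le \normtwo{\Sigma_\perp^{2q}} \normtwo{\Sigma_k^{-2q}}\uninorm{\Sigma_\perp \Omega_2\Omega_1^\dagger} = \gamma_k^{2q}\uninorm{\Sigma_\perp\Omega_2\Omega_1^\dagger}$, using submultiplicativity of $\uninorm{\cdot}$ against the spectral norm and that $\normtwo{\Sigma_\perp}\normtwo{\Sigma_k^{-1}} = \gamma_k$. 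That gives \eqref{eqn:nnorm_rankr}. For \eqref{eqn:nnorm}, write $(I-QQ^*)A = (I-QQ^*)A_k + (I-QQ^*)A_\perp$, apply the triangle inequality, bound the first summand by $\gamma_k^{2q}\uninorm{\Sigma_\perp\Omega_2\Omega_1^\dagger}$ as above, and bound the second by $\uninorm{A_\perp} = \uninorm{\Sigma_\perp}$ since $\norm{I-QQ^*}{2} \le 1$.

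For \eqref{eqn:nnorm_rankr2}, I would decompose $A - QB_k = (A - QQ^*A) + (QQ^*A - QB_k) = (I-QQ^*)A + Q(B - B_k)$, where $B = Q^*A$. The two terms are orthogonal in range (one lies in $\range(Q)^\perp$ component... actually both need care — $(I-QQ^*)A$ has range orthogonal to $\range(Q)$, and $Q(B-B_k)$ has range in $\range(Q)$), so in any unitarily invariant norm we can at least use the triangle inequality, or a Pythagorean-type bound for Schatten norms. The first term is bounded using \eqref{eqn:nnorm}. For the second term, $\uninorm{B - B_k} = \uninorm{\sigma_{k+1}(B),\dots}$ — but the cleaner route is to relate it to the canonical angle bound: $\uninorm{B - B_k} = \uninorm{Q^*A - Q^*QB_k}$ and recognize $QB_k = \Ah_k$, so $A - QB_k = A - \Ah_k$, and then invoke \cref{thm:sintheta} together with the identity $A - \Ah_k = A - \proj{\Uh_k}A + \proj{\Uh_k}A - \proj{\Uh_k}A\proj{\Vh_k}$ type splitting, bounding each factor by $\sin$ of a canonical angle times an appropriate singular-value factor; the $\sigma_1/\sigma_k$ and $\phi/(1-\gamma_k)$ factors come directly from \cref{thm:sintheta}'s bound $\phi\gamma_k^{2q}\normtwo{\Omega_2\Omega_1^\dagger}/(1-\gamma_k)$ scaled by $\normtwo{A} = \sigma_1$ and normalized by $\sigma_k$ from the gap. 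The main obstacle I anticipate is \eqref{eqn:nnorm_rankr2}: getting the multiplicative structure $(1 + (\sigma_1/\sigma_k)\cdot\text{angle bound})\uninorm{\Sigma_\perp}$ rather than an additive bound requires carefully factoring $A - \Ah_k$ through projectors onto $\Uh_k$ and $\Vh_k$ and then feeding in \cref{thm:sintheta} with the correct norm-submultiplicativity steps, tracking where $\phi$ and $1-\gamma_k$ enter, and making sure the arbitrary-unitarily-invariant-norm case only loses the stated $\sqrt{2}$ rather than something larger.
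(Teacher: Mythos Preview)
Your approach to \eqref{eqn:nnorm} and \eqref{eqn:nnorm_rankr} is correct and in fact \emph{more elementary} than the paper's. The paper does not split $A=A_k+A_\perp$ and apply the triangle inequality; instead it writes $\uninorm{(I-QQ^*)A}=\uninorm{(\Sigma^\top(I-\proj{U^*Q})\Sigma)^{1/2}}$, passes to $I-\proj{Z}$ via the Loewner ordering (their \cref{lemma:schatten}), expands $\Sigma^\top(I-\proj{Z})\Sigma$ as a $2\times 2$ block, and invokes an extension of Rotfel'd's theorem for concave functions to separate the diagonal blocks. Your route---use $(I-QQ^*)Z=0$ to rewrite $(I-QQ^*)U_k=-(I-QQ^*)U_\perp H$, then bound $\uninorm{(I-QQ^*)A_k}\le\uninorm{H\Sigma_k}$ and $\uninorm{(I-QQ^*)A_\perp}\le\uninorm{\Sigma_\perp}$---reaches the same bound without any of that machinery. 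The paper's heavier approach pays off only in \cref{thm:lowrank}, where the same block decomposition yields the sharper Pythagorean bound $\sqrt{a^2+b^2}$ for Schatten-$p$; your triangle-inequality route would not recover that. Two small slips: $\Sigma_\perp$ is not square, so write $(\Sigma_\perp\Sigma_\perp^\top)^q\Sigma_\perp$ rather than $\Sigma_\perp^{2q+1}$; and ``with equality in fact'' is wrong when $\rho>0$ since $\range(Z)\subsetneq\range(Q)$, though you never use equality.

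For \eqref{eqn:nnorm_rankr2} your plan is muddled and misses the simplifying identity. The clean fact is $QB_k=\proj{\Uh_k}A$ (not merely $QB_k=\Ah_k$, though both are true): since $\Uh_k=QU_{B,k}$ one has $\Uh_k^*A=U_{B,k}^*B=\Sh_k\Vh_k^*$, hence $\proj{\Uh_k}A=\Uh_k\Sh_k\Vh_k^*=QB_k$. This means $A-QB_k=(I-\proj{\Uh_k})A$ involves only the \emph{left} projector, so your proposed two-sided splitting with $\proj{\Vh_k}$ is unnecessary (indeed the extra term $\proj{\Uh_k}A-\proj{\Uh_k}A\proj{\Vh_k}$ vanishes). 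From here the paper simply splits $A=A_k+A_\perp$, uses $A_k=\proj{U_k}A_k$ and strong sub-multiplicativity to get
\[
\uninorm{(I-\proj{\Uh_k})A}\le \uninorm{(I-\proj{\Uh_k})\proj{U_k}}\,\normtwo{A_k}+\uninorm{A_\perp}
=\uninorm{\sin\angle(U_k,\Uh_k)}\,\sigma_1+\uninorm{\Sigma_\perp},
\]
and plugs in \cref{thm:sintheta}. Your initial decomposition $(I-QQ^*)A+Q(B-B_k)$ is valid but would force you to bound $\uninorm{B-B_k}$ separately, which does not lead as directly to the stated multiplicative form.
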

In this theorem, as the number of iterations $q\rightarrow \infty$, the error in the low-rank approximation goes to zero.

We present a variant of the error in the low-rank approximation for the special case that a Schatten-p norm is used.  The proof for the special case of the Frobenius norm was provided in~\cite{zhang2016randomized}. 
\begin{theorem}\label{thm:lowrank}
Let $\Ah$ be computed using~\cref{alg:randsvd}. Under~\cref{ass:main}, we have
\begin{equation}\label{eqn:2norm}
\schattenp{ (I-QQ^*)A }^2 \leq \schattenp{\Sigma_\perp}^2 + \gamma_k^{4q}\schattenp{\Sigma_\perp\Omega_2\Omega_1^\dagger}^2.
\end{equation}•
\end{theorem}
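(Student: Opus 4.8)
The plan is to follow the standard structural argument for randomized subspace iteration \cite{HMT09}, which reduces the low-rank error to the norm of a block row, and then to sharpen the final estimate by replacing the ordinary triangle inequality used for \cref{thm:lowrank_schatten} with a triangle inequality for the Schatten-$p/2$ norm.

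\textbf{Reduction.} Using the SVD of $A$ together with the partition of $V^*\Omega$ in \cref{eqn:omegadef}, I would first write $Y = (AA^*)^q A\Omega = U_k\Sigma_k^{2q+1}\Omega_1 + U_\perp\Sigma_\perp^{2q+1}\Omega_2$. Since $\rank(\Omega_1)=k$ by \cref{ass:main}, the matrix $Z \equiv Y\,\Omega_1^\dagger\Sigma_k^{-(2q+1)} = U_k + U_\perp F$, with $F \equiv \Sigma_\perp^{2q+1}\Omega_2\Omega_1^\dagger\Sigma_k^{-(2q+1)}$, has full column rank $k$ and satisfies $\range(Z)\subseteq\range(Y)=\range(Q)$. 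Consequently $QQ^*\proj{Z}=\proj{Z}$, hence $(I-QQ^*)(I-\proj{Z})=I-QQ^*$, and in any unitarily invariant norm $\uninorm{(I-QQ^*)A}\le\normtwo{I-QQ^*}\uninorm{(I-\proj{Z})A}\le\uninorm{(I-\proj{Z})A}$. From $(I-\proj{Z})Z=0$ I get $(I-\proj{Z})U_k=-(I-\proj{Z})U_\perp F$, so that
\[ (I-\proj{Z})A = (I-\proj{Z})U_\perp \bmat{-F\Sigma_k & \Sigma_\perp}\bmat{V_k^* \\ V_\perp^*}. \]
Since $\bmat{V_k & V_\perp}$ is unitary and $\normtwo{(I-\proj{Z})U_\perp}\le1$, this gives $\uninorm{(I-QQ^*)A}\le\uninorm{\bmat{-F\Sigma_k & \Sigma_\perp}}$. (Splitting this block row with the ordinary triangle inequality, and using $\uninorm{F\Sigma_k}\le\gamma_k^{2q}\uninorm{\Sigma_\perp\Omega_2\Omega_1^\dagger}$ — obtained by writing $F\Sigma_k = \Sigma_\perp^{2q}(\Sigma_\perp\Omega_2\Omega_1^\dagger)\Sigma_k^{-2q}$ and invoking submultiplicativity with $\normtwo{\Sigma_\perp^{2q}}=\sigma_{k+1}^{2q}$, $\normtwo{\Sigma_k^{-2q}}=\sigma_k^{-2q}$ — already recovers \cref{eqn:nnorm}.)

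\textbf{The squared-norm sharpening.} For the Schatten-$p$ norm I would instead exploit the identity $\schattenp{\bmat{B & C}}^2 = \norm{BB^*+CC^*}{S_{p/2}}$, where $\norm{\cdot}{S_r}$ denotes the Schatten-$r$ norm of a positive semidefinite matrix — this is just the statement that the singular values of the block row, squared, are the eigenvalues of $BB^*+CC^*$. Taking $B=\Sigma_\perp$, $C=-F\Sigma_k$ and using that $\norm{\cdot}{S_{p/2}}$ obeys the triangle inequality for $p\ge2$,
\[ \schattenp{\bmat{-F\Sigma_k & \Sigma_\perp}}^2 \le \norm{\Sigma_\perp\Sigma_\perp^*}{S_{p/2}} + \norm{(F\Sigma_k)(F\Sigma_k)^*}{S_{p/2}} = \schattenp{\Sigma_\perp}^2 + \schattenp{F\Sigma_k}^2. \]
Combining this with $\schattenp{F\Sigma_k}^2 \le \gamma_k^{4q}\schattenp{\Sigma_\perp\Omega_2\Omega_1^\dagger}^2$ (same rewriting of $F\Sigma_k$ as above) yields the asserted bound.

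\textbf{Main obstacle.} The crux is exactly this block-matrix inequality $\schattenp{\bmat{B & C}}^2 \le \schattenp{B}^2 + \schattenp{C}^2$. The naive bound $\schattenp{\bmat{B&C}}\le\schattenp{B}+\schattenp{C}$ only returns \cref{thm:lowrank_schatten} (with its cross term), and the squared form genuinely uses $p\ge2$, since what is being invoked is the triangle inequality in the \emph{Schatten-$p/2$} norm. For $1\le p<2$ the Schatten-$p/2$ functional is only a quasi-norm; one should then use the subadditivity $\norm{X+Y}{S_{p/2}}^{p/2}\le\norm{X}{S_{p/2}}^{p/2}+\norm{Y}{S_{p/2}}^{p/2}$ for positive semidefinite $X,Y$, which delivers the $p$-th-power analogue $\schattenp{(I-QQ^*)A}^p \le \schattenp{\Sigma_\perp}^p + \gamma_k^{2qp}\schattenp{\Sigma_\perp\Omega_2\Omega_1^\dagger}^p$ rather than the squared form, so the statement is cleanest for $p\ge2$. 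The remaining ingredients — the choice of $Z$, the identity $(I-QQ^*)(I-\proj{Z})=I-QQ^*$, and the power-absorption step — are routine.
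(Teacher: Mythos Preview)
Your argument is correct, and it reaches the same endpoint as the paper by a genuinely different route. The paper works with the squared form from the start: it writes $\schattenp{(I-QQ^*)A}^2=\uninorm{\Sigma^\top(I-\proj{U^*Q})\Sigma}_{p/2}$, uses the Loewner ordering $I-\proj{U^*Q}\preceq I-\proj{Z}$ together with \cref{lemma:schatten}, then applies a Rotfel'd--type block inequality~\cite{lee2011extension} to the $2\times2$ block matrix $\Sigma^\top(I-\proj{Z})\Sigma$ in \cref{eqn:block}, and finally the partial-ordering estimates $M_1\preceq F^*F$, $M_2\preceq I$. You instead stay in the ambient space, use the range inclusion $\range(Z)\subseteq\range(Q)$ to pass from $I-QQ^*$ to $I-\proj{Z}$, and exploit the explicit identity $(I-\proj{Z})A=(I-\proj{Z})U_\perp\bmat{-F\Sigma_k & \Sigma_\perp}V^*$; the Schatten-$p/2$ triangle inequality then acts on the $1\times2$ block row directly. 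Both proofs hinge on the same idea---moving to the $p/2$ norm so that the square splits additively---but yours avoids both the Rotfel'd extension and the partial-ordering lemma, at the cost of being slightly less parallel to the proof of \cref{thm:lowrank_schatten}. One cosmetic point: since $\Sigma_\perp\in\C^{(m-k)\times(n-k)}$ is rectangular here, your $\Sigma_\perp^{2q+1}$ should be written $(\Sigma_\perp\Sigma_\perp^\top)^q\Sigma_\perp$ as the paper does; this does not affect the argument.
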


The error bound in~\cref{thm:lowrank_schatten} is weaker than~\cref{thm:lowrank} for the Schatten-p norm  since for $\alpha,\beta \geq
0$, we have $\sqrt{\alpha^2 + \beta^2} \leq \alpha + \beta$. More generally,~\cref{thm:lowrank} is applicable to any unitarily invariant norm that is
also a Q-norm~\cite[Definition IV.2.9]{bhatia1997matrix}. A unitarily invariant norm $\uninorm{\cdot}_Q$ is a $Q$-norm, if there exists
another unitarily invariant norm $\uninorm{\cdot}_a$ such that $\uninorm{A}_Q^2 =
\uninorm{A^*A}_a$. Note that the Schatten-p norms satisfy this property for $p
\geq 2$, since $\schattenp{A}^2 = \uninorm{A^*A}_{p/2}$.

\subsection{Accuracy of singular values} How are the singular values of $A$
related to the singular values of $\Ah$? We now present a result that
quantifies the accuracy of the individual singular values. This result is
similar to~\cite[Theorem 4.3]{gu2015subspace}. Our proof techniques are
substantially different. We make extensive use of the Cauchy interlacing
theorem and the multiplicative singular value inequalities~\cref{eqn:singprod}.
 
\begin{theorem}\label{thm:sigma}
Let $\Ah = \Uh\Sh\Vh^*$ be computed using~\cref{alg:randsvd}. Under~\cref{ass:main}, the
approximate singular values $\sigma_j(\Ah)$ satisfy for $j=1,\dots,k$ 
\[ \sigma_j(A) \geq \sigma_j(\Ah) \geq \frac{\sigma_j(A)}{\sqrt{1 + \gamma^{4q+2}_{j}\normtwo{\Omega_2\Omega_1^\dagger}^2 }}.\]
\end{theorem}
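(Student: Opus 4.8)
The plan is to bound $\sigma_j(\Ah)$ from above and below separately. The upper bound is immediate: since $Q$ has orthonormal columns, $\sigma_j(\Ah)=\sigma_j(QQ^*A)=\sigma_j(Q^*A)$, and $\sigma_j(Q^*A)\le\normtwo{Q^*}\,\sigma_j(A)=\sigma_j(A)$ by the multiplicative singular value inequality~\cref{eqn:singprod}. All the work is in the lower bound, and the idea is the one underlying the structural analyses of~\cite{HMT09}: exhibit a fixed $k$-dimensional subspace contained in $\range(Q)$ on which $A$ still acts essentially like its rank-$k$ part, and then use Cauchy interlacing to pass from that subspace to $\range(Q)$.

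Concretely, first I would write the sketch explicitly. Using the SVD of $A$ and~\cref{eqn:omegadef}, $Y=(AA^*)^qA\Omega = U_k\Sigma_k^{2q+1}\Omega_1+U_\perp\Sigma_\perp^{2q+1}\Omega_2$. Since $\rank(\Omega_1)=k$ (\cref{ass:main}) and $\Sigma_k$ is invertible, $\Sigma_k^{2q+1}\Omega_1$ has full row rank, so $(\Sigma_k^{2q+1}\Omega_1)^\dagger=\Omega_1^\dagger\Sigma_k^{-(2q+1)}$, and the matrix $T\equiv Y(\Sigma_k^{2q+1}\Omega_1)^\dagger = U_k + U_\perp F$, with $F\equiv\Sigma_\perp^{2q+1}\Omega_2\Omega_1^\dagger\Sigma_k^{-(2q+1)}$, satisfies $\range(T)\subseteq\range(Y)\subseteq\range(Q)$ and has full column rank $k$ because $T^*T=I_k+F^*F\succ 0$. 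Picking $\tilde Q$ with orthonormal columns spanning $\range(T)$ and writing $\tilde Q=QW$ with $W^*W=I_k$, we get $\sigma_j(\tilde Q^*A)=\sigma_j\bigl(W^*(Q^*A)\bigr)\le\sigma_j(Q^*A)=\sigma_j(\Ah)$ (this is the interlacing step, i.e. deleting rows cannot increase singular values), so it suffices to lower bound $\sigma_j(\tilde Q^*A)$.

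Next I would compute $\tilde Q^*A$ in closed form. Writing $T=\tilde QR$ with $R^*R=T^*T=I_k+F^*F$, one finds $\tilde Q^*U_k=R^{-*}T^*U_k=R^{-*}$ and $\tilde Q^*U_\perp=R^{-*}F^*$, hence $\tilde Q^*A=R^{-*}\bmat{\Sigma_k & F^*\Sigma_\perp}V^*$ and, using $F^*\Sigma_\perp^2F\succeq 0$ together with monotonicity of eigenvalues in the semidefinite order,
\[ \sigma_j(\tilde Q^*A)^2=\lambda_j\!\left(R^{-*}\bigl(\Sigma_k^2+F^*\Sigma_\perp^2F\bigr)R^{-1}\right)\ge\lambda_j\!\left(R^{-*}\Sigma_k^2R^{-1}\right)=\lambda_j\!\left(\Sigma_k(I_k+F^*F)^{-1}\Sigma_k\right). \]
Passing to the inverse, $\lambda_j\bigl(\Sigma_k(I_k+F^*F)^{-1}\Sigma_k\bigr)=1/\lambda_{k-j+1}\bigl(\Sigma_k^{-2}+(F\Sigma_k^{-1})^*(F\Sigma_k^{-1})\bigr)$, so everything reduces to an upper bound on $\lambda_{k-j+1}$ of this $k\times k$ Hermitian matrix.

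Finally I would carry out that bound, which is where the $\gamma_j$ (rather than merely $\gamma_k$) dependence is won. A crude estimate $\normtwo{F}\le\gamma_k^{2q+1}\normtwo{\Omega_2\Omega_1^\dagger}$ only delivers $\gamma_k$ in the denominator; to get $\gamma_j$ one must exploit the diagonal structure of $\Sigma_k$. By Courant–Fischer (equivalently Cauchy interlacing applied to the leading $j\times j$ principal submatrix), $\lambda_{k-j+1}(N)\le\lambda_{\max}(N_{1:j,1:j})$ for Hermitian $N$, and then Weyl gives $\lambda_{k-j+1}\bigl(\Sigma_k^{-2}+(F\Sigma_k^{-1})^*(F\Sigma_k^{-1})\bigr)\le\sigma_j^{-2}+\normtwo{(F\Sigma_k^{-1})_{:,1:j}}^2$. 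Since $(F\Sigma_k^{-1})_{:,1:j}=\Sigma_\perp^{2q+1}\Omega_2\Omega_1^\dagger\,(\Sigma_k^{-(2q+2)})_{:,1:j}$ and the leading $j$ columns of the diagonal matrix $\Sigma_k^{-(2q+2)}$ have spectral norm $\sigma_j^{-(2q+2)}$, submultiplicativity yields $\normtwo{(F\Sigma_k^{-1})_{:,1:j}}^2\le\sigma_{k+1}^{4q+2}\sigma_j^{-(4q+4)}\normtwo{\Omega_2\Omega_1^\dagger}^2=\gamma_j^{4q+2}\normtwo{\Omega_2\Omega_1^\dagger}^2/\sigma_j^2$. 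Chaining the inequalities gives $\sigma_j(\Ah)^2\ge\sigma_j(A)^2/\bigl(1+\gamma_j^{4q+2}\normtwo{\Omega_2\Omega_1^\dagger}^2\bigr)$, which is the claim. I expect the bookkeeping around this leading-submatrix/Weyl step to be the main obstacle: it is precisely where the index $j$ enters, and one has to check carefully that restricting $F\Sigma_k^{-1}$ to its first $j$ columns interacts correctly with the decreasing diagonal weights of $\Sigma_k$.
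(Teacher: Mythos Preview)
Your proposal is correct and follows the same skeleton as the paper: both arguments reduce the lower bound to $\sigma_j(\Ah)^2 \ge \lambda_j\bigl(\Sigma_k(I+F^*F)^{-1}\Sigma_k\bigr)$ via the same auxiliary $k$-column object (your $T$ is exactly $UZ$ in the paper's notation, and your row-deletion interlacing is the paper's $\proj{Z}\preceq\proj{U^*Q}$ followed by Cauchy interlacing on the $(1,1)$ block of $\Sigma^\top\proj{Z}\Sigma$). The only genuine difference is the final step, where you pass to the inverse and bound $\lambda_{k-j+1}\bigl(\Sigma_k^{-2}+(F\Sigma_k^{-1})^*(F\Sigma_k^{-1})\bigr)$ by Cauchy interlacing on the leading $j\times j$ principal submatrix plus Weyl. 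This is correct, but it is precisely the bookkeeping you worry about. The paper avoids it entirely by staying in the Loewner order: writing $F=C\Sigma_k^{-(2q+1)}$ with $C=(\Sigma_\perp\Sigma_\perp^\top)^q\Sigma_\perp\Omega_2\Omega_1^\dagger$ and using $C^*C\preceq\sigma_{k+1}^{4q+2}\normtwo{\Omega_2\Omega_1^\dagger}^2 I$ gives $F^*F\preceq\normtwo{\Omega_2\Omega_1^\dagger}^2\Gamma_k^{4q+2}$ with $\Gamma_k=\diag(\gamma_1,\dots,\gamma_k)$. Because this upper bound is diagonal and commutes with $\Sigma_k$, operator monotonicity of $(I+\cdot)^{-1}$ and conjugation yield $\Sigma_k(I+F^*F)^{-1}\Sigma_k\succeq\Sigma_k(I+\normtwo{\Omega_2\Omega_1^\dagger}^2\Gamma_k^{4q+2})^{-1}\Sigma_k$, and reading off the $j$-th eigenvalue of the diagonal right-hand side delivers every $\gamma_j$ at once with no submatrix restriction. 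One minor slip: for non-square $A$ your $\Sigma_\perp^{2q+1}$ should be $(\Sigma_\perp\Sigma_\perp^\top)^q\Sigma_\perp$, matching the paper's $F$; nothing downstream changes.
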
•
It can be readily seen that the large singular values are computed more accurately since the singular value ratio corresponding to larger singular values is smaller. 

Rather than quantify the accuracy of the individual singular values, the next results are of the Hoffman-Wielandt type and account for all the singular values together. Define the two matrices of conformal sizes
\[ \Sigma = \bmat{\Sigma_k \\ & \Sigma_\perp} \qquad \Sigma' = \bmat{\Sh \\ & 0}.\]
Under~\cref{ass:main}, the error in the singular values satisfies
\begin{equation}
 \uninorm{\Sigma-\Sigma'} \leq \uninorm{\Sigma_\perp}  + \gamma_k^{2q} \uninorm{\Sigma_\perp\Omega_2\Omega_1^\dagger} .
\end{equation}
The proof combines~\cite[III.6.13]{bhatia1997matrix} with~\cref{thm:lowrank_schatten}. For the Schatten-p norm, with $p\geq 2$, we can derive the bound
\begin{equation}\schattenp{\Sigma-\Sigma'} \leq \sqrt{\schattenp{\Sigma_\perp}^2  + \gamma_k^{4q} \schattenp{\Sigma_\perp\Omega_2\Omega_1^\dagger}^2} .
\end{equation}
The proof is similar, and is therefore omitted.

\section{Proofs}\label{sec:proofs}
We recall some results here that will be useful in our analysis, see~\cite[Section 7.7]{HoJ13} for proofs.  Let $M,N$ be Hermitian positive definite. The notation $M \preceq N$ means $N-M $ is positive semi-definite and it defines a partial ordering on the set of Hermitian matrices. Clearly, this also implies $I -N \preceq I -M$. The partial order is preserved under the conjugation rule. That is
\[ SMS^* \preceq SNS^* \qquad \forall \> S \in \mb{C}^{m\times n}. \]
Weyl's theorem implies that the eigenvalues satisfy  $\lambda_j(M) \leq \lambda_j(N)$ for all $j=1,\dots,n$. If additionally, $M,N$ are both positive semidefinite then $M^{1/2} \preceq N^{1/2}$~\cite[Proposition V.1.8]{bhatia1997matrix} and $(I+N)^{-1} \preceq (I+M)^{-1}$.  

  \paragraph{Singular value inequalities} Let $A,B \in \C^{m\times n}$. For all $i,j$ such that $1 \leq i,j \leq \min\{m,n\}$ and $i+j-1\leq \min\{m,n\}$, the following singular value inequalities hold for the sum $A+B$~\cite[Equation 7.3.13]{HoJ13}
 \begin{equation}
\sigma_{i+j-1}(A+B) \leq \sigma_i(A) + \sigma_j(B),
\end{equation}
and product $AB^*$~\cite[Equation (7.3.14)]{HoJ13}
\begin{equation}\label{eqn:singprod}
\sigma_{i+j-1}(AB^*)\leq \sigma_i(A)\sigma_j(B).
\end{equation}•
 A useful corollary of these results is that $\sigma_i(A+B) \leq \sigma_i (A) + \sigma_1(B)$ and $\sigma_i(AB^*) \leq \sigma_i(A) \sigma_1(B)$ for $i=1,\dots,\min\{m,n\}$.

\paragraph{Unitarily invariant norms} {It is useful to recall some properties of the unitarily invariant norms. Every unitarily invariant norm $\uninorm{\cdot}$ on $\C^n$ is associated with a symmetric gauge function on $\R^n$. The  $\uninorm{\cdot}$  satisfies $\uninorm{M} = \uninorm{(M^*M)^{1/2}}$, since both matrices have the same nonzero singular values. The following inequality for unitarily invariant norms, also known as strong sub-multiplicativity, will be useful~\cite[(IV.40)]{bhatia1997matrix}  
$$\uninorm{ABC} \leq \normtwo{A}\normtwo{C}\uninorm{B}.$$

We will need the following lemma
 \begin{lemma}\label{lemma:schatten}
 Let $A, B, D \in \C^{n\times n} $ such that  $A,B$ Hermitian and  $0 \preceq A \preceq B$, then 
 \[ \uninorm{(D^*AD)^{1/2}} \leq \uninorm{(D^*BD)^{1/2}}.\]
 \end{lemma}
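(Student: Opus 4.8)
The plan is to reduce the inequality to a coordinatewise comparison of the (ordered) singular values of the two matrices, and then to invoke the monotonicity of the symmetric gauge function associated with $\uninorm{\cdot}$.

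First, since $0 \preceq A \preceq B$, the conjugation rule recalled above gives $0 \preceq D^*AD \preceq D^*BD$; in particular both $D^*AD$ and $D^*BD$ are Hermitian positive semidefinite, so their square roots are well defined. Next, apply Weyl's monotonicity theorem to $D^*AD \preceq D^*BD$: for every $j$,
\[ \lambda_j(D^*AD) \leq \lambda_j(D^*BD). \]
Because both matrices are positive semidefinite, their eigenvalues (in decreasing order) coincide with their singular values, so $\sigma_j(D^*AD) \leq \sigma_j(D^*BD)$ for all $j$. Taking square roots and using $\sigma_j\big((D^*AD)^{1/2}\big) = \sqrt{\sigma_j(D^*AD)}$ (and likewise for $B$),
\[ \sigma_j\big((D^*AD)^{1/2}\big) \leq \sigma_j\big((D^*BD)^{1/2}\big), \qquad j=1,\dots,n. \]

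Finally, recall that every unitarily invariant norm can be written as $\uninorm{M} = \Phi\big(\sigma_1(M),\dots,\sigma_n(M)\big)$ for a symmetric gauge function $\Phi$, and that symmetric gauge functions are monotone: $0 \leq x_j \leq y_j$ for all $j$ implies $\Phi(x) \leq \Phi(y)$. Applying this with $x_j = \sigma_j\big((D^*AD)^{1/2}\big)$ and $y_j = \sigma_j\big((D^*BD)^{1/2}\big)$ gives $\uninorm{(D^*AD)^{1/2}} \leq \uninorm{(D^*BD)^{1/2}}$. (Equivalently, the ordered inequality above forces $\sum_{j=1}^{\ell}\sigma_j\big((D^*AD)^{1/2}\big) \leq \sum_{j=1}^{\ell}\sigma_j\big((D^*BD)^{1/2}\big)$ for every $\ell$, so the conclusion follows from Ky Fan's dominance theorem.)

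The computation itself is short; the only substantive ingredient is the monotonicity of unitarily invariant norms under the coordinatewise (ordered-singular-value) order, which I would cite carefully from Bhatia~\cite[Chapter IV]{bhatia1997matrix} (the characterization by symmetric gauge functions together with Fan dominance). No delicate estimates arise, so I anticipate no real obstacle beyond invoking those standard facts in the correct form; the one place to be slightly careful is to note that positive semidefiniteness is what lets us identify eigenvalues with singular values and hence pass from $D^*AD \preceq D^*BD$ to the ordered singular-value inequality.
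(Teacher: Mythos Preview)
Your proof is correct and follows essentially the same approach as the paper: conjugate to get $D^*AD \preceq D^*BD$, pass to ordered eigenvalue (hence singular value) inequalities via Weyl, take square roots, and conclude via Fan dominance. The paper phrases the last step directly in terms of Ky-Fan-$k$ norms and the Fan dominance theorem, while you also mention the equivalent symmetric-gauge-function monotonicity, but the arguments are the same.
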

 
 \begin{proof}
 Combining the properties of the partial ordering, the eigenvalues of the scaled matrices satisfy $\lambda_j(D^*AD)^{1/2} \leq \lambda_j(D^*BD)^{1/2}$ for all $j=1,\dots,n$. Since the matrices are positive semidefinite, the eigenvalues are the singular values and $\|(D^*AD)^{1/2}\|_{(k)} \leq \|(D^*BD)^{1/2}\|_{(k)}$ for every Ky-Fan-k norm $k=1,\dots,n$. By the Fan dominance theorem~\cite[Theorem IV.2.2]{bhatia1997matrix}, the advertised inequality is true for every unitarily invariant norm.
 \end{proof}
}

\subsection{Proofs of~\cref{ssec:angles} Theorems}
\begin{proof}[\cref{thm:u}] We tackle each case separately. \\
\textbf{Bounds for $\sin\theta_j$}: The proof is lengthy and proceeds in four steps. We give a great level of detail here, since the proof technique will be applicable to the subsequent proofs.
\paragraph{1. Converting an SVD to an EVD} We compute the thin SVD of $(I-\proj{\Uh})U_k = K S_U G^*.$ The matrix $$S_U  = \diag(\sin\theta_k,\dots,\sin\theta_1) \in \mb{R}^{k\times k}$$ 
contains the sine of the canonical angles between the subspaces spanned by the columns of $\Uh$ and $U_k$~\cite[Equation (13)]{bjorck1973numerical}. It is readily seen that
 \begin{equation}\label{eqn:su} G S_U^2 G^* = U_k^*(I-\proj{\Uh})U_k.\end{equation}

\paragraph{2. Shrinking space} In~\cref{alg:randsvd}, we had defined $Y = (AA^*)^qA\Omega$. It follows that 
\[ U^*Y = \bmat{\Sigma_k^{2q+1} \\ & (\Sigma_\perp\Sigma_\perp^\top)^q\Sigma_\perp} (V^*\Omega)= \begin{bmatrix} \Sigma_k^{2q+1} \Omega_1 \\ (\Sigma_\perp\Sigma_\perp^\top)^q\Sigma_\perp \Omega_2\end{bmatrix},\]
where from~\cref{eqn:omegadef}, $\Omega_1 = V_k^*\Omega$ and $\Omega_2 = V_\perp^*\Omega$. Next, by~\cref{ass:main}, $\Omega_1$ has full row rank and therefore it has a right multiplicative inverse. Define
\[ Z \equiv U^*Y\Omega_1^\dagger \Sigma_k^{-(2q+1)} = \bmat{I \\ F}• \qquad F \equiv (\Sigma_\perp\Sigma_\perp^\top)^q\Sigma_\perp\Omega_2\Omega_1^\dagger\Sigma_k^{-(2q+1)}. \]
{Recall that  $Y= QR$ is the thin-QR factorization of $Y$. Let $Q_1R_1$ be the thin-QR factorization of $R\Omega_1^\dagger\Sigma_k^{-(2q+1)}$; here, $Q_1 \in \mb{C}^{(k+\rho) \times k}, R_1\in \mb{C}^{k\times k}$.

From $Q_1Q_1^*\preceq I$, the conjugation rule implies 
\[
\proj{Z} = U^*QQ_1Q_1^*Q^*U \preceq U^*QQ^*U = \proj{U^*Q}.
\]
Since
$\range(U^*Y) = \range(U^*Q) = \range(U^*\Uh)$, they have the same projectors, so 
\begin{equation}\label{eqn:projz}\proj{Z} \preceq \proj{U^*\Uh} \qquad I - \proj{U^*\Uh} \preceq I - \proj{Z}.\end{equation}

Plug in $UU^* = I$ into~\eqref{eqn:su}, and use~\eqref{eqn:projz} to obtain
\[ U_k^*(I-\proj{\Uh})U_k = U_k^*U (I-\proj{U^*\Uh}) U^*U_k  \preceq  \bmat{ I & 0 }(I-\proj{Z}) \bmat{I \\ 0}.\]
}

\paragraph{3. Simplifying $\proj{Z}$} 
Since  $\proj{Z} = ZZ^\dagger$, we have
\[ \proj{Z} = \bmat{I \\ F} (I + F^*F)^{-1} \bmat{I & F^*}, \]
from which, it can be readily seen that 
\begin{align} \nonumber
 \bmat{ I & 0 }(I-\proj{Z}) \bmat{I \\ 0} = & \> I - (I+F^*F)^{-1} \\ \label{eqn:H}
= &  \> F^*F (I+F^*F)^{-1}\equiv H.
\end{align}
Note that $H$ is positive semidefinite. To summarize the story so far, $GS_U^2 G^* \preceq  H$. 
\paragraph{4. Applying singular value inequalities}  A straightforward SVD argument shows that the $j$-th singular value of $ H$ satisfies
\[  \sigma_j(H) = \sigma_j^2(F)/(1+\sigma_j^2(F)) \quad j=1,\dots,k. \] 
The singular value inequalities~\cref{eqn:singprod} imply 
\[ \sigma_j(F) \leq \sigma_1( \Sigma_\perp\Sigma_\perp^\top)^q\Sigma_\perp\Omega_2\Omega_1^\dagger)\sigma_j(\Sigma_k^{-2q-1}) \leq \left(\frac{\sigma_{k+1}}{\sigma_{k-j+1}}\right)^{2q+1} \normtwo{\Omega_2\Omega_1^\dagger} .\]
Plugging this inequality into $\sigma_j(H)$ 
\[ \sigma_j^2(H) \> \leq  \>  \frac{\gamma_{k-j+1}^{4q+2} \normtwo{\Omega_2\Omega_1^\dagger}^2}{1 + \gamma_{k-j+1}^{4q+2} \normtwo{\Omega_2\Omega_1^\dagger}^2}\, \qquad j=1,\dots,k. \]
Since $GS_U^2 G^* \preceq  H$, Weyl's theorem implies $\sin^2 \theta_{k-j+1} \leq \sigma_j^2(H)$. 
Take square roots on both sides and rename $j \leftarrow k-j+1$ to get the desired result.

\textbf{Bounds for $\sin\nu_j$}:  Let $GS_V^2G^*$ be the eigenvalue decomposition of $V_k^*(I-\proj{\Vh})V_k$. Note that the diagonals of $S_V$ are the sine of the canonical angles $\angle(V_k,\Vh)$. Since $\Vh$
is obtained from the thin SVD of  $A^*Q$, $\range(A^*Q) =
\range(\Vh)$ and $\proj{\Vh} = \proj{A^*Q}$, since an orthogonal projection
matrix is uniquely determined by the range. Next, consider $\Zh$ defined as 
\begin{equation}\Zh \equiv \Sigma^\top U^*Y\Omega_1^\dagger \Sigma_k^{-2q-2} = \begin{bmatrix}I \\ \Fh\end{bmatrix}• \qquad \Fh \define (\Sigma_\perp^\top\Sigma_\perp)^{q+1}\Omega_2\Omega_1^\dagger\Sigma_k^{-2q-2}, \label{zhat}
\end{equation}• 
from $(AV)^*Q= \Sigma^*U^*Q $, it can be verified that 
\[ \range(\Zh ) \subset \range(\Sigma^\top U^*Y) = \range(\Sigma^\top U^*Q) = \range((AV)^*Q). \]
Using an argument similar to~\cref{eqn:projz}, we obtain
\[ V_k^*V(I-\proj{\Vh})V^* V_k   \preceq V_k^*V(I-\proj{\Zh})V^* V_k = \bmat{I & 0} (I-\proj{\Zh}) \bmat{I \\0}. \] 
The right hand side simplifies to $I - (I +\Fh^*\Fh)^{-1}$. The rest of the proof is similar to that of the proof for $\sin\theta_j$.
\end{proof}


\begin{proof}[\cref{thm:theta2F}]
With the notation of \cref{thm:u}, we follow steps 1-3 of the proof to obtain $$GS_U^2 G^* \preceq  H \preceq F^*F.$$
Since the square root preserves partial ordering, implies $GS_U G^* \preceq   (F^*F)^{1/2}$. {Note that $(F^*F)^{1/2}$ and $F$ have the same nonzero singular values. Therefore, $$\uninorm{\sin\angle(U_k,\Uh)} \leq\uninorm{(F^*F)^{1/2}} = \uninorm{F}.$$ 
By using strong sub-multiplicativity of the unitarily invariant norm, we have
$$\uninorm{\sin\angle(U_k,\Uh)} \leq \gamma_k^{2q}\normtwo{\Omega_2\Omega_1^\dagger} \frac{\uninorm{\Sigma_\perp}}{\sigma_k}.$$
} 
\end{proof}

\subsection{Proofs of~\cref{ssec:sintheta} Theorems}


\begin{proof}[\cref{thm:gensintheta}]
Let $X = (I-\proj{\Uh_k})\proj{U_k}$ and $Y = (I-\proj{\Vh_k})\proj{V_k}$. In decreasing order, the singular values of $X$ and $Y$ are $\{ \sin \theta_j'\}_{j=1}^k$ and $ \{\sin \nu_j'\}_{j=1}^k$ respectively. Let $B \equiv \Ah - \Ah_k$. First, we observe that
\begin{align*}
E_{12} = & \> (I-\proj{\Uh_k})(A - \Ah)\proj{V_k} \\ 
= & \> (I-\proj{\Uh_k})A_k - (I-\proj{\Uh_k}) \Ah \proj{V_k} \\
= &\> (I-\proj{\Uh_k})\proj{U_k}A_k - (\Ah - \Ah_k)\proj{V_k} \\
=& \> XA_k - B(I-\proj{\Vh_k})\proj{V_k} =XA_k - BY.
\end{align*}•
A similar calculation shows that $E_{21} = X^*B - A_kY^*$. From the first relation, since $\rank(A) \geq k$, we have 
\[ XA_k A_k^\dagger = (E_{12} + BY) A_k^\dagger.\]
But $A_k A_k^\dagger = \proj{U_k}$ and $X\proj{U_k} = X$. Applying~\cref{eqn:singprod}, we have  
\[ \sigma_j(X) \leq (\|E_{12}\|_2  + \|B\|_2\|Y\|_2)/\sigma_{k-j+1}(A) \qquad j=1,\dots,k.\]
A similar argument gives
\[\sigma_j(Y) \leq (\normtwo{E_{21}}  + \normtwo{B}\normtwo{X})/\sigma_{k-j+1}(A) \qquad j=1,\dots,k. \]
Combining these relations 
\[ \max \{ \sigma_j(X), \sigma_j(Y)\} \leq \frac{\max\{ \|E_{21}\|_2, \|E_{12}\|_2\}}{\sigma_{k-j+1}(A)}  + \frac{\|B\|_2}{\sigma_{k-j+1}(A)} \max\{\|X\|_2,\|Y\|_2\}. \]
Recognize that $\|B\|_2 = \sigma_{k+1}(\Ah)$. 
Applying~\cref{eqn:sintheta} in the spectral norm simplifies the expression since 
\[ \frac{1}{\sigma_{k-j+1}(A)}\left(1  + \frac{\sigma_{k+1}(\Ah)}{\sigma_{k}(A) - \sigma_{k+1}(\Ah)}\right) = \frac{\sigma_k(A)}{\sigma_{k-j+1}(A)(\sigma_{k}(A) - \sigma_{k+1}(\Ah))}. \]
Therefore, 
\[\max \{ \sigma_j(X), \sigma_j(Y)\} \leq \frac{\sigma_k(A)}{\sigma_{k-j+1}(A)}\frac{\max\{ \normtwo{E_{21}}, \normtwo{E_{12}}\}}{\zeta}.\]
Now $\sigma_j(X) = \sin\theta_{k-j+1}'$ and $\sigma_j(Y) = \sin\nu_{k-j+1}'$. Rename $j\leftarrow k-j+1$ to finish. 
\end{proof}

\begin{proof}[\cref{thm:sintheta}] We tackle each case independently. \\
\textbf{Unitarily invariant norms}: Our proof involves simplifying each term in~\cref{eqn:sintheta}, and~\cref{eqn:e1} and has several steps.
\paragraph{1. Simplifying the gap} Recall $\zeta = \sigma_k(A) - \sigma_{k+1}(\Ah)$ and $\Ah = QQ^*A$. From the first part of~\cref{thm:sigma} 
\[ \zeta = \sigma_k(A) - \sigma_{k+1}(\Ah) \geq \sigma_k(A) - \sigma_{k+1}(A).\]
	\paragraph{2. Simplifying $\uninorm{E_{12} }$} First observe that $A\proj{V_k} = A_k$. So $$E_{12} =(I-\proj{\Uh_k})(I-QQ^*)A\proj{V_k} =  (I-\proj{\Uh_k})(I-QQ^*)A_k.$$ Then applying~\cref{eqn:nnorm_rankr} along with sub-multiplicativity gives
\[ \uninorm{E_{12} } \leq \uninorm{(I-QQ^*)A_k}  \leq \gamma_k^{2q}\uninorm{\Sigma_\perp\Omega_2\Omega_1^\dagger } \leq \gamma_k^{2q} \uninorm{\Sigma_\perp}  \normtwo{\Omega_2\Omega_1^\dagger }. \]
\paragraph{3. Simplifying $\uninorm{E_{21} }$} First, $E_{21} = \proj{U_k}(I-QQ^*)A\proj{\Vh_k}$, and since $\normtwo{\proj{U_k}(I-QQ^*)} = \normtwo{\sin\angle(U_k,\Uh)}$, 
\[ \uninorm{E_{21} } \leq \normtwo{\sin\angle(U_k,\Uh)} \uninorm{ (I-QQ^*)A},\]
because of strong sub-multiplicativity. Applying \cref{thm:u} and~\cref{eqn:nnorm_rankr}
\[  \uninorm{E_{21}} \leq \frac{\gamma_k^{2q+1} \normtwo{\Omega_2\Omega_1^\dagger}}{\sqrt{1+\gamma_k^{4q+2} \| \Omega_2\Omega_1^\dagger\|_2^2}} \left(1  + \gamma_k^{2q}\normtwo{\Omega_2\Omega_1^\dagger} \right)\uninorm{\Sigma_\perp}.\]
Let $\beta = \gamma_k^{2q}\normtwo{\Omega_2\Omega_1^\dagger}$. Then for $\beta \geq 0$, since $\gamma_k < 1$
\[ \frac{\gamma_k(1+\beta)}{\sqrt{1+\gamma_k^2\beta^2}} \leq \frac{1+\gamma_k\beta}{\sqrt{1+\gamma_k^2\beta^2}} \leq \sqrt{2}. \] 
Therefore, $\uninorm{E_{21}} \leq \sqrt{2}\gamma_k^{2q} \uninorm{\Sigma_\perp} \|\Omega_2\Omega_1^\dagger\|_2  $.
\paragraph{4. Putting everything together} Plugging in the intermediate quantities into~\cref{eqn:sintheta}, we have
 \[ \max\left\{ \uninorm{ \sin \angle(U_k,\Uh_k)} ,\uninorm{\sin\angle(V_k,\Vh_k)} \right\} \leq \sqrt{2}\gamma_k^{2q} \normtwo{\Omega_2\Omega_1^\dagger} \frac{ \uninorm{\Sigma_\perp}}{\sigma_{k}-\sigma_{k+1}}. \]
Dividing the numerator and denominator by $\sigma_k$ proves the stated result for unitarily invariant norms. 

\textbf{Spectral/Frobenius norms}: Let $\norm{\cdot}{\xi}$ denote the spectral and Frobenius norms. The first two steps are identical to the proof for unitarily invariant norms. For the third step, using~\cref{eqn:nnorm_rankr}
\[ \norm{E_{21}}{\xi} \leq \frac{\gamma_k^{2q+1} \| \Omega_2\Omega_1^\dagger\|_2}{\sqrt{1+\gamma_k^{4q+2} \| \Omega_2\Omega_1^\dagger\|_2^2}} \norm{\Sigma_\perp}{\xi} \sqrt{1+\gamma_k^{4q} \normtwo{\Omega_2\Omega_1^\dagger}^2}.\]
With $\beta$ defined as before, since $\gamma_k < 1$,  ${\sqrt{\gamma_k^2+\gamma_k^2\beta^2}}/{\sqrt{1+\gamma_k^2\beta^2}} \leq  1$.
Therefore, $$\norm{E_{21}}{\xi} \leq \gamma_k^{2q} \norm{\Sigma_\perp}{\xi}\normtwo{\Omega_2\Omega_1^\dagger}.$$ The rest of the proof is the same.

\textbf{Canonical angles}: The proof combines~\cref{thm:gensintheta} with the above analysis for the spectral norm. The right hand side contains the term 
$$\max\left\{ \normtwo{ \sin \angle(U_k,\Uh_k)} ,\normtwo{\sin\angle(V_k,\Vh_k)} \right\}.$$ 
The rest of the proof involves some simple manipulations.
\end{proof}


\begin{proof}[\cref{t_singleuv}]
We first address~\cref{eqn:singleuv}. Following the steps of the proof of~\cref{thm:u}, we have 
\[ \sin^2 \angle (u_j,\Uh) = u_j^*U(I-\proj{U^*Q})U^*u_j \preceq \bmat{e_j^\top & 0 } (I - \proj{Z}) \bmat{e_j \\ 0}, \]
where $e_j$ is the $j$--th  column of the $k\times k$ identity matrix. Therefore, we have $\sin^2 \angle (u_j,\Uh) \leq e_j^\top He_j$,
where $H$ was defined in~\cref{eqn:H}. The inequality $H \preceq F^*F$ implies
\begin{align*}
  \sin^2 \angle (u_j,\Uh)  
 \leq& \>\sigma_{j}^{-4q-2}  \normtwo{(\Sigma_\perp\Sigma_\perp^\top)^q\Sigma_\perp(\Omega_2\Omega_1^\dagger)e_j}^2 \\
 \leq & \>  \gamma_{j}^{4q+2}\normtwo{\Omega_2\Omega_1^\dagger}^2. 
\end{align*}
Taking square-roots on both sides gives the desired results. The strategy for bounding the canonical angles $\sin\angle (v_j,\Vh)$ is very similar and will be omitted.

We now address~\cref{eqn:svs}, which is a straightforward application of~\cite[Theorem 2.5]{hochstenbach2004harmonic}. Let $\proj{\mc{U}} = QQ^*$ and $\proj{\mc{V}} = I$. Then, in our notation, this result takes the form
\[ \max\left\{\sin\angle(u_j,\uh_j),\sin\angle(v_j,\vh_j)\right\} \leq \sqrt{1 + 2\frac{\tilde\gamma'^2}{\tilde\delta^2}}\max\left\{\sin\angle(u_j,\Uh),\sin\angle(v_j,I)\right\}. \]
where $\tilde\gamma' = \max\{0, \normtwo{(I-QQ^*)A}\}$ and $\tilde\delta$ is as defined in the statement of the theorem. \cref{thm:lowrank} for the spectral norm implies $\tilde\gamma'\leq \tilde\gamma$, whereas \cref{t_singleuv} implies 
\[ \max\left\{\sin\angle(u_j,\Uh),\sin\angle(v_j,I)\right\} \leq \gamma_j^{2q}\normtwo{\Omega_2\Omega_1^\dagger}.\]
Plug in the intermediate steps to obtain the desired bound.  
\end{proof}


\begin{proof}[\cref{thm:can_gauss}]
In~\cref{thm:u}, bounds for $\normtwo{\Omega_2\Omega_1^\dagger}$ are available in the literature. From the proof of~\cite[Theorem 10.6]{HMT09} we find the inequality
$$\mb{E}\,\normtwo{\Omega_2\Omega_1^\dagger} \leq C_e,$$ where the constant $C_e$ was defined in~\cref{eqn:ce}. Let $\alpha > 0$ be a constant. The map $x \mapsto x/\sqrt{1+\alpha x^2}$ is convex. Therefore, by Jensen's inequality the results in expectation follow. 

For the concentration inequalities,~\cite[Theorem 5.8]{gu2015subspace} showed that $\normtwo{\Omega_2\Omega_1^\dagger} \leq C_d$ with a probability at least $1-\delta$. Here, $C_d$ was defined in~\cref{eqn:cd}. Plug into~\cref{thm:u} to obtain the desired bounds.
\end{proof}


\subsection{Proofs of~\cref{sec:aux} Theorems}

\begin{proof}[\cref{thm:lowrank_schatten}]  

\textbf{Proof of~\cref{eqn:nnorm}}:
Using the unitary invariance of the norms
\[ \uninorm{(I-\proj{Q})A} = \uninorm{(I-\proj{U^*Q})\Sigma}  = \uninorm{(\Sigma^\top(I-\proj{U^*Q})\Sigma)^{1/2}}.\]
 We  use~\cref{eqn:projz} combined with \cref{lemma:schatten} to obtain 
 $$\uninorm{(\Sigma^\top(I-\proj{U^*Q})\Sigma)^{1/2}} \leq \uninorm{(\Sigma^\top(I-\proj{Z})\Sigma)^{1/2}}.$$
With $M_1 \equiv I-(I+F^*F)^{-1}$ and $M_2 \equiv I - F(I+F^*F)^{-1}F^*$, then $\Sigma^\top(I-\proj{Z})\Sigma$ simplifies as
\begin{equation}\label{eqn:block} \Sigma^\top (I-\proj{Z})\Sigma = \bmat{\Sigma_k M_1\Sigma_k & * \\ * &  \Sigma_\perp^\top M_2\Sigma_\perp}.\end{equation}
The square root function is  concave on $[0,\infty)$ and $\Sigma^\top(I-\proj{Z})\Sigma$ is positive semidefinite. Therefore, an extension to Rotfel'd's theorem says~\cite[Theorem 2.1]{lee2011extension} 
\[ \uninorm{(\Sigma^\top(I-\proj{Z})\Sigma)^{1/2}} \leq \uninorm{(\Sigma_k^\top M_1\Sigma_k)^{1/2}} +  \uninorm{(\Sigma_\perp^\top M_2\Sigma_\perp)^{1/2}}. \] 
{
Use the inequalities $M_1 \preceq F^*F$ and $M_2\preceq I$, along with \cref{lemma:schatten} gives 
\begin{equation}
\begin{split}
 \uninorm{(I-\proj{Q})A} \leq & \>  \uninorm{(\Sigma_kF^*F\Sigma_k)^{1/2}} + \uninorm{(\Sigma_\perp\Sigma_\perp)^{1/2}}\\  
\leq & \> \uninorm{F\Sigma_k} + \uninorm{\Sigma_\perp}.
\end{split}
\end{equation}
Use $F\Sigma_k = (\Sigma_\perp\Sigma_\perp)^q\Sigma_\perp \Omega_2\Omega_1^\dagger \Sigma_{k}^{-2q}$ and the sub-multiplicativity to obtain the advertised bounds.}

\textbf{Proof of~\cref{eqn:nnorm_rankr}}: The proof for~\cref{eqn:nnorm_rankr} is similar and is omitted. The main observation is that $A_k$ has only $k$
nonzero singular values.

\textbf{Proof of~\cref{eqn:nnorm_rankr2}}: We follow the strategy in~\cite[Section 3.3]{drineas2018low}. {Recall that $B_k$ is the best rank-$k$ approximation to $B = Q^*A$. With the notation in~\cref{alg:truncation}, note that 
$$QB_k = Q\Uh_{B,k} \Uh_{B,k}^*B = \Uh_k \Uh_{k}^*A = \proj{\Uh_k}A,$$ the triangle inequality gives   
\[ \uninorm{(I-\proj{\Uh_k})A} \leq \uninorm{(I-\proj{\Uh_k})A_k} + \uninorm{(I-\proj{\Uh_k})A_{\perp}}.  \]
}
Since $A_k = \proj{U_k}A_k$, applying strong sub-multiplicativity 
\[ \uninorm{(I-\proj{\Uh_k})A} \leq \uninorm{(I-\proj{\Uh_k})\proj{U_k}} \normtwo{A_k} + \uninorm{A_{\perp}}.\] 
We recognize that $\uninorm{(I-\proj{\Uh_k})\proj{U_k}} = \uninorm{\sin\angle(U_k,\Uh_k)}$, apply~\cref{thm:sintheta} to complete the proof. 
\end{proof}
\begin{proof}[\cref{thm:lowrank}]
The proof is similar to that of the proof of~\cref{thm:lowrank_schatten}. Consider the term of interest $\schattenp{ (I-QQ^*)A}^2$, which can be simplified to 
\[ \schattenp{ (I-QQ^*)A }^2 = \uninorm{A^*(I-QQ^*) A}_{p/2} = \uninorm{ \Sigma^\top (I - \proj{U^*Q}) \Sigma}_{p/2}.\] 
The first equality holds only for $p\geq 2$, whereas the last equality follows because of the unitary invariance.  As in the proof of~\cref{thm:u}, we have 
 \[ Z = U^*Y\Omega_1^\dagger \Sigma_k^{-(2q+1)}\qquad F = (\Sigma_\perp \Sigma_\perp^\top)^q \Sigma_\perp\Omega_2 \Omega_1^\dagger \Sigma_k^{-(2q+1)}.\]
The use of~\cref{eqn:projz} and~\cref{lemma:schatten} ensures 
 \[\uninorm{ \Sigma^\top (I - \proj{U^*Q}) \Sigma}_{p/2} \leq \uninorm{\Sigma^\top (I - \proj{Z}) \Sigma}_{p/2}.\]
 {We apply~\cite[Theorem 2.1]{lee2011extension} to~\cref{eqn:block} with $f(t) =t$ to obtain
 \begin{equation*}
\begin{aligned}\uninorm{\Sigma^\top (I - \proj{Z}) \Sigma}_{p/2} \leq & \> \uninorm{\Sigma_k M_1 \Sigma_k}_{p/2} +\uninorm{\Sigma_\perp^\top M_2  \Sigma_\perp}_{p/2}\\
\leq & \>\uninorm{\Sigma_k F^*F \Sigma_k}_{p/2} + \uninorm{\Sigma_\perp^\top  \Sigma_\perp}_{p/2}\\
= & \> \schattenp{F\Sigma_k}^2 + \schattenp{\Sigma_\perp}^2. \end{aligned} \end{equation*}
We have used $M_1 \preceq F^*F$ and $M_2 \preceq I$.} The rest of the proof is similar to that of~\cref{thm:lowrank_schatten}. 
\end{proof}


\begin{proof}[\cref{thm:sigma}]
{The proof makes heavy use of the partial ordering which was reviewed in the start of \cref{sec:proofs}.} From the inequality $I \succeq QQ^*$, the conjugation rule gives $$A^*A \succeq A^*QQ^*A.$$ Then, Weyl's theorem implies $\lambda_j(A^*A) \geq \lambda_j(A^*QQ^*A)$ for $j=1,\dots,k$. Relating the eigenvalues to the singular values proves the first inequality.

For the second inequality consider again $A^*QQ^*A$. With the aid of~\cref{eqn:projz} 
\begin{equation}\label{eqn:inters} A^*QQ^*A = V\Sigma^\top\proj{U^*Q}\Sigma V^* \succeq V\Sigma^\top\proj{Z}\Sigma V^*. \end{equation}
Therefore, $\lambda_j(A^*QQ^*A) \geq \lambda_j(V\Sigma^\top\proj{Z}\Sigma V^*)$ for $j =1,\dots,k$. Since $V\Sigma^\top\proj{Z}\Sigma V^*$ and $\Sigma^\top\proj{Z}\Sigma$ are similar, they share the same eigenvalues. It can be readily shown that 
\[ \Sigma^\top\proj{Z}\Sigma = \bmat{ \Sigma_k (I+F^*F)^{-1}\Sigma_k & * \\ * & *} .\]
For $j=1,\dots,k$, the eigenvalues of $A^*Q^*QA$ satisfy 
\begin{equation}\label{eqn:ordering} \lambda_j(A^*QQ^*A)  \geq  \lambda_j(V\Sigma^\top\proj{Z}\Sigma V^*) \geq  \lambda_j(\Sigma_k (I+F^*F)^{-1}\Sigma_k).
\end{equation}
{The second inequality follows from the Cauchy interlacing theorem~\cite[Section 10-1]{Par80}.  Applying the properties of partial ordering, we obtain
\[ F^*F \preceq \sigma_{k+1}^{4q+2} \normtwo{\Omega_2\Omega_1^\dagger}^2 \Sigma_k^{-(4q+2)} = \normtwo{\Omega_2\Omega_1^\dagger}^2 \Gamma_k^{4q+2}, \]
where $\Gamma_k = \diag(\gamma_1,\dots,\gamma_k)$ is a diagonal matrix with the singular value gaps. Furthermore, 
\[ \Sigma_k (I+F^*F)^{-1}\Sigma_k \succeq \Sigma_k(I + \normtwo{\Omega_2\Omega_1^\dagger}^2\Gamma_k^{4q+2})^{-1}\Sigma_k.\]
Since the diagonal matrix on the right hand side has its singular values on the diagonals; this fact, combined with~\cref{eqn:ordering} gives for $j=1,\dots,k$
 \[  \sigma_j^2(Q^*A) = \lambda_j(A^*QQ^*A)  \geq  \lambda_j(\Sigma_k (I+F^*F)^{-1}\Sigma_k) \geq \frac{\sigma_j^2(A)}{1 + \normtwo{\Omega_2\Omega_1^\dagger}^2 \gamma_{j}^{4q+2}}.\]
 Taking square-roots, we obtain the desired result.
}
\end{proof}

\section{Numerical Results}

\subsection{Test matrices}\label{ssec:test} To demonstrate the performance of the bounds, we use the following test matrices
\begin{enumerate}
\item \textbf{Controlled gap} The first set of test matrices $A \in \R^{3000\times 300}$ are constructed using the formula
\[ A = \sum_{j=1}^{r}\frac{\text{gap}}{j} \, x_j y_j^\top +
  \sum_{j=r+1}^{300}\frac{1}{j} \,x_j y_j^\top,\]
where $x_j \in \R^{3000}$ and $y_j \in \R^{300}$ are sparse random vectors with non-negative entries generated using the MATLAB commands \verb|sprand(3000,1,0.025)| and \verb|sprand(300,1,0.025)| respectively. The formula above is not an SVD, since the vectors do not form an orthonormal set. Nonetheless, the singular values decay like $1/j$ and the gap between the singular values between $15$ and $16$ is controlled by the parameter $\text{gap}$. We consider three cases:  
\begin{enumerate}
\item Small gap (GapSmall) $\text{gap} = 1$,
\item Medium gap (GapMedium) $\text{gap} = 2$,
\item Large gap (GapLarge) $\text{gap} = 10$.
\end{enumerate}•

\begin{figure}[!ht]\centering
\includegraphics[scale=0.25]{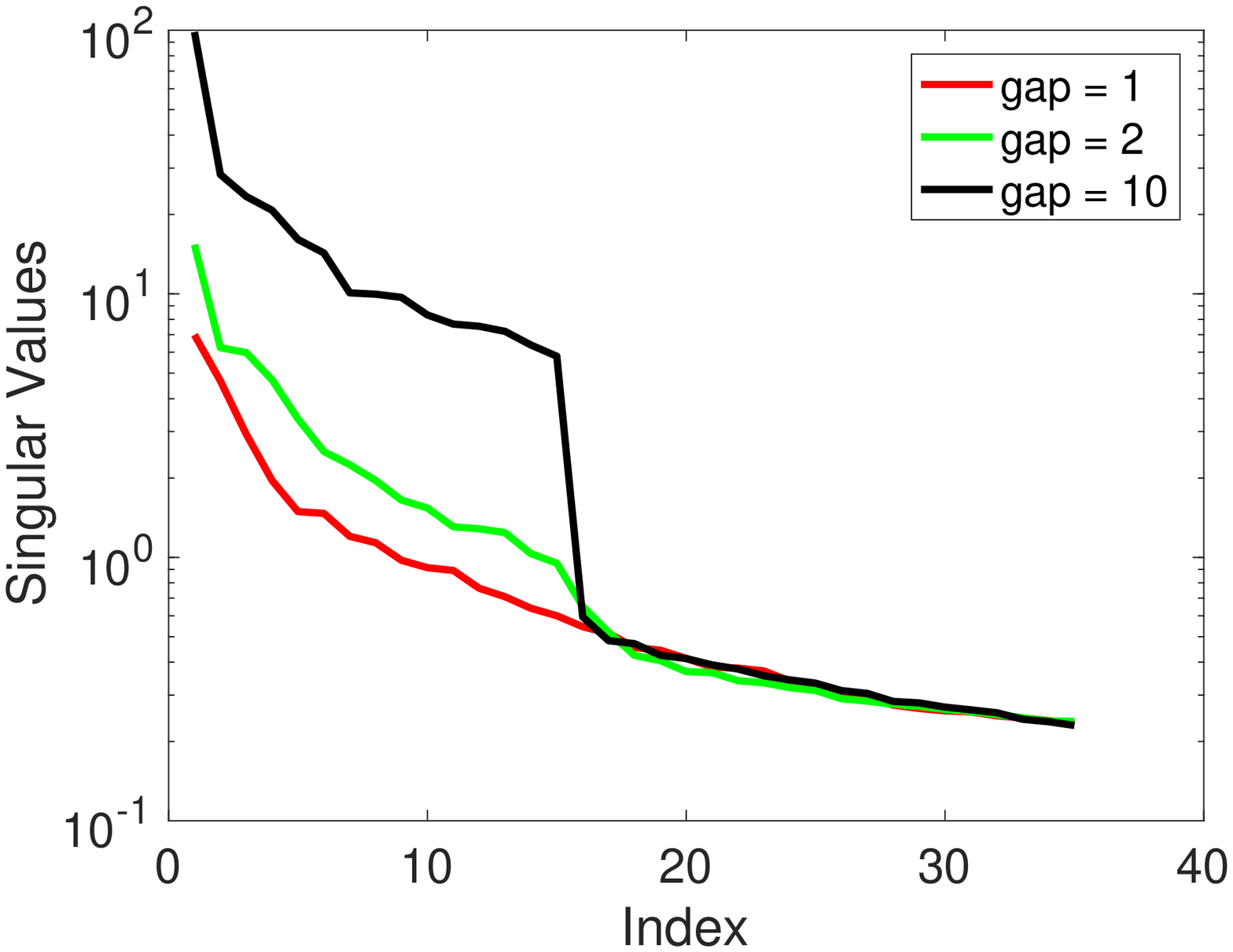}
\includegraphics[scale=0.25]{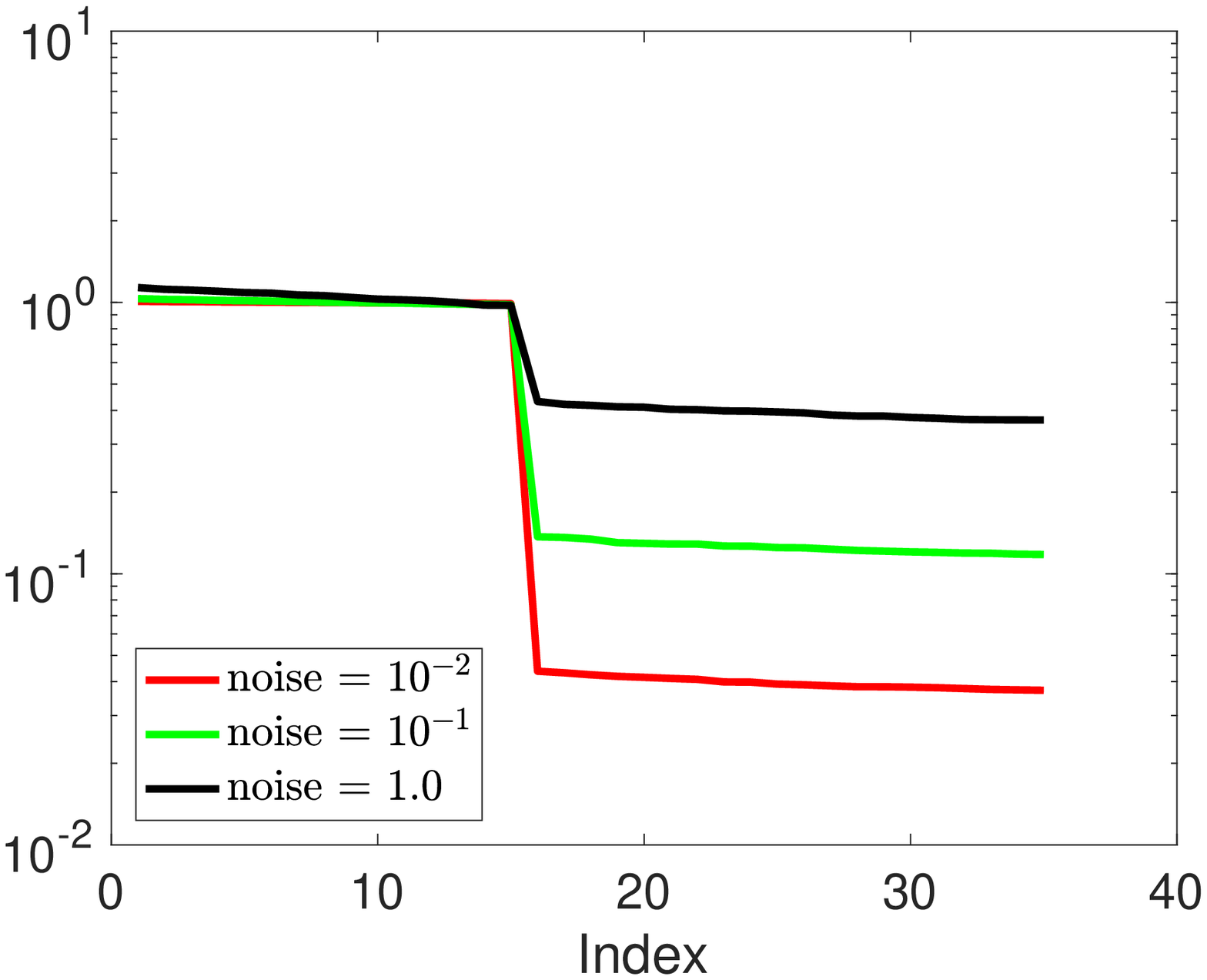}
\includegraphics[scale=0.25]{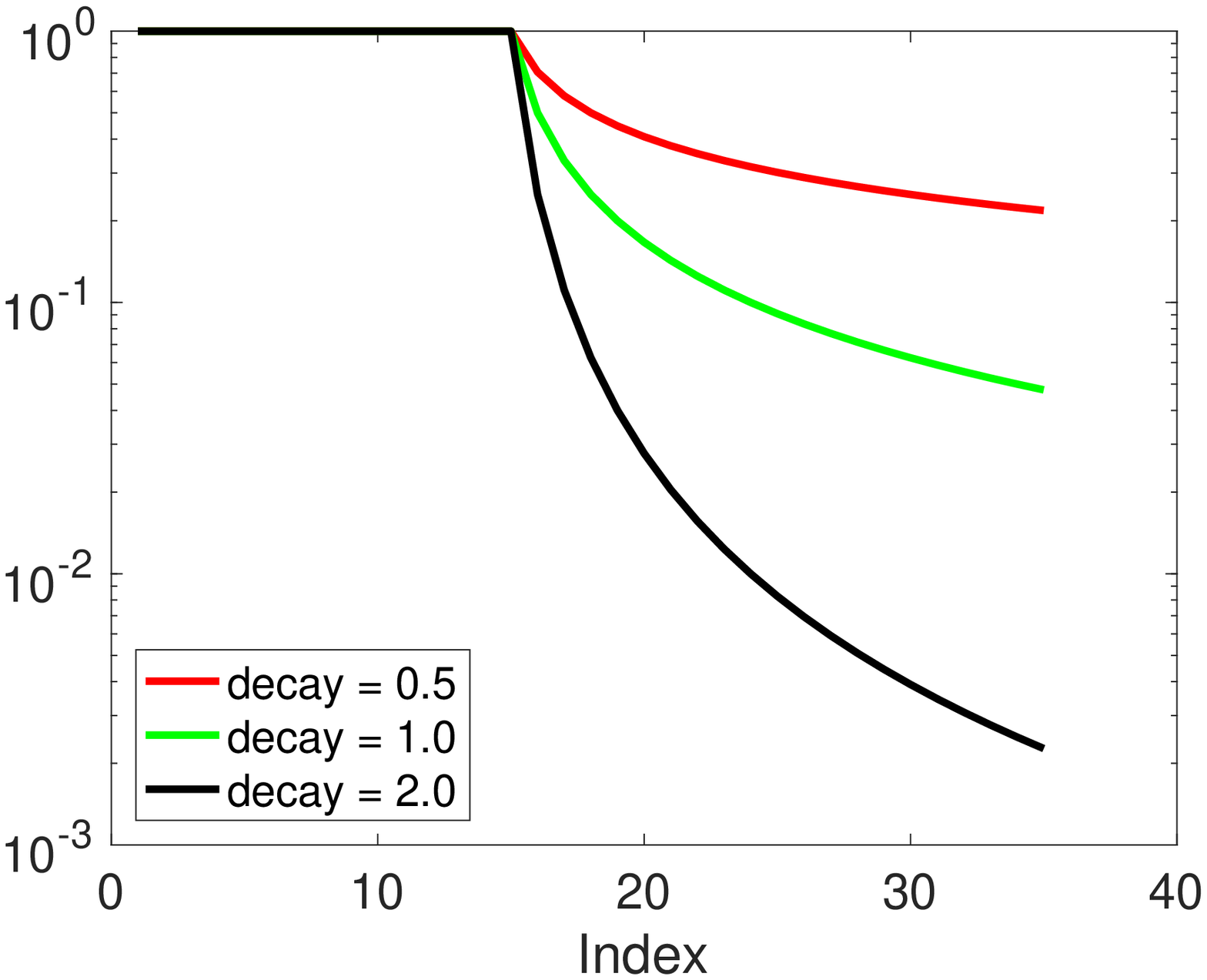}
\caption{Singular value of the matrices from the (left) `Controlled Gap' example, (right) `Low-rank plus noise' example, (below) `Low-rank plus decay' example.  }
\label{fig:svs}
\end{figure}•

\item \textbf{Low-rank plus noise} The matrices are of the form
\[ A = \begin{bmatrix} I_r & 0 \\ 0 & 0\end{bmatrix} + \sqrt{\frac{\gamma_n r}{2n^2}}(G + G^\top),\]
where $G \in \R^{n\times n}$ is a random Gaussian matrix. 
We consider three cases:  
\begin{enumerate}
\item Small noise (NoiseSmall) $\gamma_n = 10^{-2}$,
\item Medium noise (NoiseMedium) $\gamma_n = 10^{-1}$,
\item Large noise (NoiseLarge) $\gamma_n = 1$.
\end{enumerate}•
\item \textbf{Low-rank plus decay} The matrices take the form
\[ A = U\diag(\underbrace{1,1,\dots,1}_r,2^{-d},3^{-d}\dots,(n-r+1)^{-d})V^*.\]
The unitary matrices $U,V$ are obtained by drawing a random Gaussian matrix, and taking its QR factorization. We distinguish between the following cases
\begin{enumerate}
\item Slow decay (DecaySlow): $d = 0.5$,
\item Medium decay (DecayMedium): $d = 1.0$,
\item Fast decay  (DecayFast): $d=2.0$.
\end{enumerate}
\end{enumerate}
The first example is adapted from~\cite{sorensen2016deim}, whereas the second and third examples are drawn from~\cite{tropp2017practical}. In all the examples, the random matrices were fixed by setting the random seed and we the set the parameter $r=15$. The singular values of all the test matrices are plotted in~\cref{fig:svs}. 
\begin{figure}[!ht]%
\centering
\subfigure[GapSmall]{%
\label{fig:cana1}%
\includegraphics[scale=0.24]{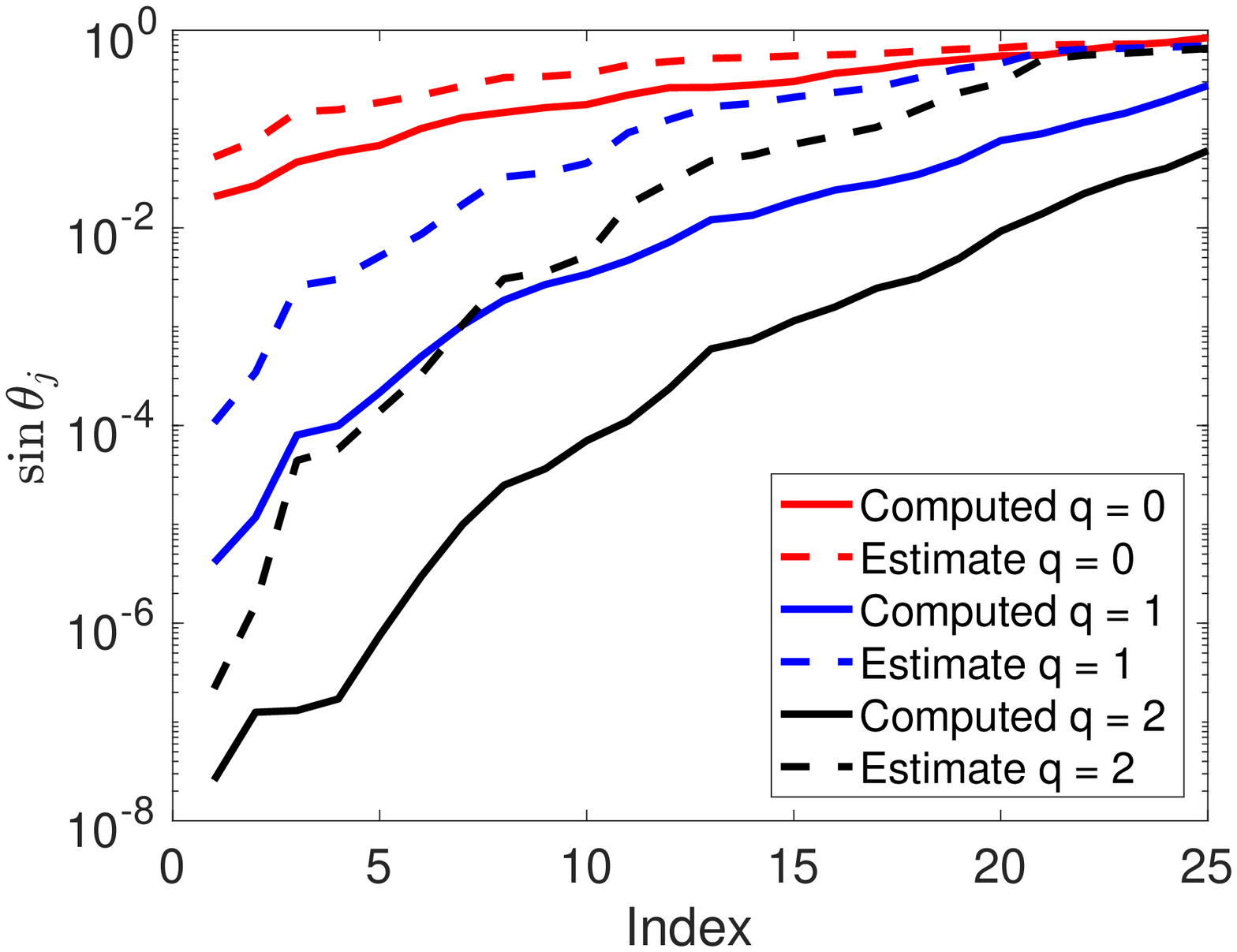}}%
\subfigure[GapMedium]{%
\label{fig:canb1}%
\includegraphics[scale=0.24]{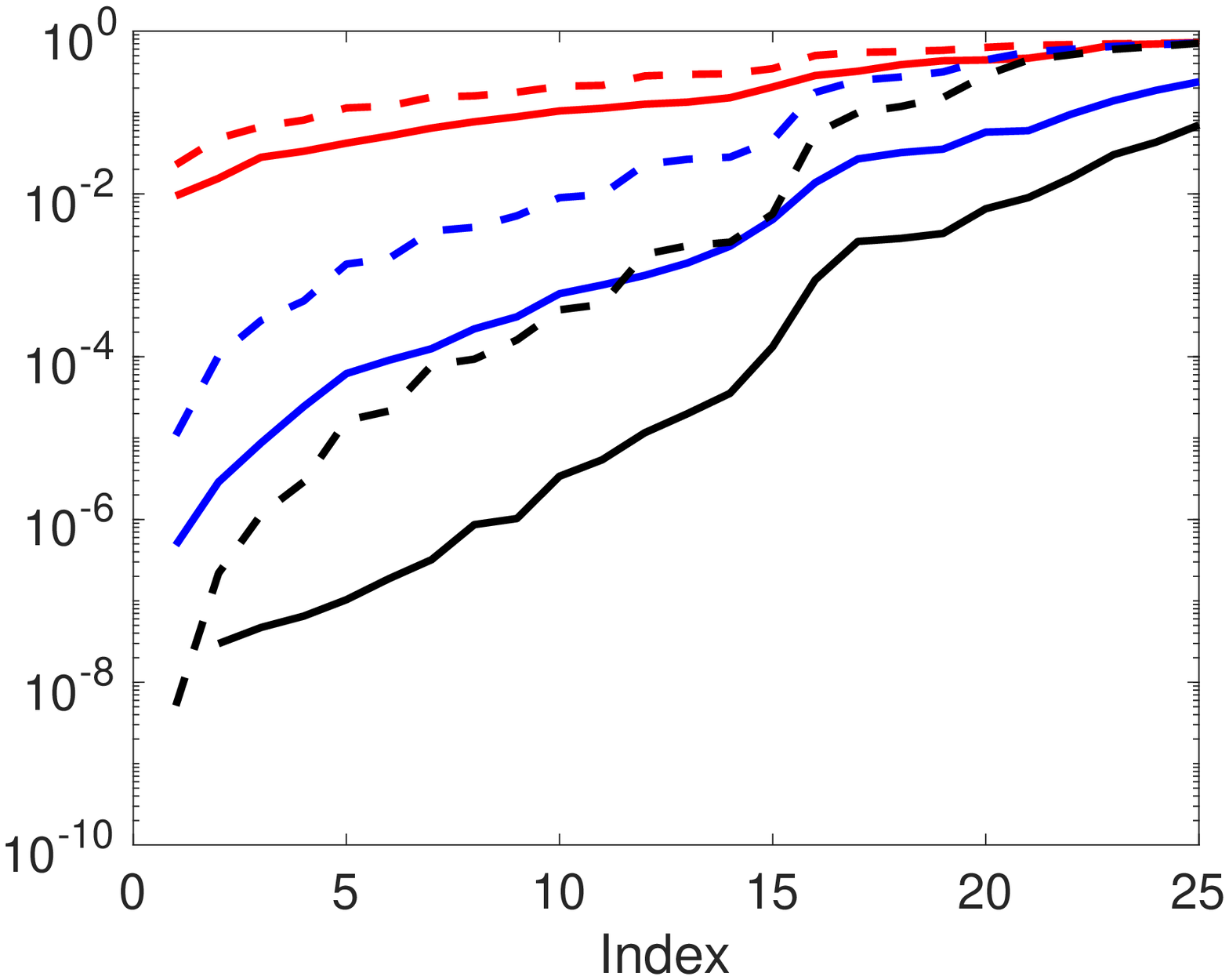}}%
\subfigure[GapLarge]{%
\label{fig:canc1}%
\includegraphics[scale=0.24]{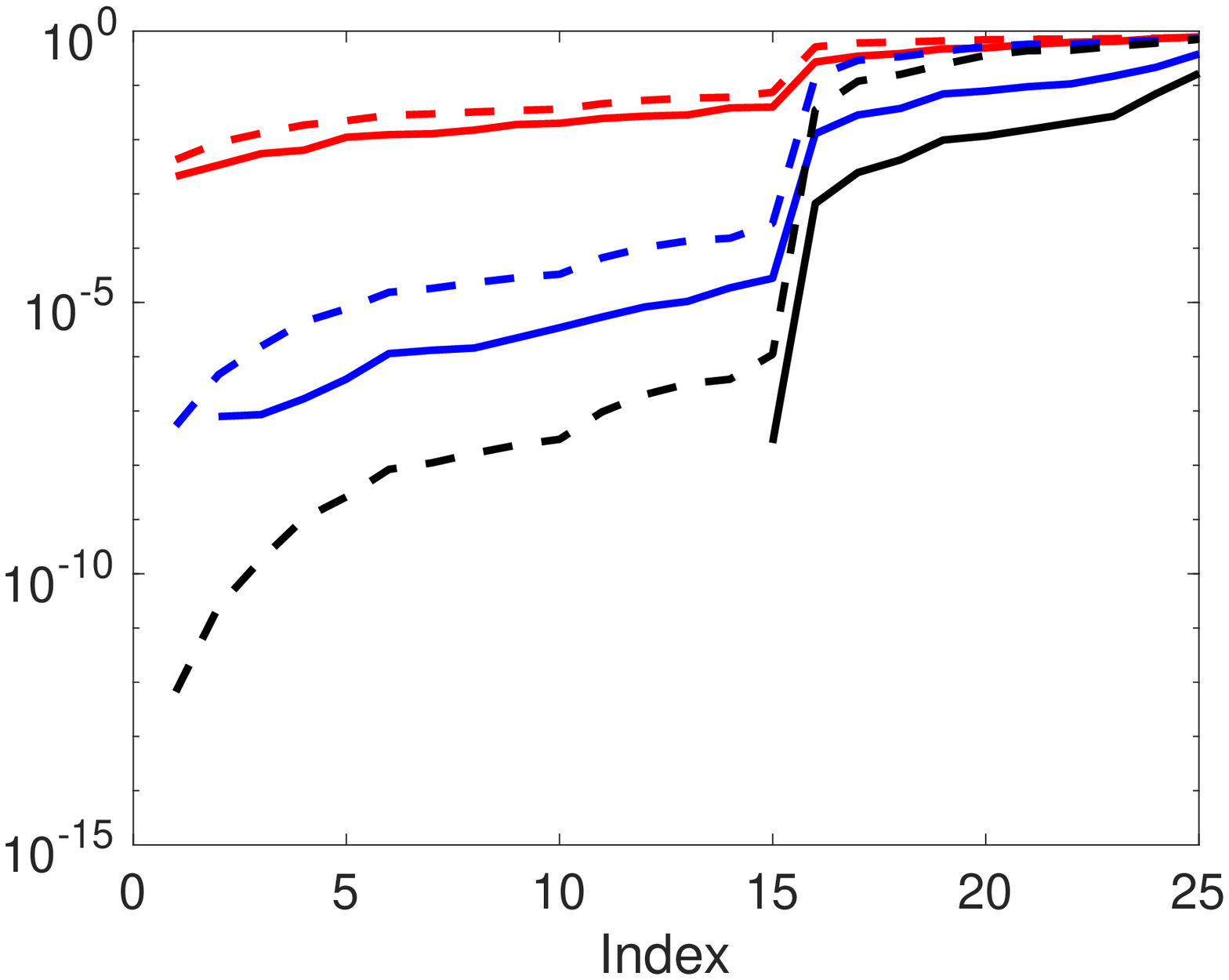}}%
\\
\subfigure[NoiseSmall]{%
\label{fig:cana2}%
\includegraphics[scale=0.24]{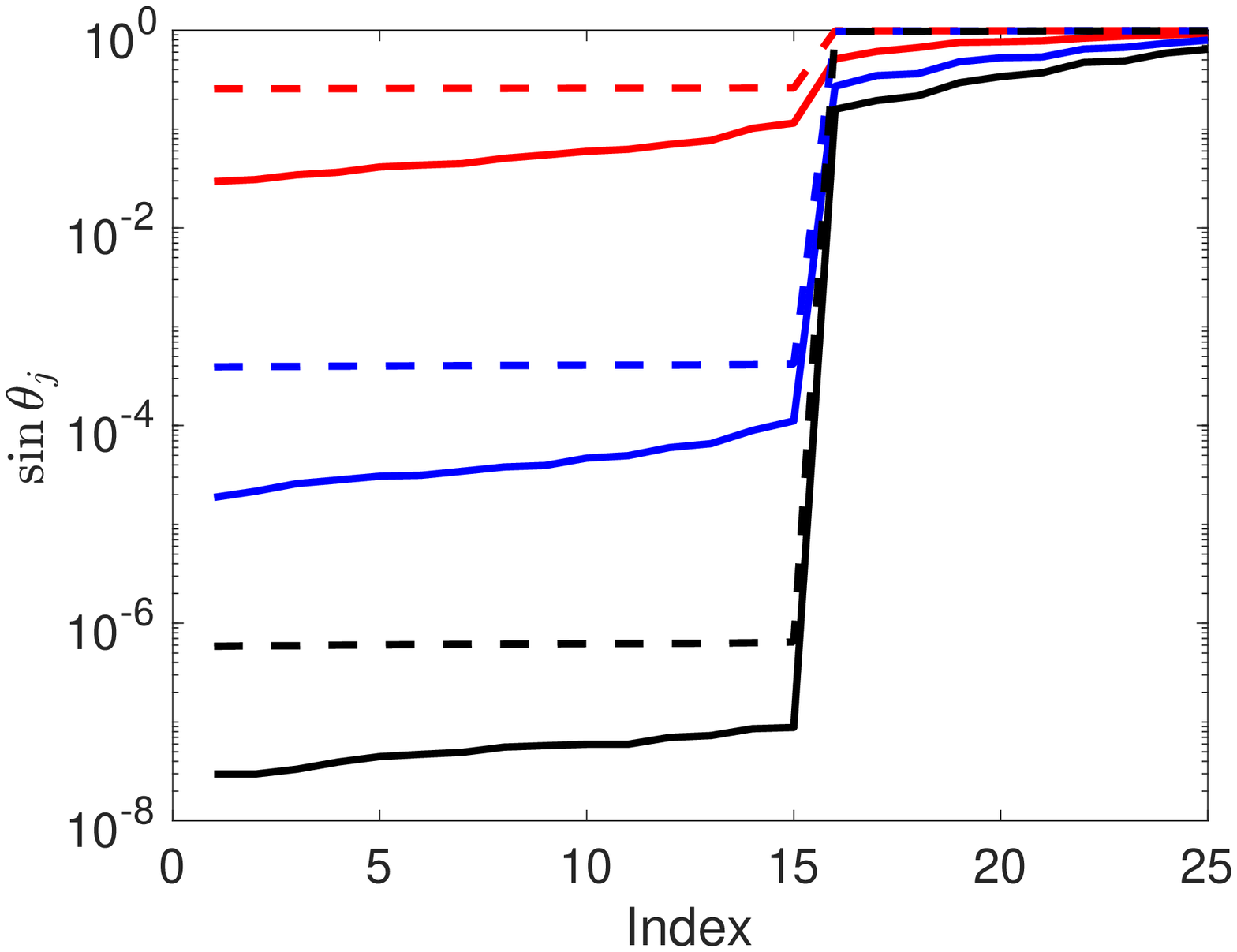}}%
\subfigure[NoiseMedium]{%
\label{fig:canb2}%
\includegraphics[scale=0.24]{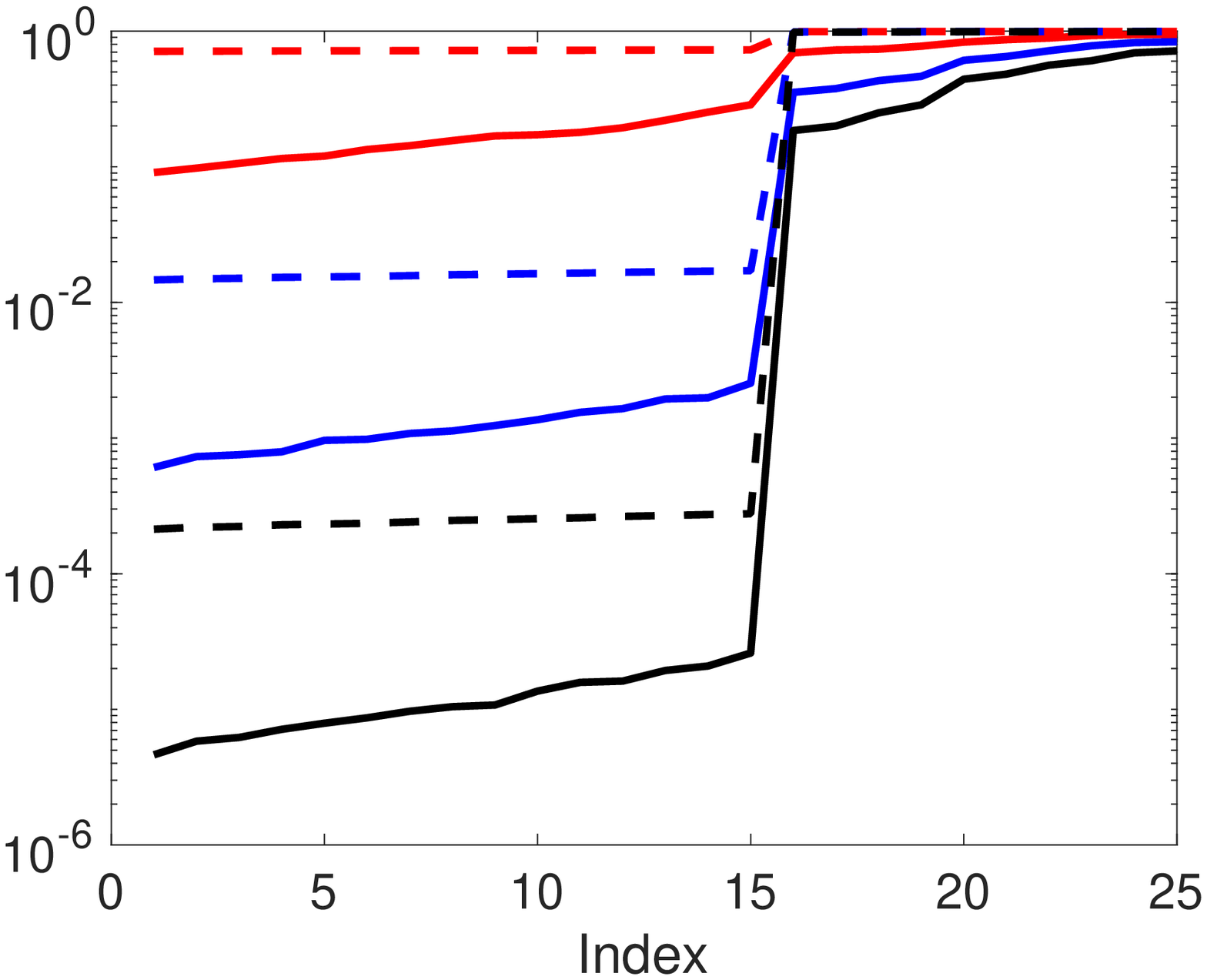}}%
\subfigure[NoiseLarge]{%
\label{fig:canc2}%
\includegraphics[scale=0.24]{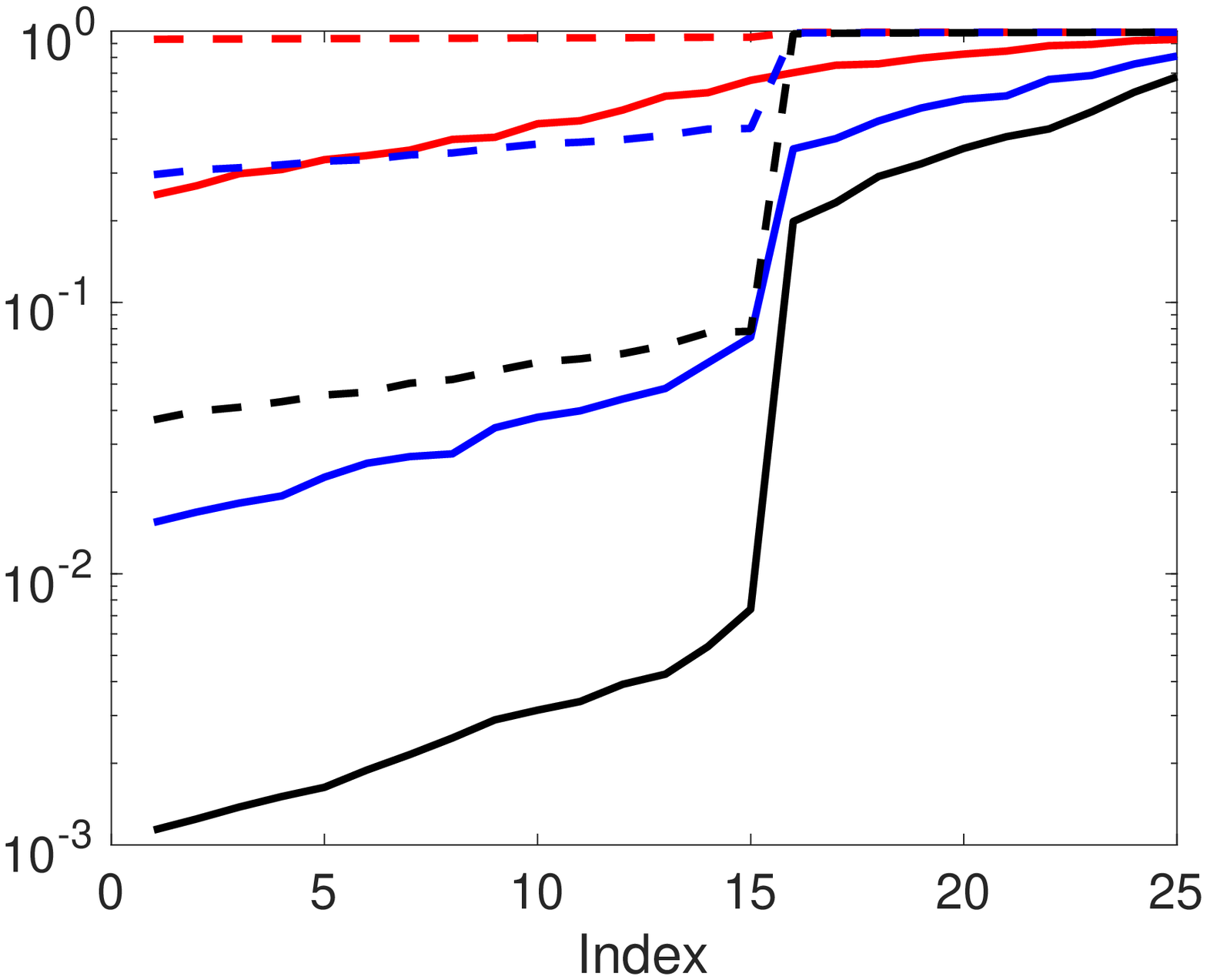}}%
\\
\subfigure[DecaySlow]{%
\label{fig:cana3}%
\includegraphics[scale=0.24]{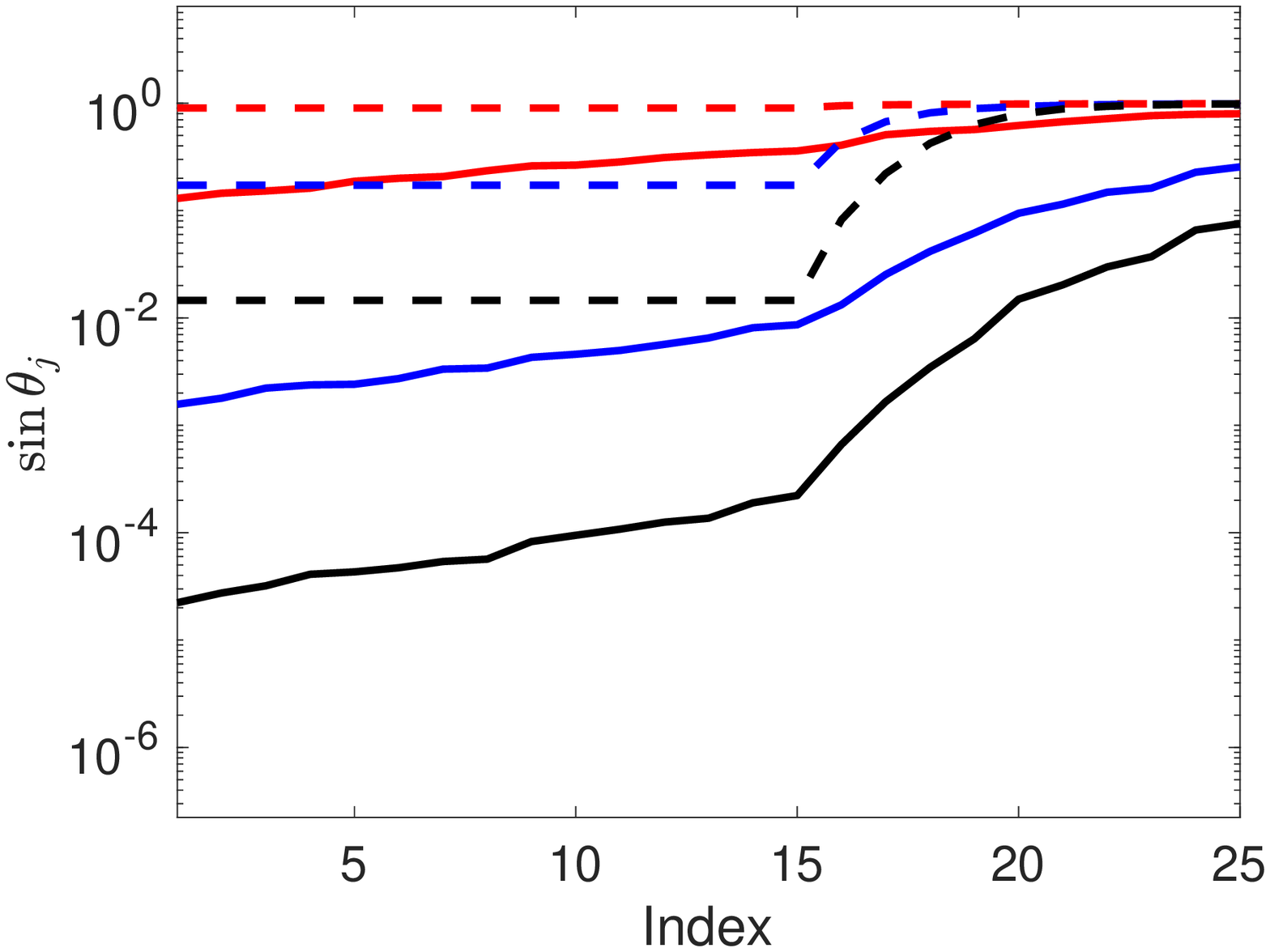}}%
\subfigure[DecayMedium]{%
\label{fig:canb3}%
\includegraphics[scale=0.24]{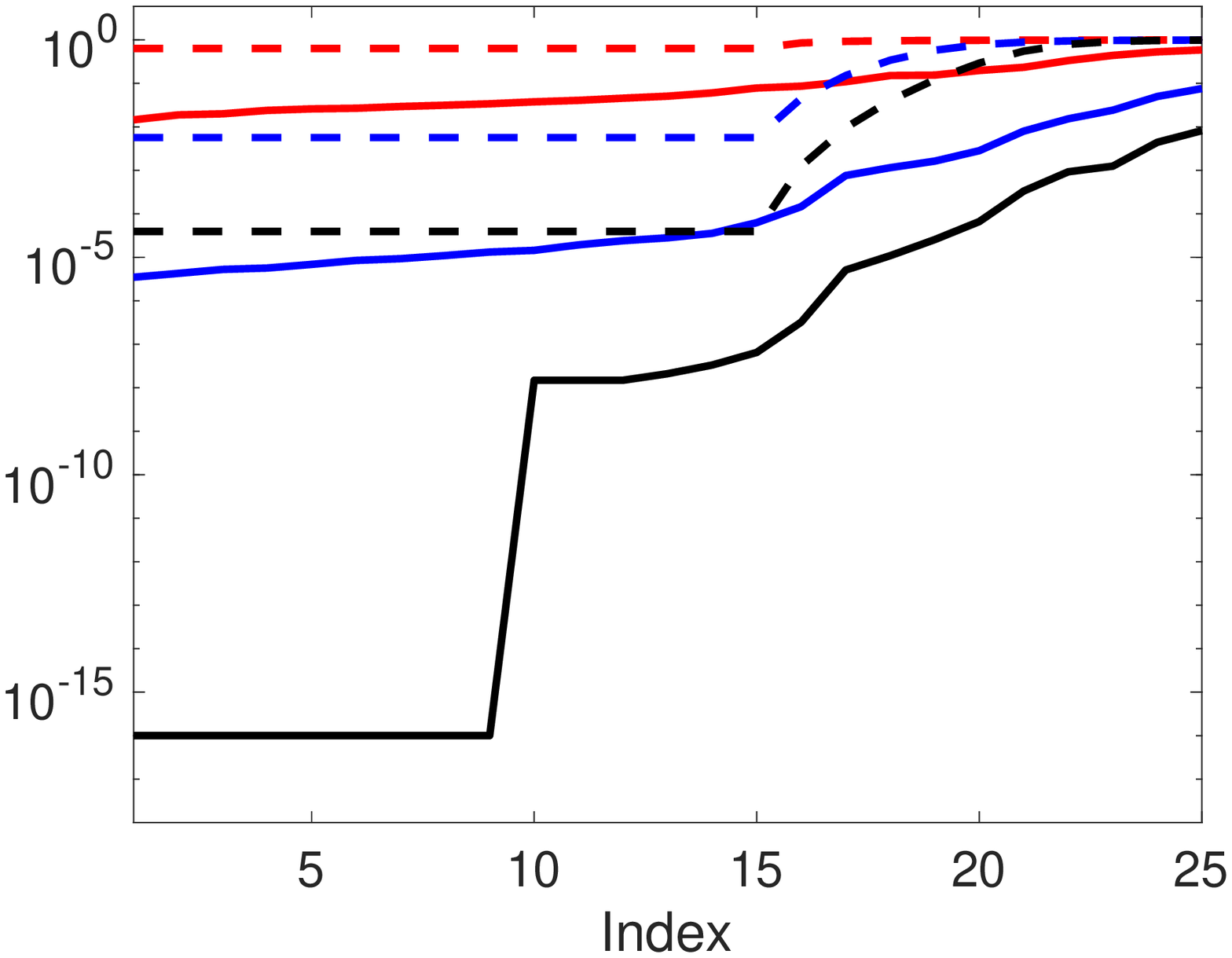}}%
\subfigure[DecayFast]{%
\label{fig:canc3}%
\includegraphics[scale=0.24]{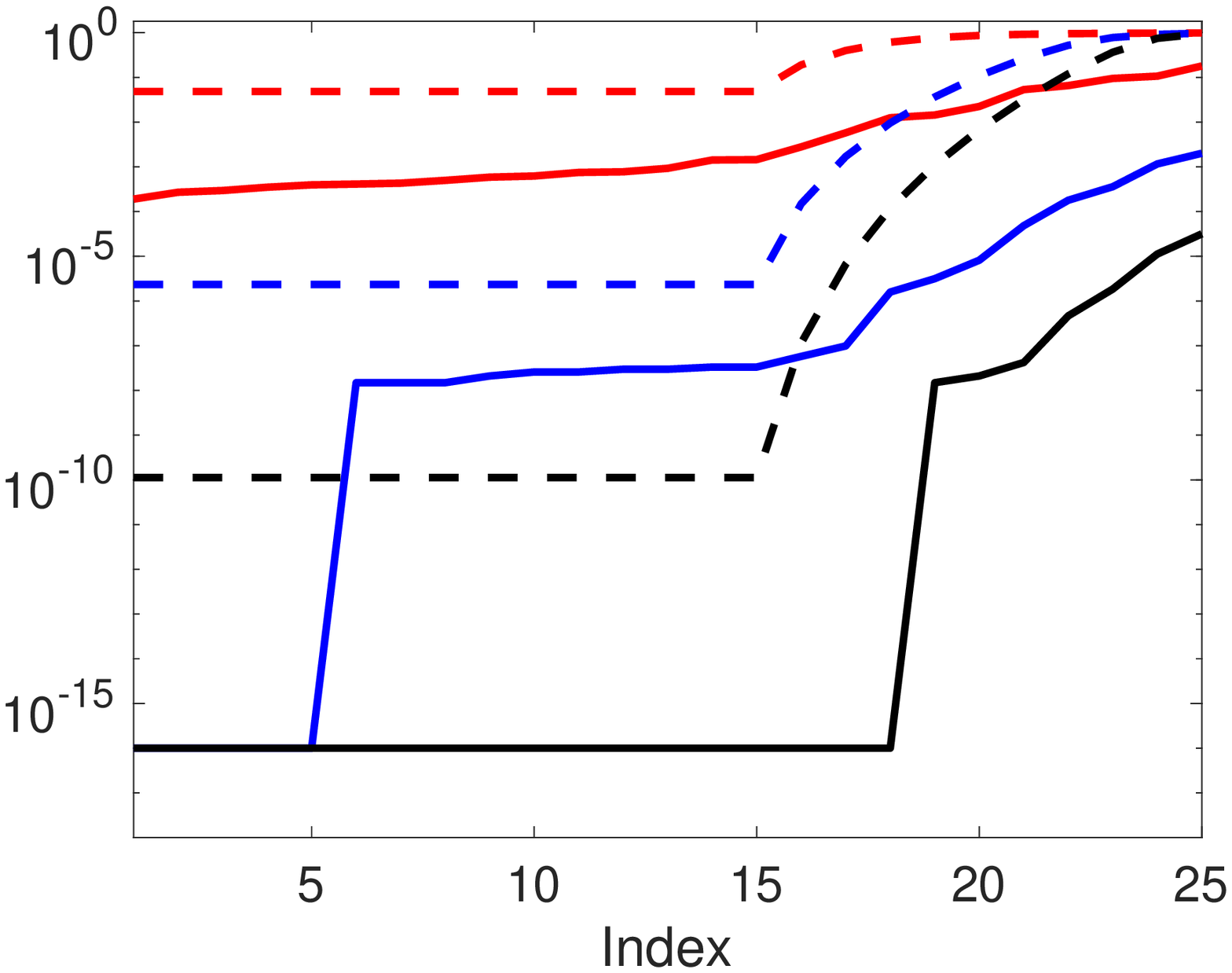}}%
\caption{Plots of $\sin \theta_j $ for $j=1,\dots,k$. The test matrices were described in \cref{ssec:test}. The target rank $k=25$ and an oversampling parameter of $20$ was chosen for all the experiments. The solid lines correspond to the computed values, the dashed lines correspond to bounds obtained using~\cref{thm:u}. The parameter $q$ corresponds to the number of subspace iterations. }
\label{fig:can}
\end{figure}
\subsection{Canonical angles}
For the first numerical example, we use the $9$ test matrices in~\cref{ssec:test}. For each matrix, we chose an oversampling parameter $\rho=20$ and the target rank $k$ was chosen to be $25$. The starting guess $\Omega$ was taken to be a random Gaussian matrix.

\subsubsection{No extraction} We plot the canonical angles $\sin\angle(U_k,\Uh)$ in solid lines, the corresponding bounds from~\cref{thm:u} are also plotted in dashed lines.  The results are displayed in~\cref{fig:can}. We make the following general observations:
{
\begin{itemize}
\item The influence of the subspace iterations on the canonical angles is clear: the angles become smaller as the number of iterations $q$ increases. This implies that the subspace is becoming more accurate. 
\item If there is a large singular value gap in the spectrum, this means that all the canonical angles below that index are captured accurately. This is prominently seen in~\cref{fig:canc1}, in which there is a large gap between singular values $15$ and $16$. Similar observations can be made in the other figures. 
\item As the decay rate of the singular values increases, the corresponding canonical angles become smaller. 
\item  In most figures the bounds are qualitatively informative, but in some figures, the bounds are also quantitatively accurate (e.g., GapLarge).
\item  Similar results were observed for $\sin\angle(V_k,\Vh)$ and, therefore, omitted. 
\end{itemize}•
}
{
We now make observations specific to the test examples: 
\begin{description}
\item [1. Gap examples] The computed canonical angles decrease as the gap increases, and with more iterations. The test matrices (GapMedium and GapLarge) have both a decay in the singular values and a prominent singular value gap between indices $15$ and $16$.   These matrices satisfy the assumptions of our analysis, and therefore the bounds can be expected to be good. We see that as the size of the gap increases, the bounds become more accurate in accordance with Theorem~\ref{thm:u}. GapSmall has decay in the singular values but no special singular value gap. Even in this case, the bounds are qualitatively good.
\item [2. Noise examples]  NoiseSmall is close to a low-rank matrix and  there is a large singular value gap at index $15$. For this example, the bounds are qualitatively good. As the level of noise increases, the gap decreases and therefore, the computed angles increase, as predicted by~\cref{thm:u}. The bounds are uninformative for $q=0$, but qualitatively good for $q=1$ and $2$. Compared to the Gap examples, the bounds are not as sharp since there is very little decay in the singular values. 
\item [3. Decay examples] In these examples, the singular values decay beyond index $15$ but there is no prominent gap. As the rate of decay increases, in general, the canonical angles decrease. It is also seen that the bounds are qualitatively accurate (except for $q=0$). 
\end{description}
 }

\subsubsection{Extraction step} Our next experiment tests the effect of the extraction step on the accuracy of the canonical angles. We now compute $\sin\theta_j'$ and $\sin\nu_j'$ for the test matrices described in~\cref{ssec:test}. We plot the quantities $\max\{\sin\theta_j',\sin\nu_j'\}$ for $j=1,\dots,k$ in solid lines. The corresponding bounds from~\cref{thm:sintheta} are plotted in dashed lines. Here, the target rank was chosen to be $k=15$, to exploit the singular value gap in the matrices. We make the following general observations:
{
\begin{itemize}	
\item The extraction step did not significantly affect the canonical angles and the accuracy is comparable to~\cref{fig:can}.  The subspaces are more accurate as the number of iterations increase, and if there is a large singular value gap at index $j$, then the canonical angles with index $j' < j$  are captured accurately.
\item Although the canonical angles are small, compared to~\cref{thm:u}, the bounds in~\cref{thm:sintheta} are not as accurate. One reason is that the upper bounds in~\cref{thm:u} are at most $1$, but the bounds in~\cref{thm:sintheta} are allowed to be greater than $1$. Furthermore, the bound in~\cref{thm:sintheta} has the factor $1/(1-\gamma_k)$ in the denominator, which can be quite large when there is a small singular value gap. It may be possible to derive better bounds, but we could not immediately see how to derive them. 
\item We also compared the accuracy of the individual singular vectors (not shown here). The results and the conclusions are similar. 
\end{itemize}
}
{
We now make observations specific to the test examples: 
\begin{description}
\item [1. Gap examples] The behavior of the computed canonical angles is very similar to that without the extraction step. In general, the angles decrease as the gap increases. When the parameter $\text{gap}$ is small, the singular value ratio $\gamma_k$ is large, and $(1-\gamma_k)^{-1}$ is small. This explains why the bounds are bad for GapSmall and GapMedium, and show little improvement with more subspace iterations. Only for the GapLarge example with $q=0$, the bounds are qualitatively good. 
\item [2. Noise examples] The computed canonical angles decrease as the noise decreases. In all three examples, the  bounds are qualitatively good. The bounds are better for NoiseSmall and NoiseMedium because the singular value gap between indices $15$ and $16$ is bigger than that for NoiseLarge. 
\item [3. Decay examples] The computed canonical angles become smaller as the decay of the singular values increases. In these examples, there is no prominent gap, so the bounds don't capture the behavior well. However, the computed angles are small, and the subspace is accurate. 
\end{description}
 }

\begin{figure}[!ht]%
\centering
\subfigure[GapSmall]{%
\label{fig:exta1}%
\includegraphics[scale=0.24]{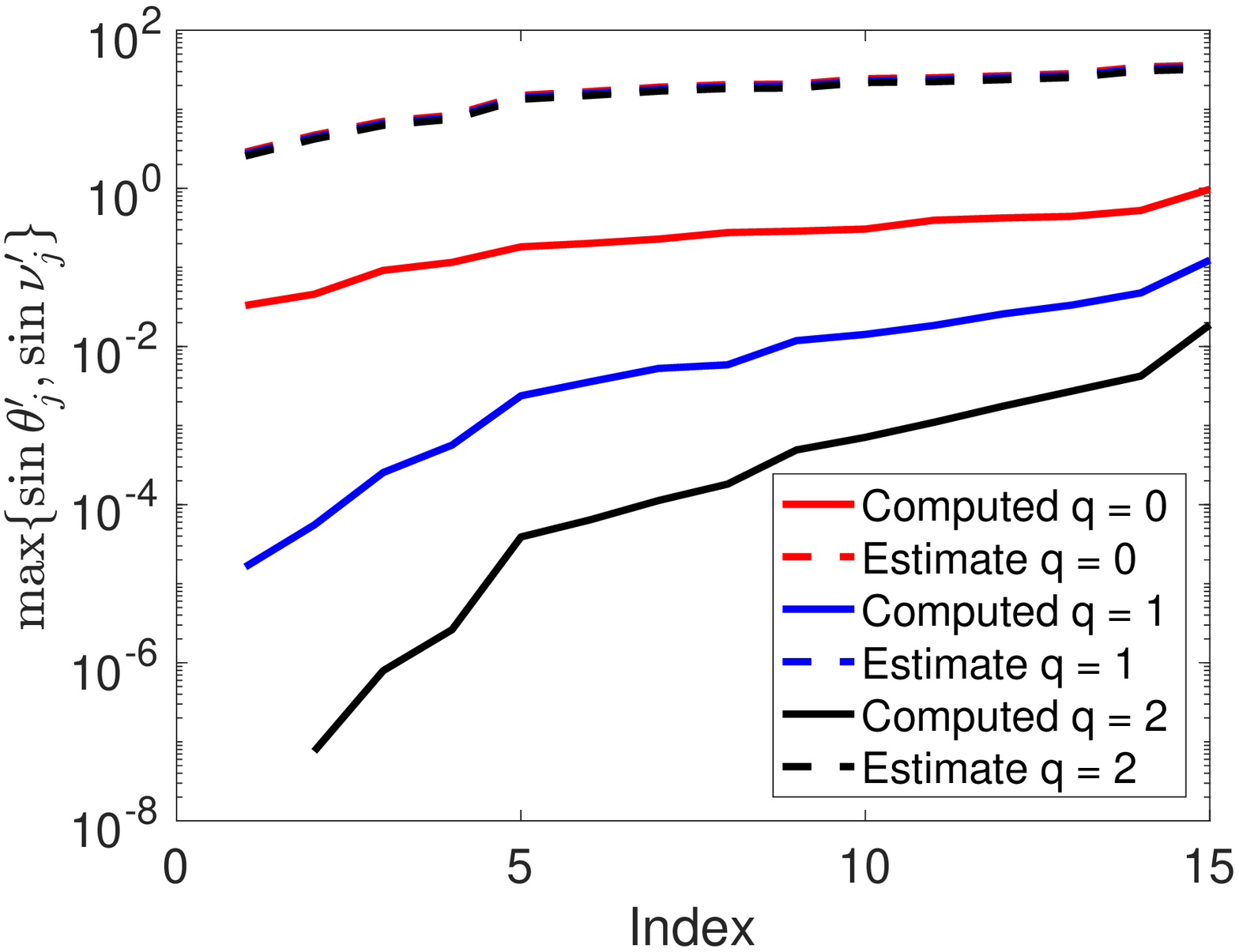}}%
\subfigure[GapMedium]{%
\label{fig:extb1}%
\includegraphics[scale=0.24]{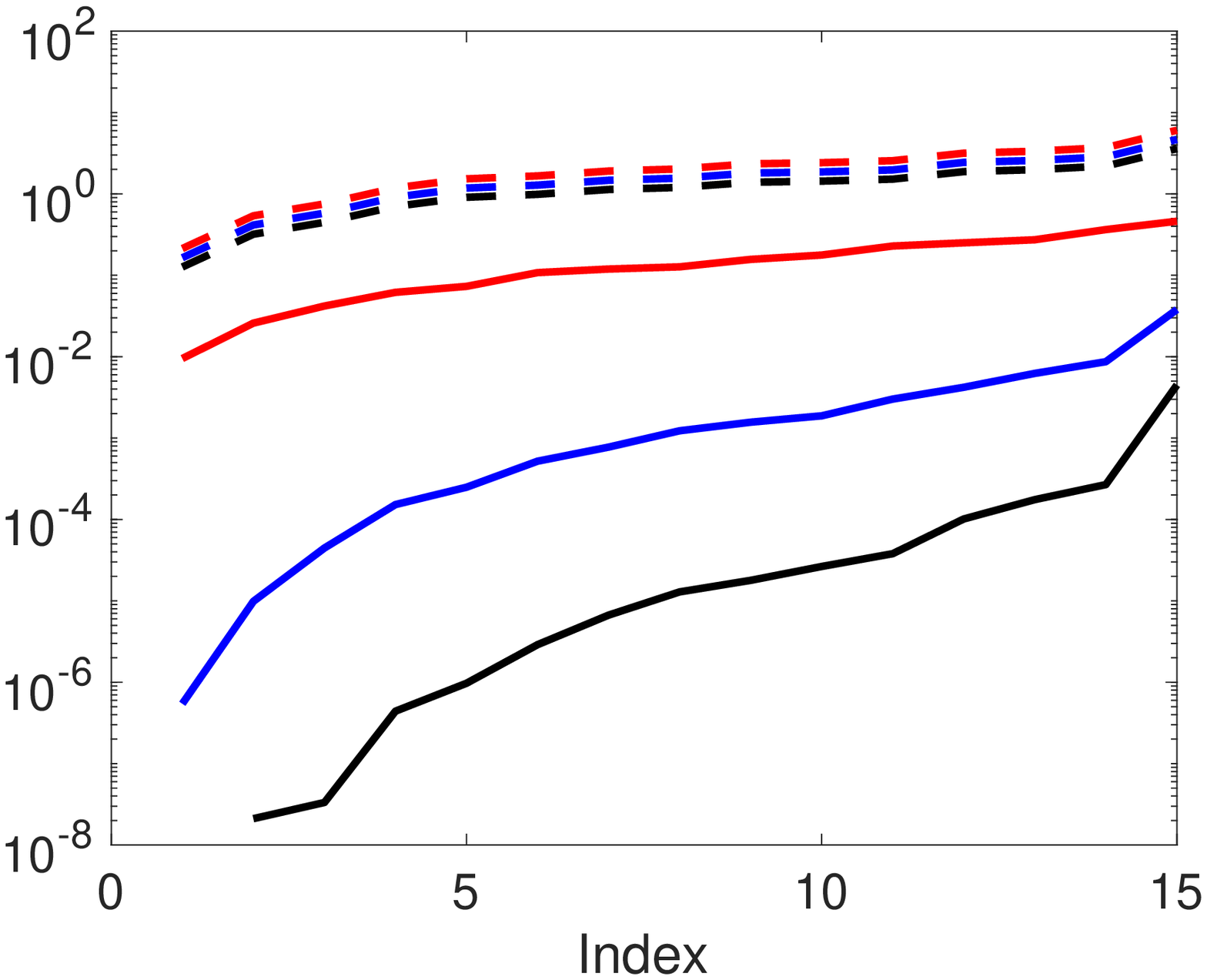}}%
\subfigure[GapLarge]{%
\label{fig:extc1}%
\includegraphics[scale=0.24]{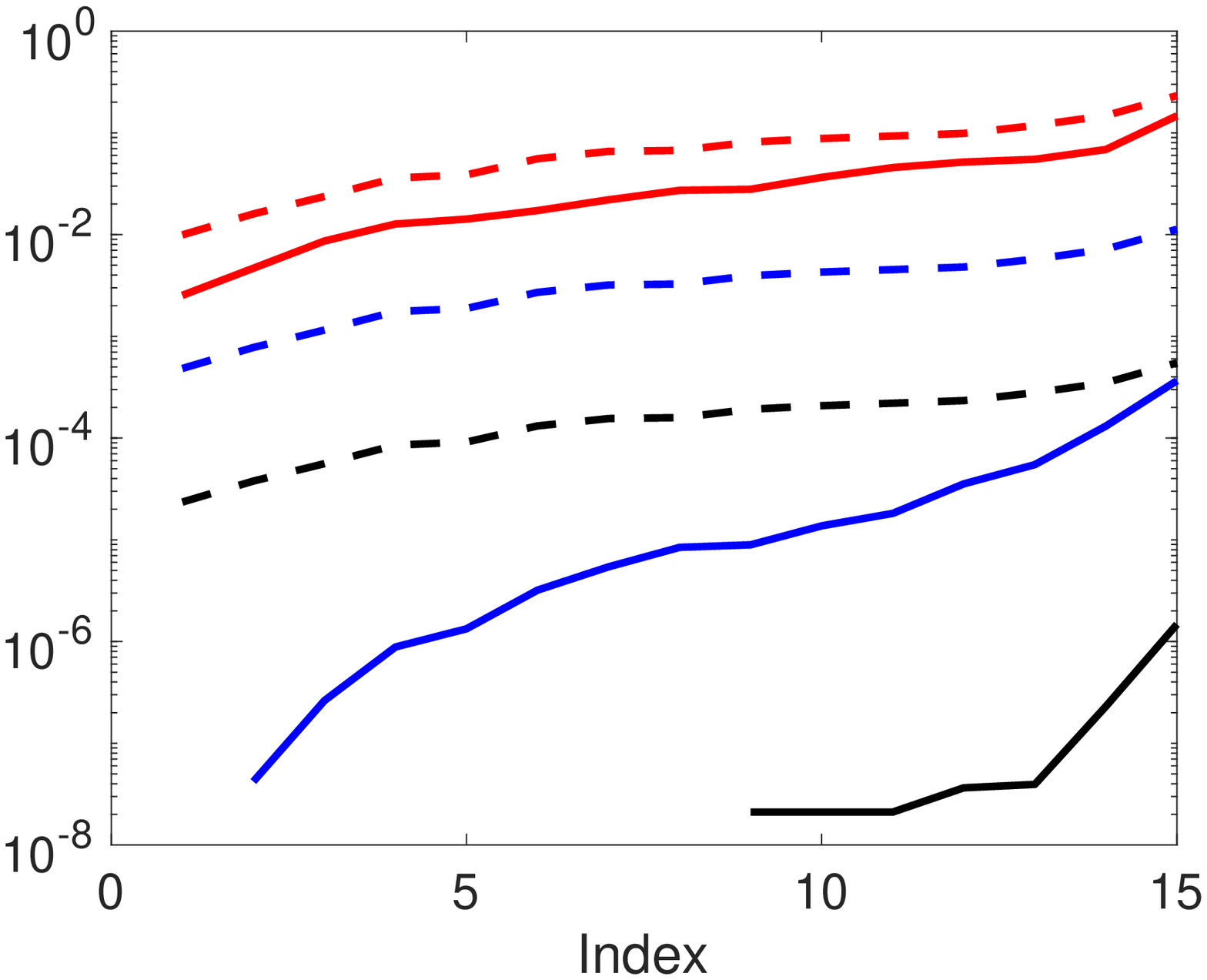}}%
\\
\subfigure[NoiseSmall]{%
\label{fig:exta2}%
\includegraphics[scale=0.24]{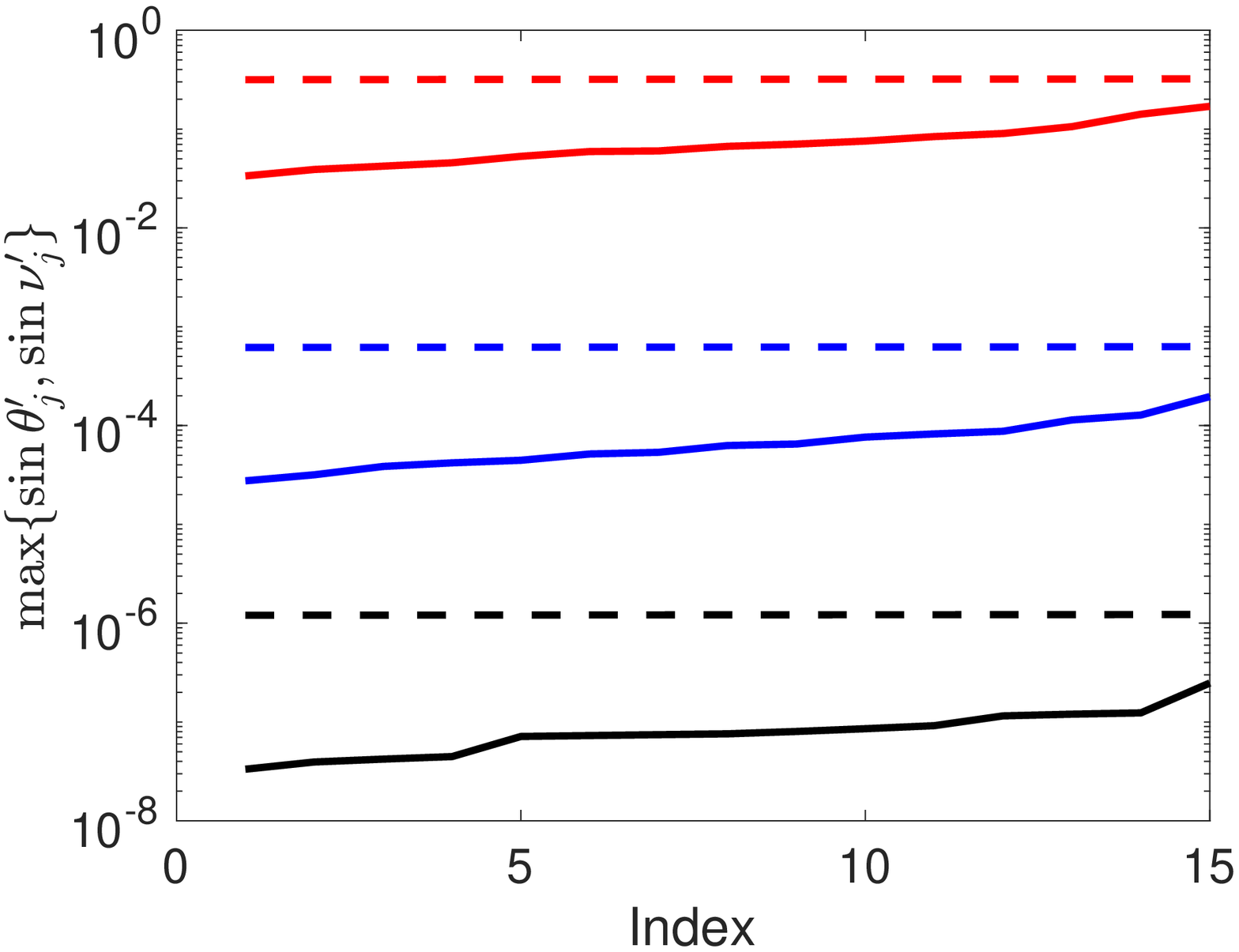}}%
\subfigure[NoiseMedium]{%
\label{fig:extb2}%
\includegraphics[scale=0.24]{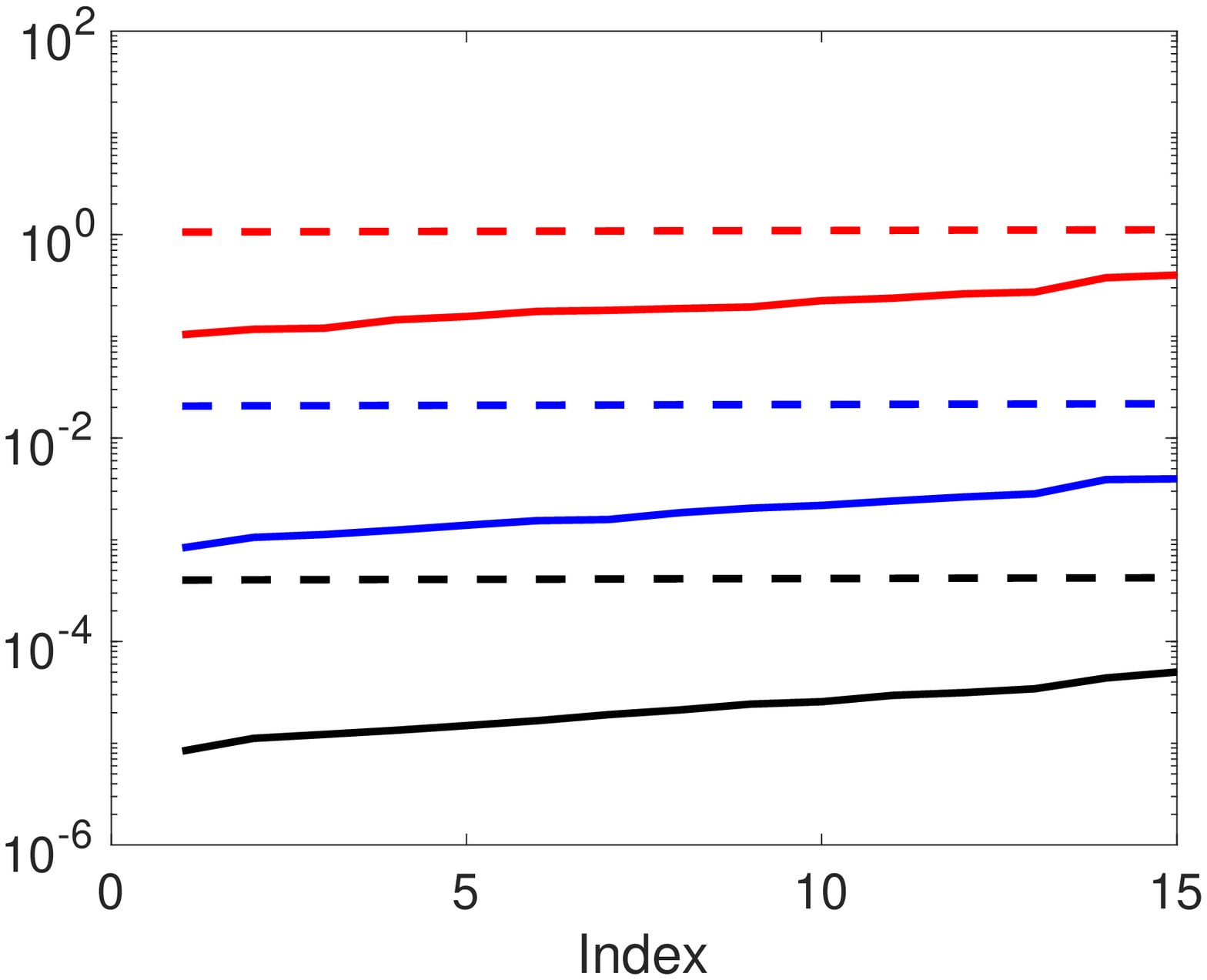}}%
\subfigure[NoiseLarge]{%
\label{fig:extc2}%
\includegraphics[scale=0.24]{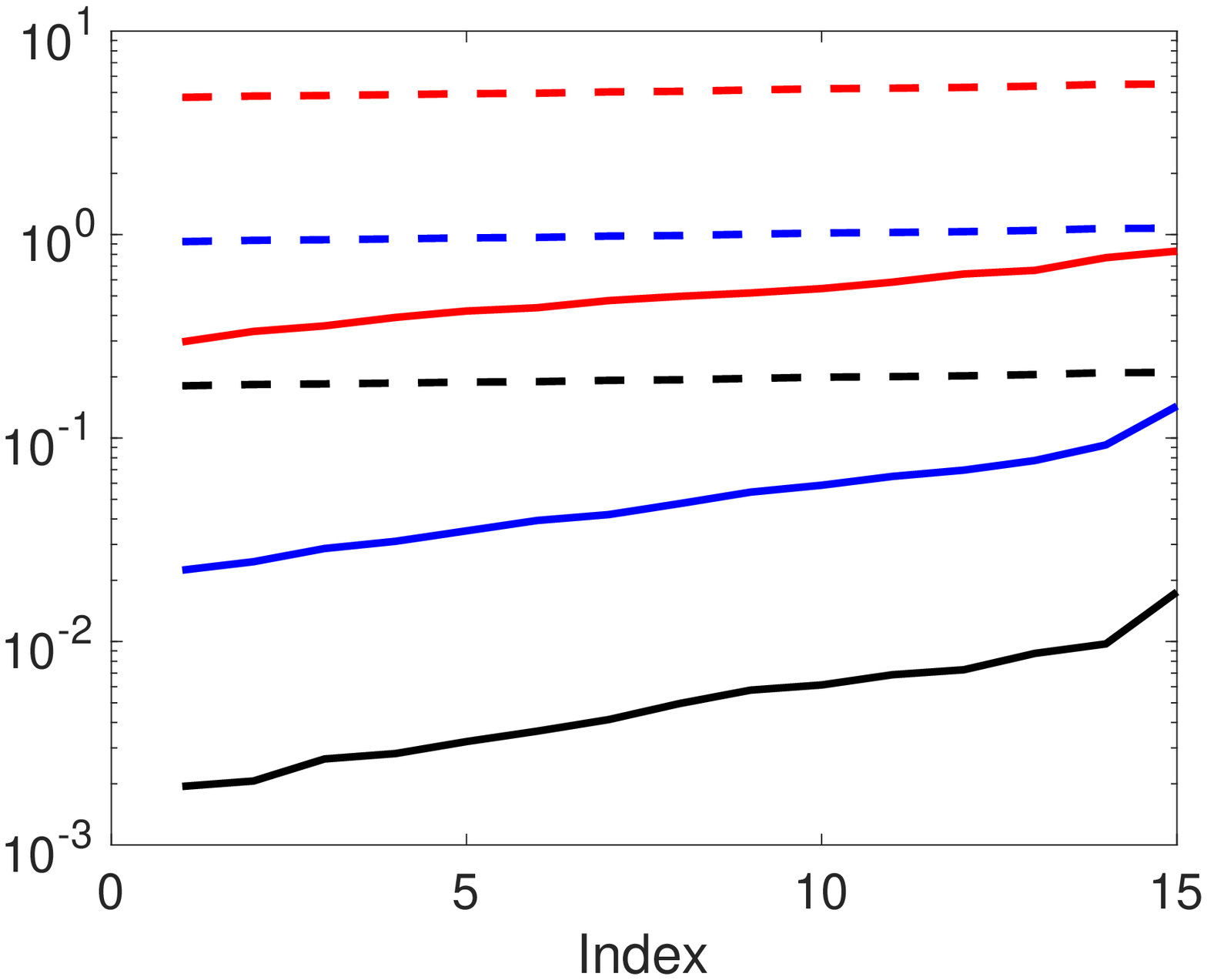}}%
\\
\subfigure[DecaySlow]{%
\label{fig:exta3}%
\includegraphics[scale=0.24]{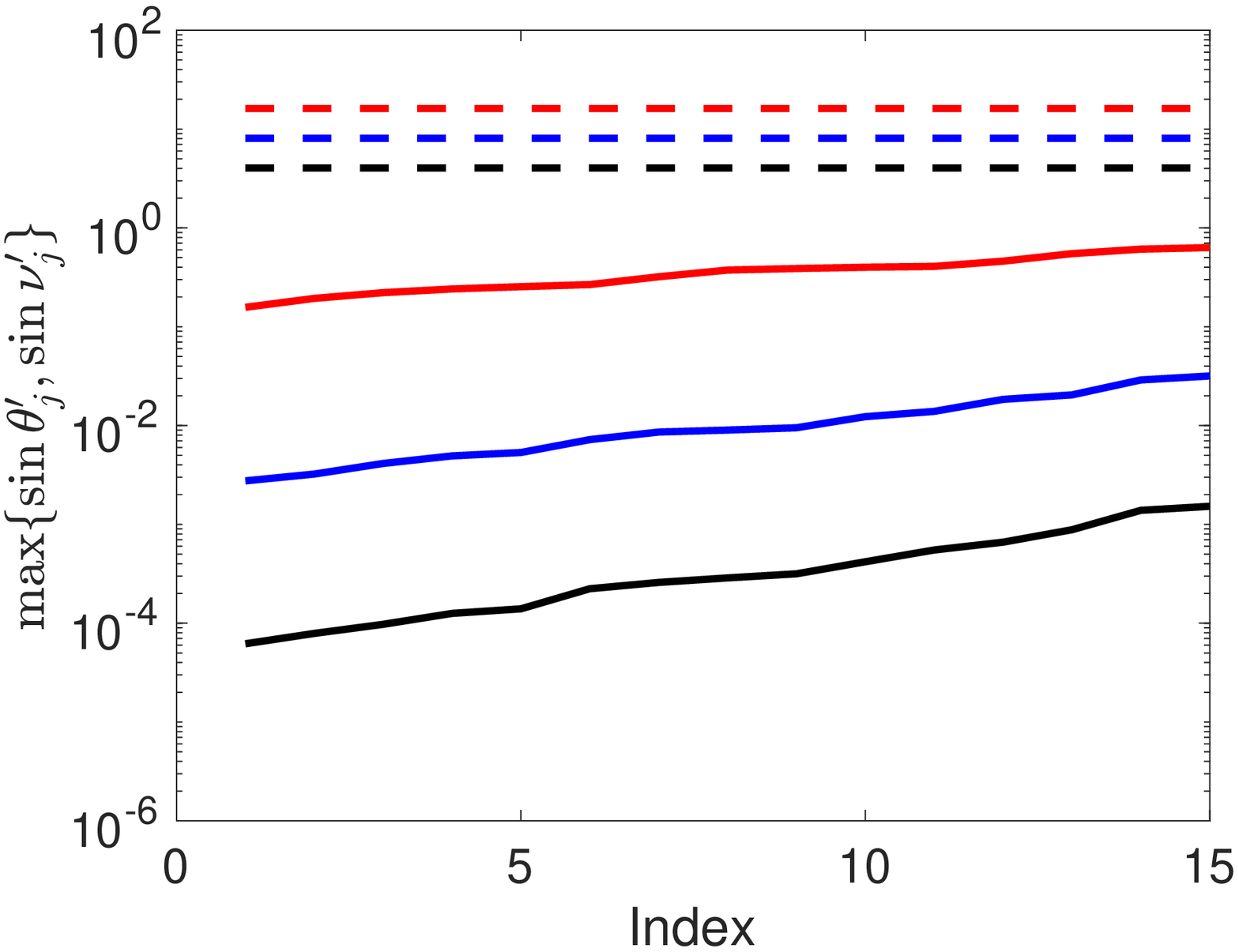}}%
\subfigure[DecayMedium]{%
\label{fig:extb3}%
\includegraphics[scale=0.24]{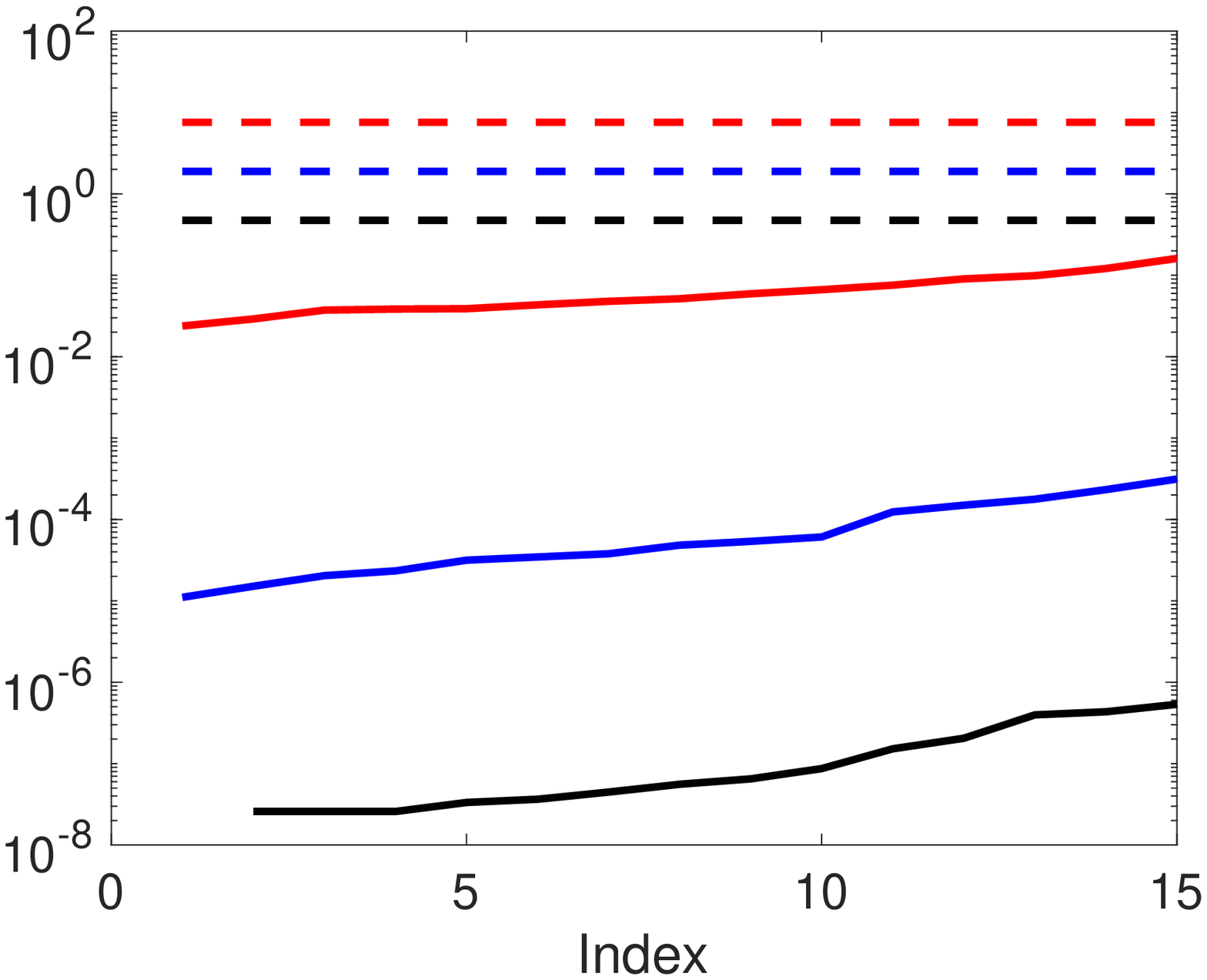}}%
\subfigure[DecayFast]{%
\label{fig:extc3}%
\includegraphics[scale=0.24]{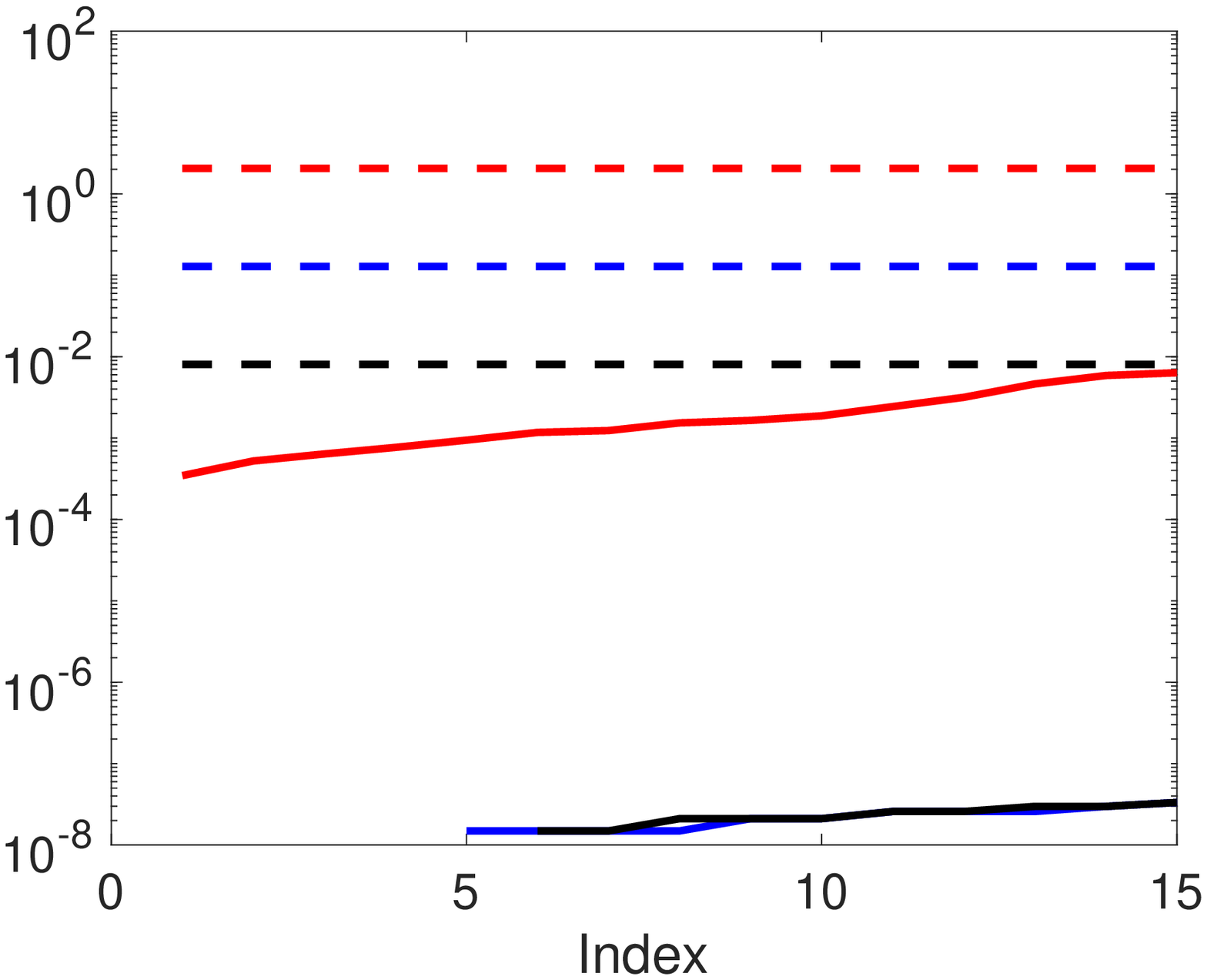}}%
\caption{Plots of $\max\{\sin \theta_j',\sin\nu_j'\} $ for $j=1,\dots,k$. The test matrices were described in~\cref{ssec:test}. The target rank $k=15$ and an oversampling parameter of $20$ was chosen for all the experiments. The solid lines correspond to the computed values, the dashed lines correspond to bounds obtained using~\cref{thm:sintheta}. The parameter $q$ corresponds to the number of subspace iterations.}
\label{fig:ext}
\end{figure}

\subsection{Singular Values} We now consider the accuracy of the singular values. We use the same test matrices and the remaining parameters are kept fixed. The computed singular values are plotted against the upper and lower bounds. We make the following general observations:
{ 
\begin{itemize}
\item For the large singular values, both the upper and lower bounds are qualitatively good for all the examples that we tested.
\item As the number of iterations increase, the singular values are computed more accurately and are close to the upper bounds (the exact singular values). However, for indices close to the target rank, the lower bounds are are not tight. The bounds get tighter as the number of iterations $q$ increase. 
\item The bounds for the singular values quantitatively better than the bounds for the canonical angles. 
\end{itemize} 
}
\begin{figure}[!ht]%
\centering
\subfigure[GapSmall, $q=0$]{%
\label{fig:svsa1}%
\includegraphics[scale=0.24]{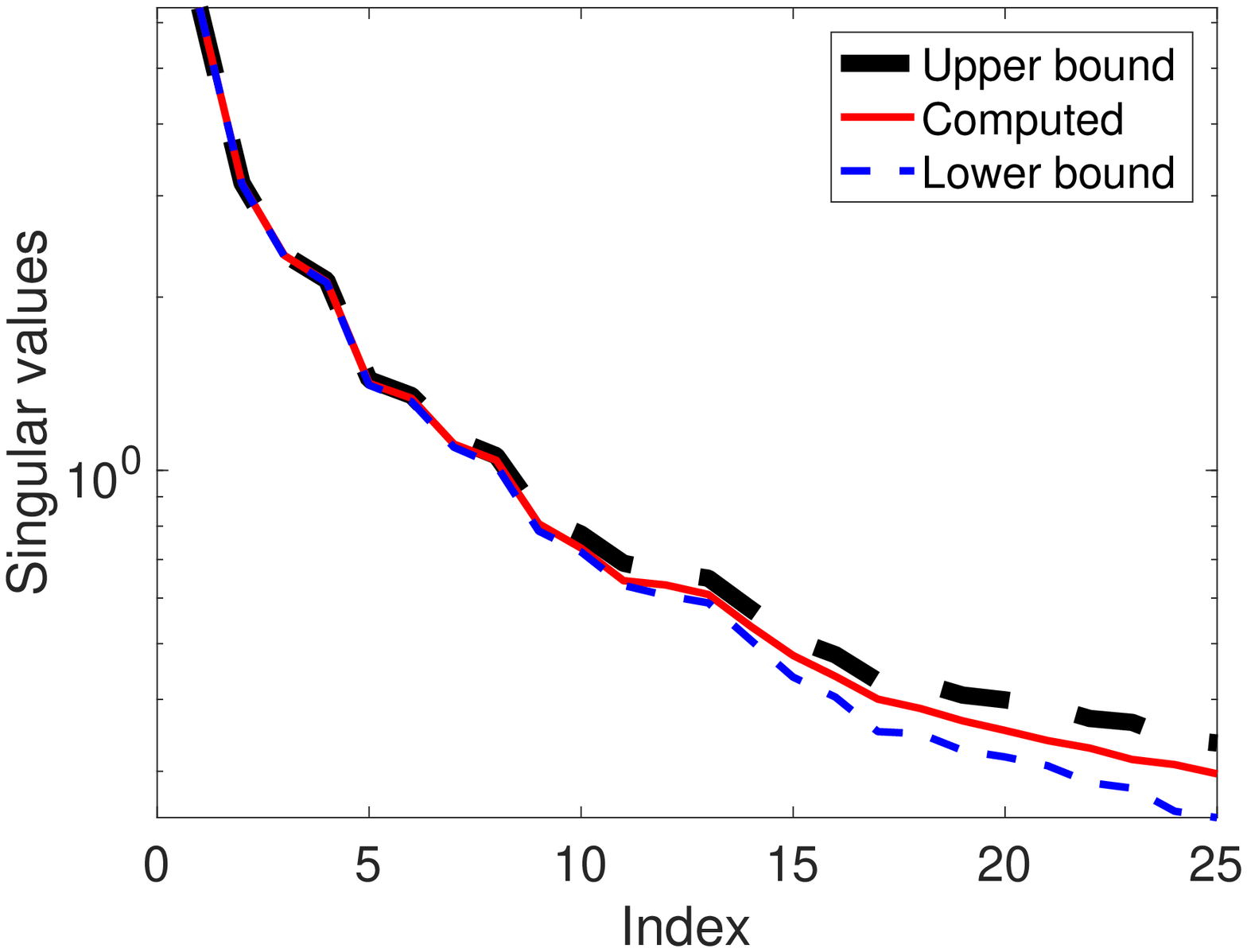}}%
\subfigure[GapSmall, $q=1$]{%
\label{fig:svsb1}%
\includegraphics[scale=0.24]{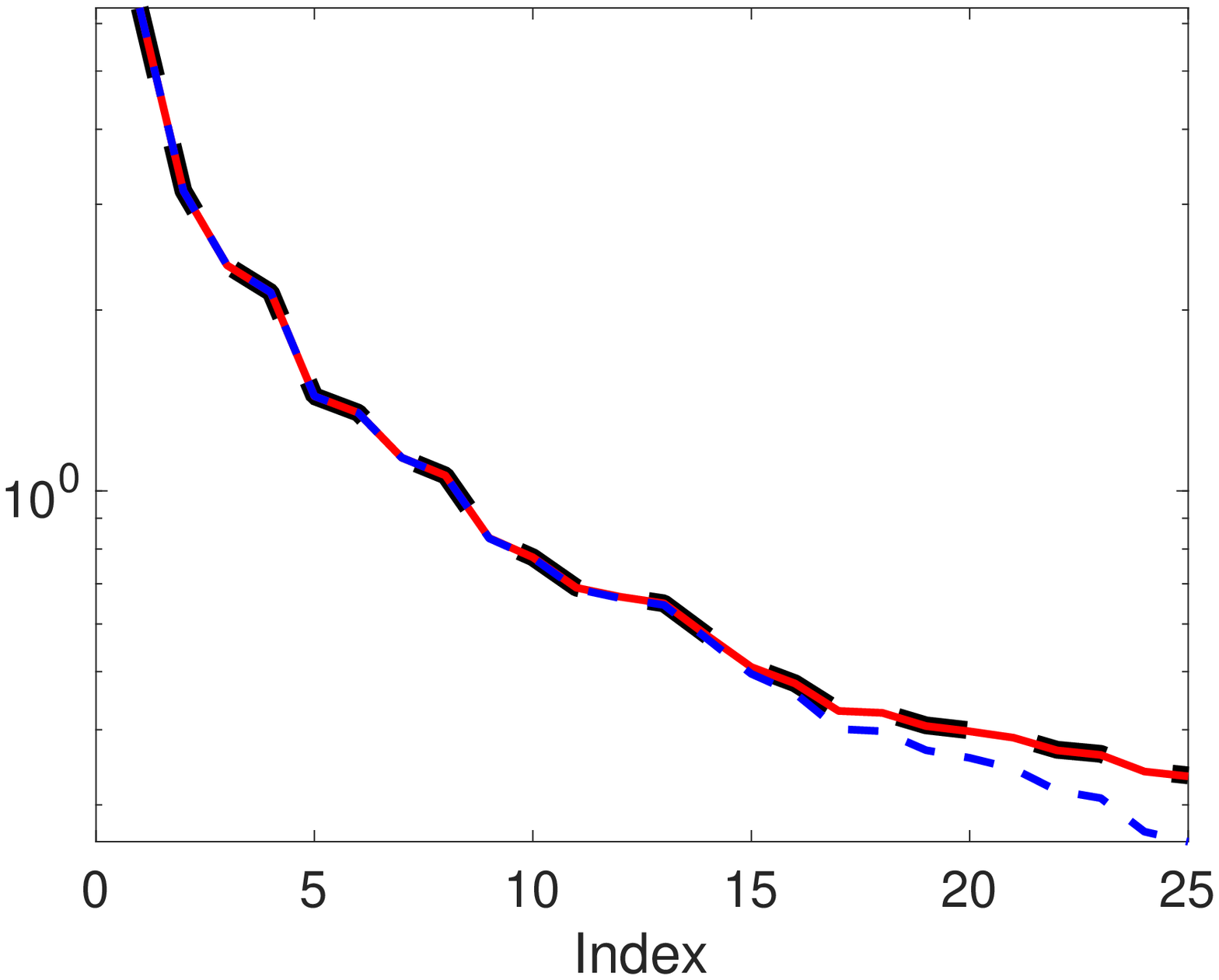}}%
\subfigure[GapSmall, $q=2$]{%
\label{fig:svsc1}%
\includegraphics[scale=0.24]{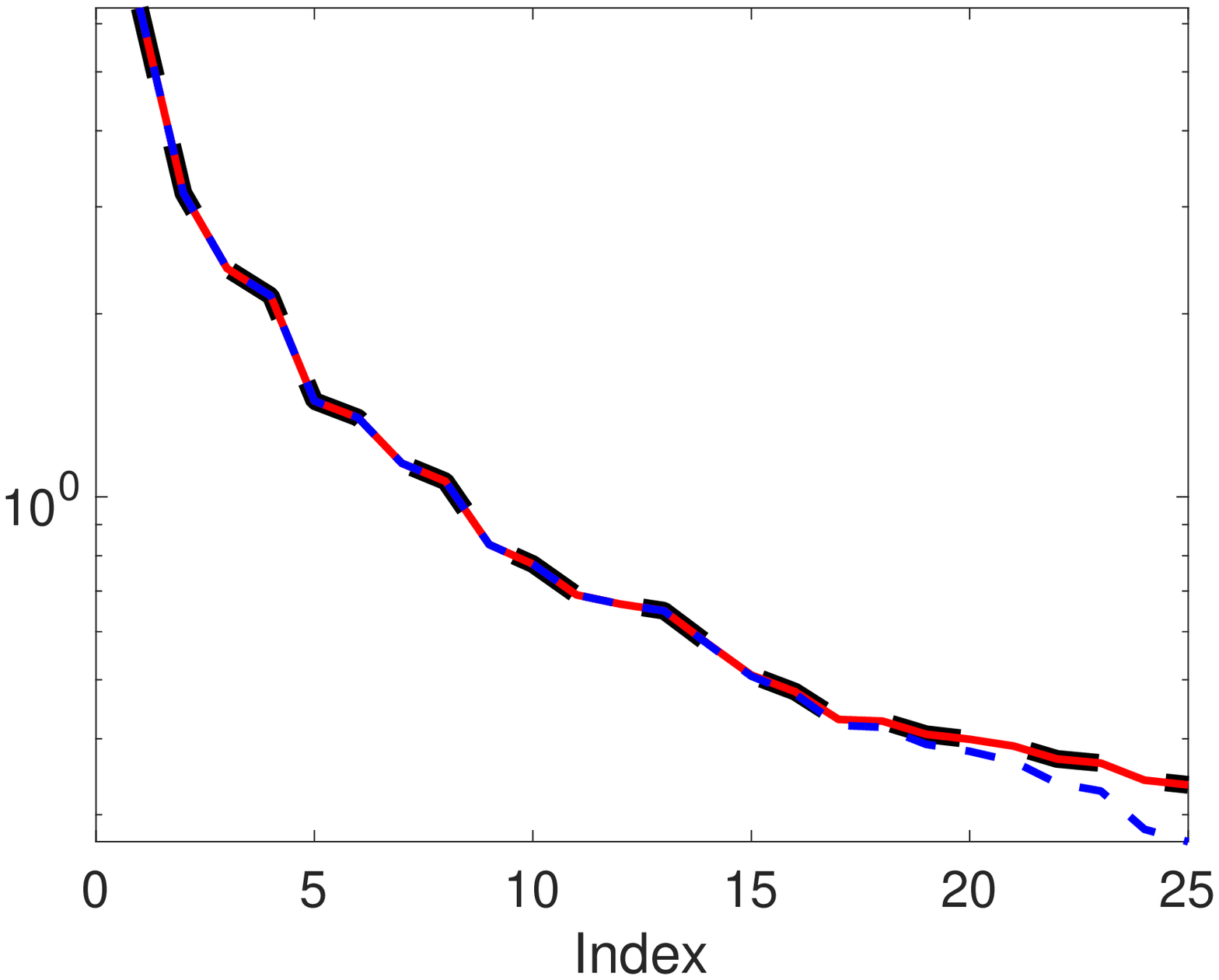}}%
\\
\subfigure[NoiseMedium, $q=0$]{%
\label{fig:svsa2}%
\includegraphics[scale=0.24]{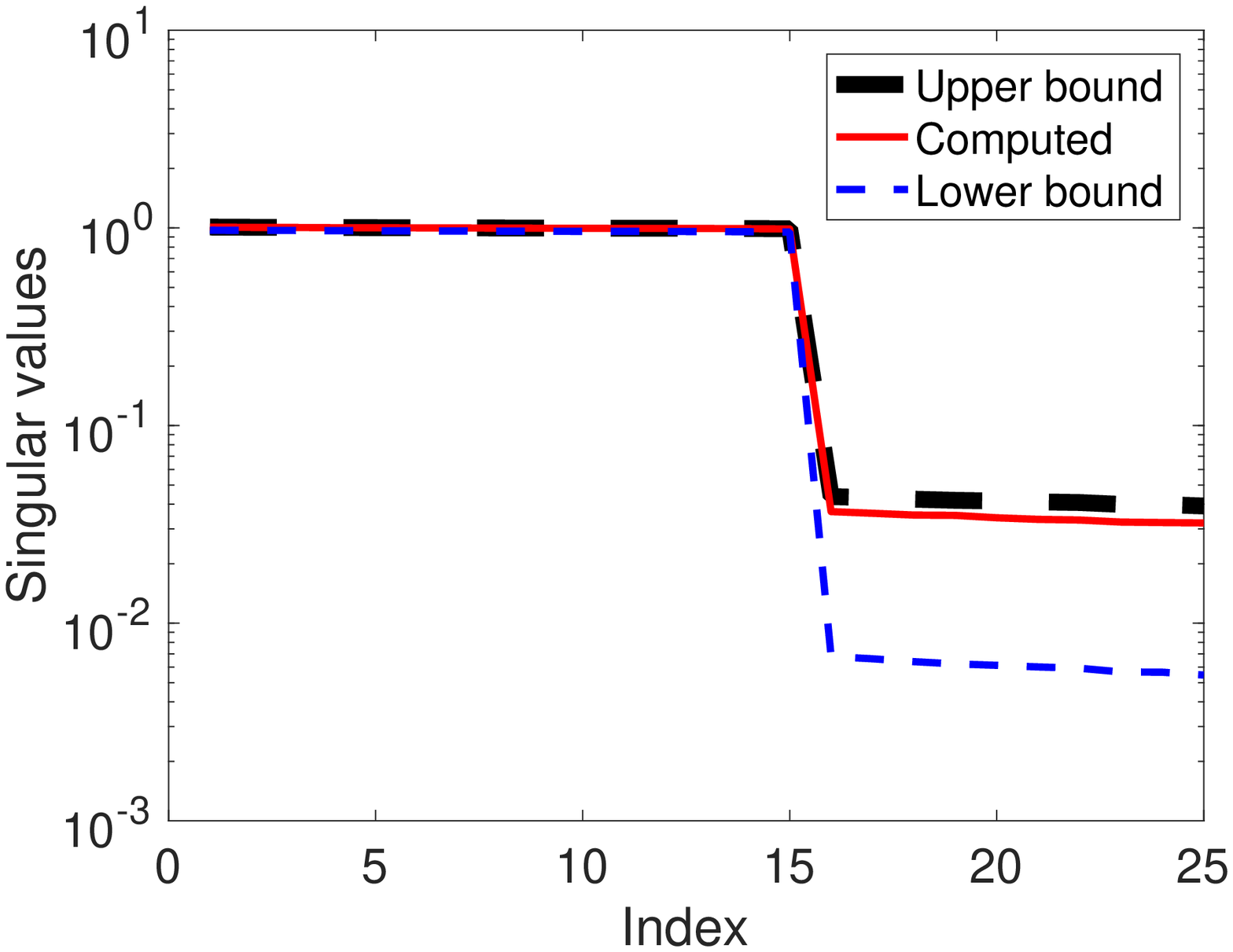}}%
\subfigure[NoiseMedium, $q=1$]{%
\label{fig:svsb2}%
\includegraphics[scale=0.24]{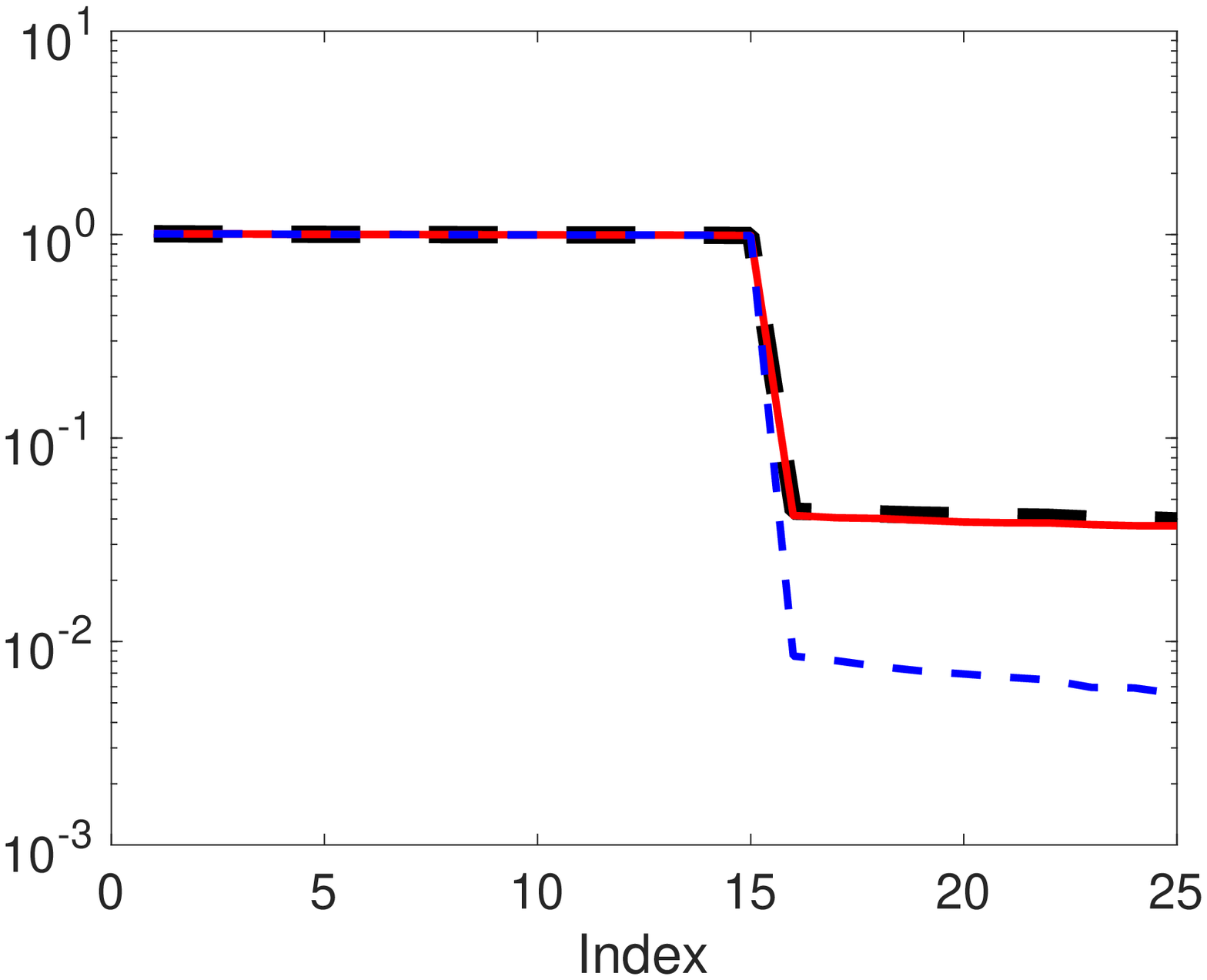}}%
\subfigure[NoiseMedium, $q=2$]{%
\label{fig:svsc2}%
\includegraphics[scale=0.24]{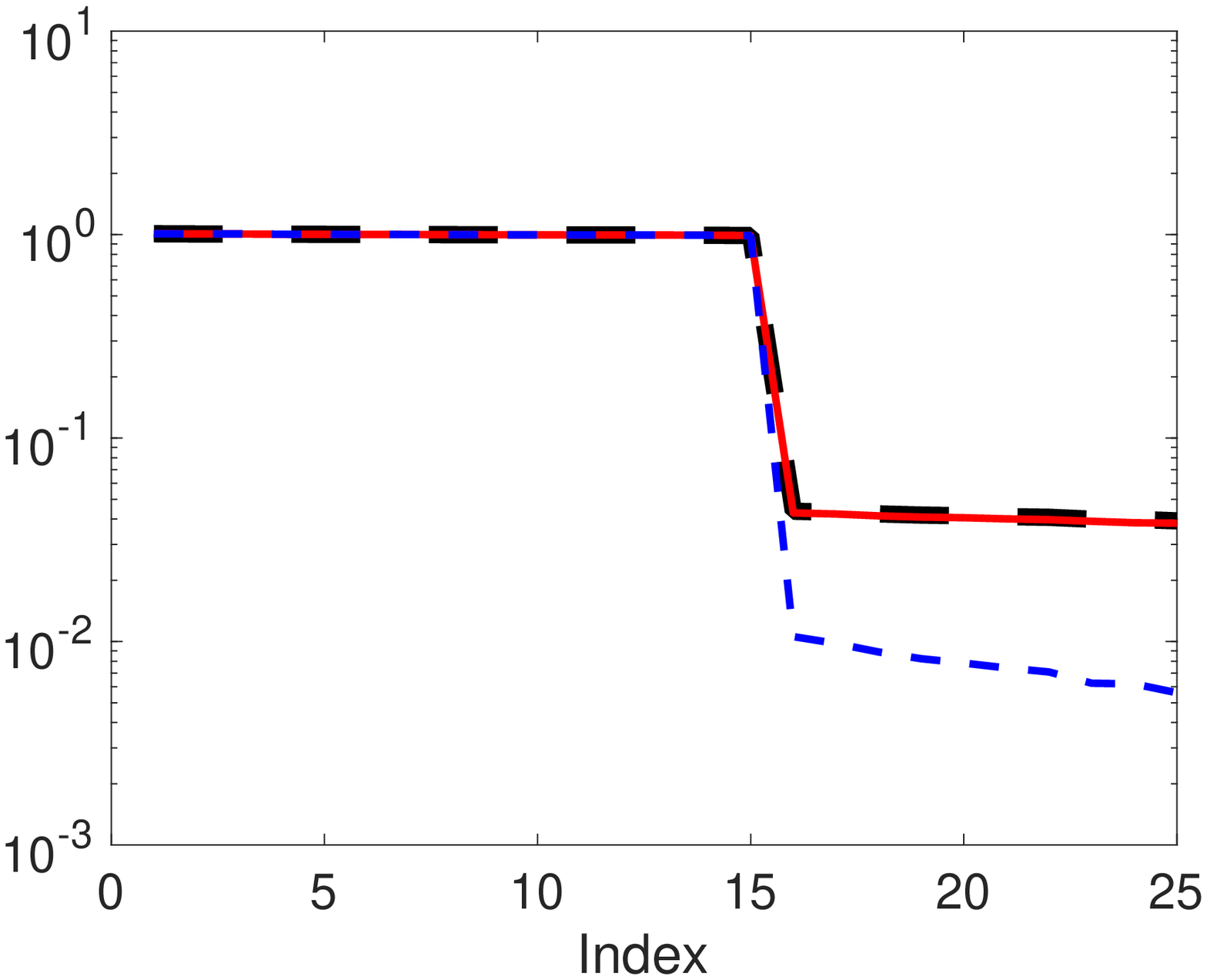}}%
\\
\subfigure[DecayMedium, $q=0$]{%
\label{fig:svsa3}%
\includegraphics[scale=0.24]{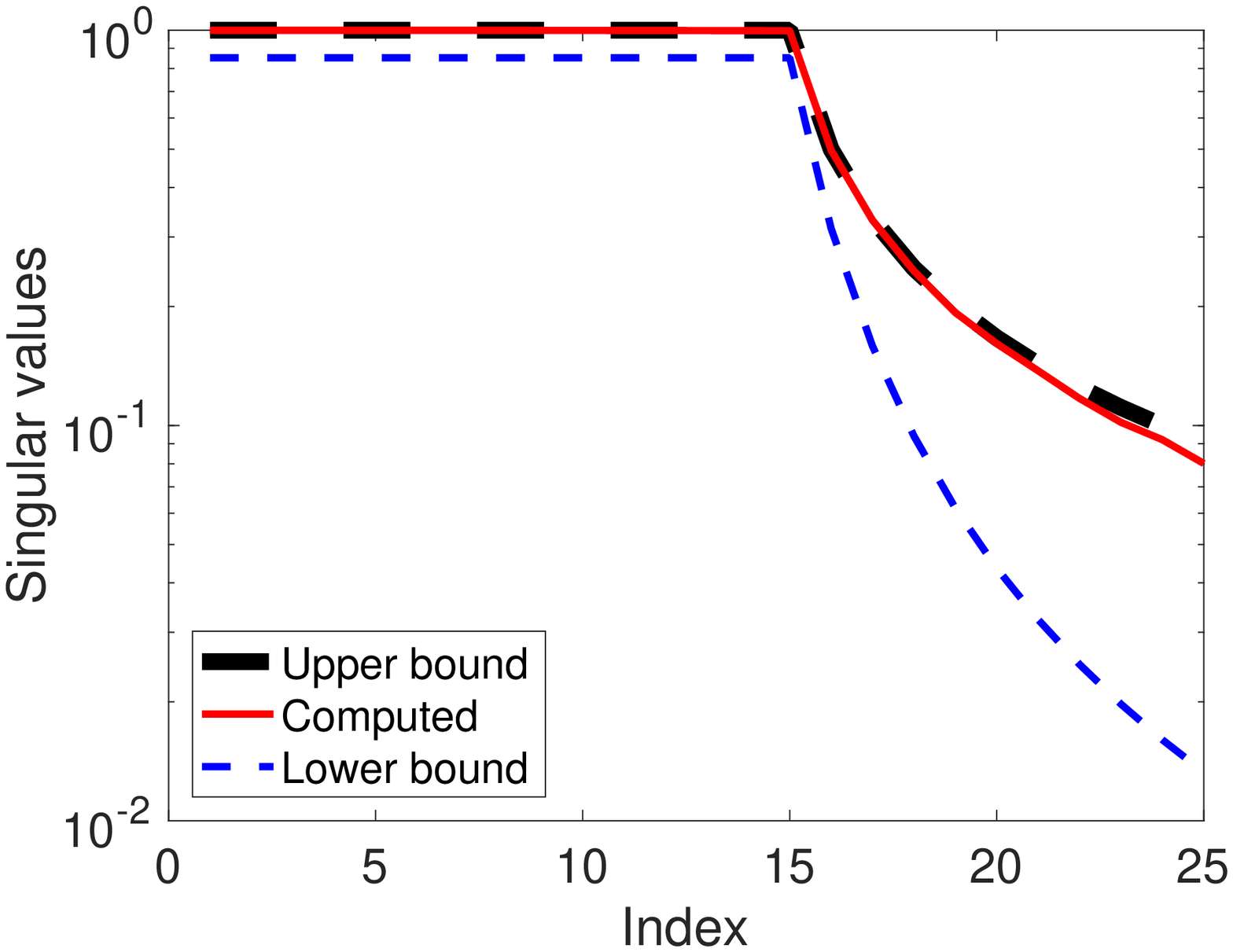}}%
\subfigure[DecayMedium, $q=1$]{%
\label{fig:svsb3}%
\includegraphics[scale=0.24]{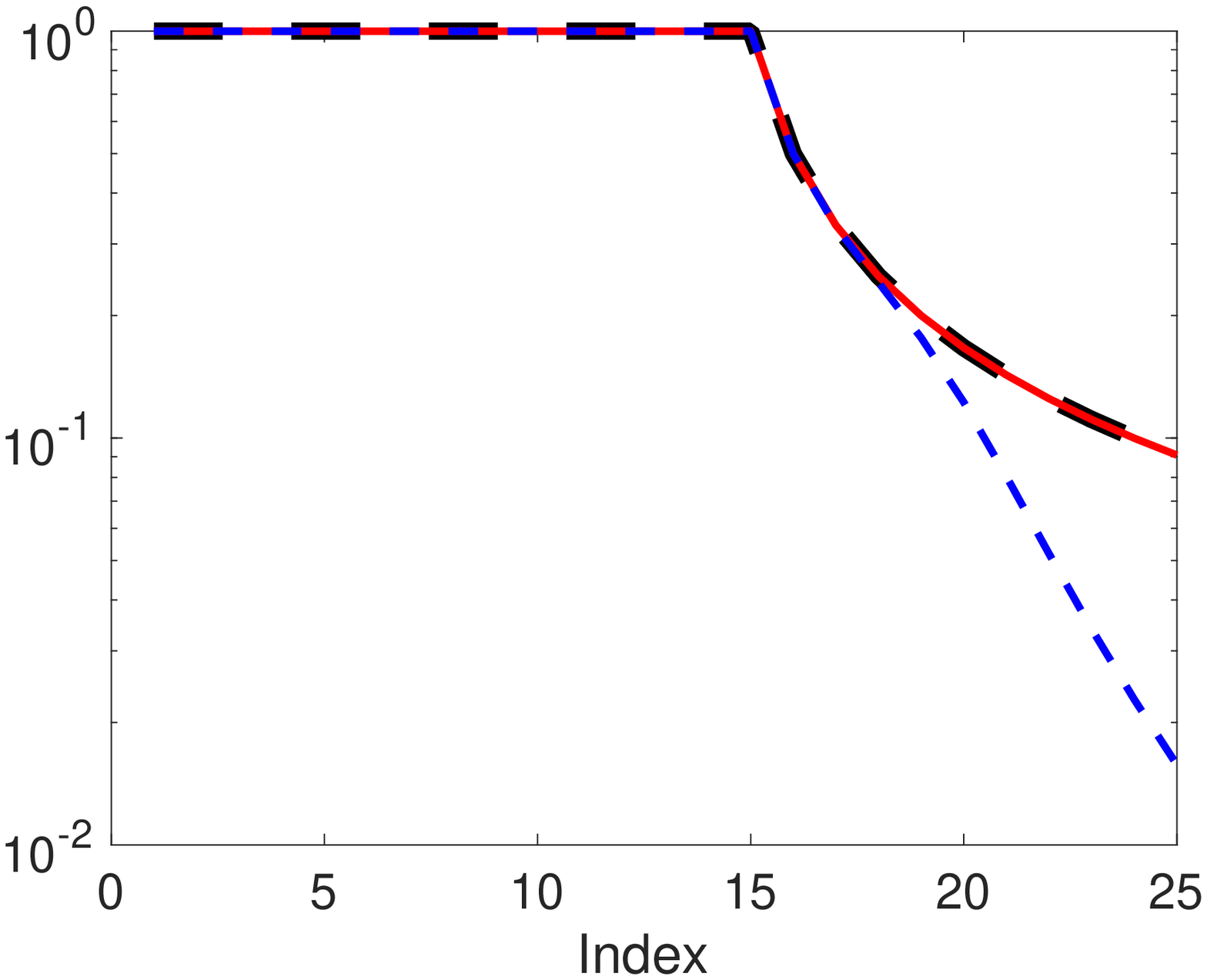}}%
\subfigure[DecayMedium, $q=2$]{%
\label{fig:svsc3}%
\includegraphics[scale=0.24]{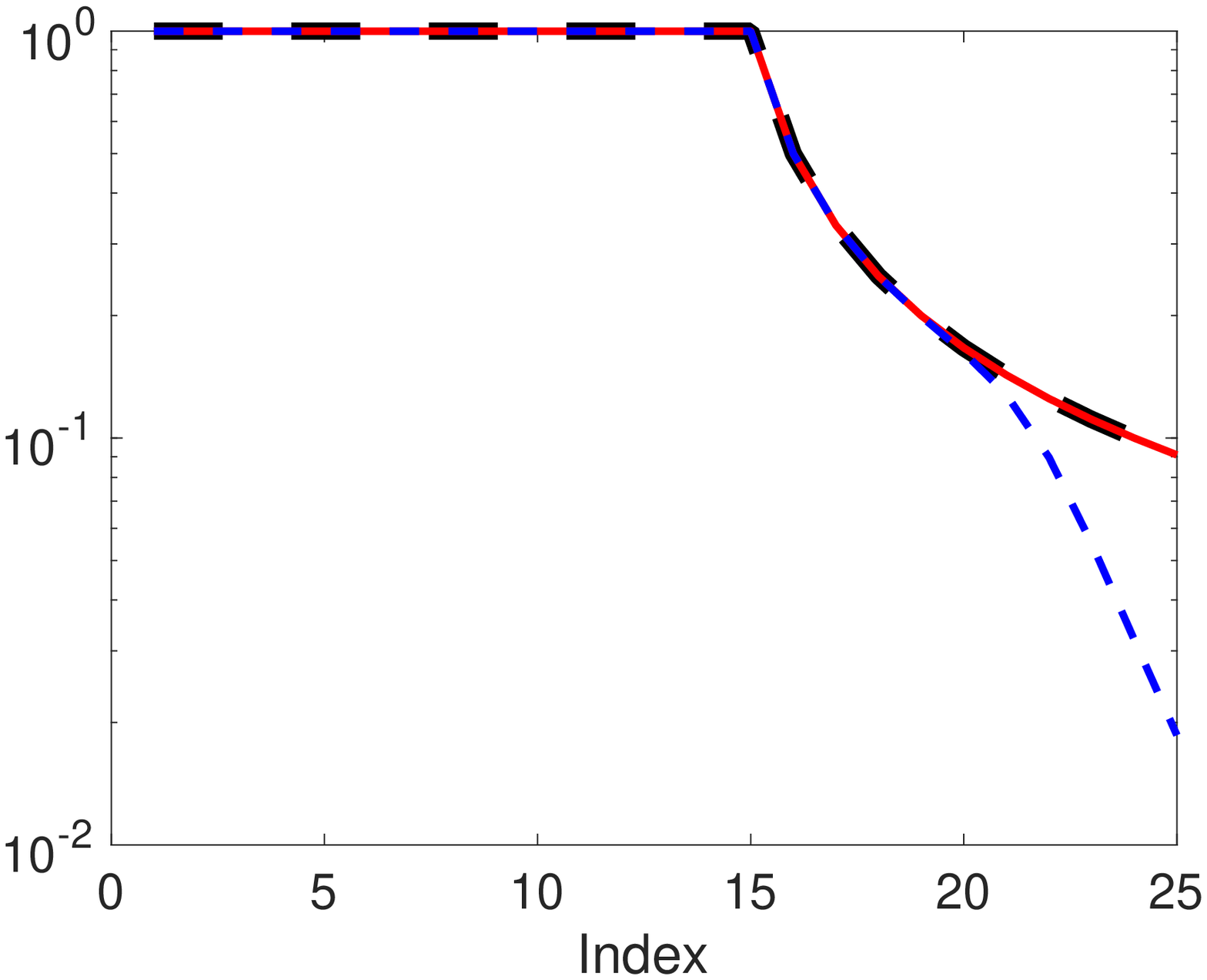}}%
\caption{Plots of the singular values. The test matrices were described in \cref{ssec:test}. The target rank $k=25$ and an oversampling parameter of $\rho=20$ was chosen for all the experiments. The solid lines black and blue lines correspond to the upper and lower bounds respectively, the dashed red lines correspond to bounds obtained using \cref{thm:sigma}. The parameter $q$ corresponds to the number of subspace iterations.}
\end{figure}

{

We now make observations specific to the test examples: 
\begin{description}
\item [1. GapSmall] In these examples, the large singular values are captured accurately. As the number of iterations increase, both the lower bound and the approximate singular values approach the true singular values (upper bound). For GapMedium and GapLarge, the bounds were much more accurate.
\item [2. NoiseMedium]  There is a qualitatively different behavior before and after indices $15-16$. The upper and lower bounds are tight before index $15$, but only the upper bound is tight after index $16$. The lower bound significantly under-predicts the singular values.
\item [3. DecayMedium] Similar to the previous example, the lower bounds are good before index $15$, and improve with number of iterations after index $15$.
\end{description}
 }

\section{Acknowledgments}
The author would like to thank Ilse C.F.\ Ipsen and Andreas Stathopoulos for helpful conversations. He would also like to acknowledge Ivy Huang for her help with the figures.

\bibliographystyle{abbrv}
\bibliography{references}
\end{document}